\documentclass[10pt,a4paper,leqno]{amsart}
\usepackage{amsmath}
\usepackage{amsthm}
\usepackage{amsfonts}
\usepackage{amssymb}
\usepackage{graphicx}
\usepackage{amsbsy}
\usepackage{mathrsfs}
\usepackage{esint}
\usepackage{bbm}
\usepackage{dsfont}
\usepackage{bm}
\usepackage{color}
\usepackage{mathabx}
\usepackage{mathtools}
\usepackage{eucal}

\usepackage{epstopdf}

\usepackage{amsmath, amssymb, graphics, setspace}

\newcommand{\mathsym}[1]{{}}
\newcommand{\unicode}[1]{{}}

%
%\addtolength{\hoffset}{-0.6cm} \addtolength{\textwidth}{1.2cm}
%\addtolength{\voffset}{-0.5cm} \addtolength{\textheight}{1.0cm}
\addtolength{\hoffset}{-0.0cm} \addtolength{\textwidth}{0.6cm}
\addtolength{\voffset}{-0.0cm} \addtolength{\textheight}{0cm}

\hbadness=100000

\newcommand{\be}{\begin{equation}}
\newcommand{\ee}{\end{equation}}
\newcommand{\ba}{\begin{array}}
\newcommand{\ea}{\end{array}}
\newcommand{\bea}{\begin{eqnarray}}
\newcommand{\eea}{\end{eqnarray}}
\newcommand{\bee}{\begin{eqnarray*}}
\newcommand{\eee}{\end{eqnarray*}}

\newcommand{\VV}{\mathcal{V}}
\newcommand{\WW}{\mathcal{W}}
\newcommand{\MM}{\mathcal{M}}
\newcommand{\FF}{\mathcal{F}}
\newcommand{\RR}{\mathcal{R}}
\newcommand{\NN}{\mathcal{N}}

\renewcommand{\AA}{\mathcal{A}}
\newcommand{\BB}{\mathcal{B}}
\newcommand{\LL}{\mathcal{L}}

\newtheorem{theorem}{Theorem}

\newtheorem{lemma}{Lemma}[section]
\newtheorem{prop}{Proposition}[section]

\newtheorem*{remark*}{Remark}
\newtheorem*{remarks*}{Remarks}

\numberwithin{equation}{section}

\catcode`@=11
\def\section{\@startsection{section}{1}%
  \z@{1.5\linespacing\@plus\linespacing}{.5\linespacing}%
  {\normalfont\bfseries\large\centering}}
\catcode`@=12

\newcommand{\R}{\mathbb{R}}

\newcommand{\C}{\mathbb{C}}
\newcommand{\N}{\mathbb{N}}
\newcommand{\DD}{\Delta}
\newcommand{\pt}{\partial}
\renewcommand{\leq}{\leqslant}
\renewcommand{\geq}{\geqslant}

\newcommand{\eps}{\varepsilon}

\newcommand{\cO}{\mathcal O}

\renewcommand{\epsilon}{\varepsilon}

\renewcommand{\phi}{\varphi}

\begin{document}

\title{Blowup for Biharmonic NLS}

\author[T. Boulenger]{Thomas Boulenger}
\address{University of Basel, Department of Mathematics and Computer Science, Spiegelgasse 1, CH-4051 Basel, Switzerland.}
\email{thomas.boulenger@unibas.ch}

\author[E. Lenzmann]{Enno Lenzmann}
\address{University of Basel, Department of Mathematics and Computer Science, Spiegelgasse 1, CH-4051 Basel, Switzerland.}
\email{enno.lenzmann@unibas.ch}

\begin{abstract}
We consider the Cauchy problem for the biharmonic (i.\,e.~fourth-order) NLS with focusing nonlinearity given by
$$
i \partial_t u = \Delta^2 u - \mu \Delta u -|u|^{2 \sigma} u  \quad \mbox{for $(t,x) \in [0,T) \times \R^d$},
$$
where $0 < \sigma <\infty$ for $d \leq 4$ and $0 < \sigma \leq 4/(d-4)$ for $d \geq 5$; and $\mu \in \R$ is some parameter to include a possible lower-order dispersion. In the mass-supercritical case $\sigma > 4/d$, we prove a general result on finite-time blowup for radial data in $H^2(\R^d)$ in any dimension $d \geq 2$. Moreover, we derive a universal upper bound for the blowup rate for suitable $4/d < \sigma < 4/(d-4)$. In the mass-critical case $\sigma=4/d$, we prove a general blowup result in finite or infinite time for radial data in $H^2(\R^d)$. As a key ingredient, we utilize the time evolution of a nonnegative quantity, which we call the (localized) Riesz bivariance for biharmonic NLS. This construction provides us with a suitable substitute for the variance used for classical NLS problems. 

In addition, we prove a radial symmetry result for ground states for the biharmonic NLS, which may be of some value for the related elliptic problem.
\end{abstract}

%Our findings appear to be the first rigorous results on upper bounds and existence of blowup solutions for biharmonic NLS with power-type nonlinearity.

\maketitle

\section{Introduction and Main Results}

In this paper, we consider the Cauchy problem for the biharmonic (i.\,e.~fourth-order) NLS with focusing power-type nonlinearity given by
\be \label{eq:bNLS}
\left\{\begin{aligned}
   & i \partial_t u = \DD^2 u - \mu \DD u - |u|^{2\sigma} u, \\
	 & u(0,x) = u_0(x) \in H^2(\R^d), \quad u : [0,T) \times \R^d \to \C,
\end{aligned}\right.
\ee
where  $0 < \sigma < \infty$ for $d  \leq 4$ and $0 < \sigma \leq \frac{4}{d-4}$ for $d \geq 5$. Here the parameter $\mu \in \R$ allows us to include a possible lower-order dispersion of classical NLS type. 

The biharmonic NLS provides a canonical model for nonlinear Hamiltonian PDEs with dispersion of super-quadratic order. Historically, the study of  biharmonic NLS goes back to Karpman and Karpman-Shagalov \cite{Ka-96,KaSh-00} in the physics literature, followed by the work of Fibich-Ilan-Papanicolaou \cite{FiIlPa-02}, where the rigorous analysis of these models was initiated. In recent years, a considerable amount of work has been devoted to the study of \eqref{eq:bNLS}. For instance, we refer to the works by Ben-Artzi-Koch-Saut \cite{BeKoSa-00} and Pausader \cite{Pa-07,Pa-09,Pa-09b} on well-posedness and scattering for biharmonic NLS; see also \cite{PaSh-10, MiXuZh-09, PaXi-13}.

Despite the fact that problem \eqref{eq:bNLS} bears a lot of resemblance to the classical NLS, several key questions have been out of scope by rigorous analysis up to now. Here, as a chief open problem addressed in this paper, we mention the existence of blowup solutions for problem \eqref{eq:bNLS}, which has been strongly supported by a series of numerical studies done by Fibich and coworkers \cite{BaFiMa-10, BaFiMa-10b, BaFi-11} for mass-critical and mass-supercritical powers $\sigma \geq 4/d$. In the present paper, we shall give an affirmative answer to the existence of blowup solutions for radial data in $H^2(\R^d)$ satisfying criteria that appear natural from known results on blowup for NLS and nonlinear wave equations (NLW). As another main result, we also derive a universal upper bound on the blowup rate in the mass-supercritical case for suitable exponents $\sigma > 4/d$.

%Again, the However, the proof for \eqref{eq:bNLS} turns out to be of  

Before we turn to the statement of the main results, let us  mention some general features of the evolution problem considered in this paper. Similar to the classical NLS, equation (\ref{eq:bNLS}) can be viewed as an infinite-dimensional Hamiltonian system, which enjoys the conservation of mass $M[u]$ and energy $E[u]$ that are given by
\be
    M[u] = \int_{\R^d} |u|^2\,dx , \\
\ee
\be
  E[u] = \frac12 \int_{\R^d} |\Delta u|^2\,dx + \frac{\mu}{2} \int_{\R^d} |\nabla u|^2\,dx - \frac{1}{2\sigma + 2} \int_{\R^d} |u|^{2\sigma + 2}\,dx.
\ee
Let us emphasize the fact that \eqref{eq:bNLS} does not possess any Galilean or Lorentz symmetry in contrast to classical NLS or NLW, respectively. With regard to classification of the criticality level for problem \eqref{eq:bNLS}, let us  define the number
\be
s_c := \frac{d}{2} - \frac{2}{\sigma}.
\ee
If we suppose for the moment that $\mu=0$ holds in \eqref{eq:bNLS}, we have the exact scaling invariance so that $u(t,x)$ can be mapped to another solution given by
\be
u_\lambda(t,x) = \lambda^{\frac{d}{2}-s_c} u(\lambda^4 t, \lambda x) \quad \mbox{with $\lambda > 0$}.
\ee
This rescaling preserves the homogeneous $\dot{H}^{s_c}$-norm of the original solution $u(t)$. Note that $s_c=2$ corresponds to the endpoint case $\sigma = \frac{4}{4-d}$ in \eqref{eq:bNLS} for dimensions $d \geq 5$. In view of the conservation laws above, we refer to the cases $s_c < 0$, $s_c=0$, and $s_c>0$ as {\em mass-subcritical, mass-critical,} and {\em mass-supercritical,} respectively. The endpoint case $s_c=2$ is {\em energy-critical}. Note that the cases $s_c=0$ and $s_c=2$ correspond to the exponents $\sigma=4/d$ and $\sigma=4/(d-4)$ in problem \eqref{eq:bNLS}, respectively.

From \cite{Pa-07} we recall the local well-posedness of the Cauchy problem \eqref{eq:bNLS} holds for $s_c \leq 2$. Furthermore, if $s_c < 2$, we have the following blowup alternative: Either the solution $u\in C^0([0,T); H^2(\R^d))$ of \eqref{eq:bNLS} extends to all times $t \geq 0$, or we have that
$$
\lim_{t \uparrow T} \| \DD u(t) \|_{L^2} = +\infty
$$
 for some finite time $0 < T < +\infty$. In the energy-critical case $s_c=2$, we have a blowup alternative that involves a critical Strichartz norm in space-time; see Theorem \ref{thm:energy} below for more details. 
 
Finally, we mention that, in the mass-subcritical case $s_c < 0$, the conservation laws for $M[u]$ and $E[u]$ together with an interpolation estimate (see \eqref{ineq:GNS} below) imply that all solutions $u(t)$ of problem \eqref{eq:bNLS} extend to all times, and thus blowup cannot occur in the mass-subcritical case $s_c < 0$ in analogy to well-posedness theory for classical NLS.  The present paper will show that, for $s_c \geq 0$, we do have blowup for biharmonic NLS for radial solutions in $H^2$ that satisfy suitable criteria.

\subsection{Blowup for Mass-Supercritical Case}
First, we discuss the case of mass-supercritical powers in \eqref{eq:bNLS} below the energy-critical level, i.\,e., we suppose that
$$
0 < s_c < 2.
$$
In view of the conservation laws for mass and energy, we recall the Gagliardo--Nirenberg (GN) interpolation inequality
\be \label{ineq:GNS}
\| u \|_{L^{2 \sigma+2}}^{2 \sigma+2} \leq C_{d, \sigma} \| \DD u \|_{L^2}^{\frac{\sigma d}{2}}  \| u \|_{L^2}^{2- \frac{\sigma}{2}(d-4)}
\ee
valid for all $u \in H^2(\R^d)$ and where $C_{d, \sigma} > 0$ denotes the optimal constant; we refer to Appendix \ref{sec:Q} for more details. It is known that \eqref{ineq:GNS} has  optimizers $Q \in H^{2}(\R^d)$, which we refer to as {\em ground states} throughout the following. By rescaling, we can assume that any such ground state $Q \in H^2(\R^d)$ solves the nonlinear elliptic equation
\be \label{eq:Q}
\DD^2 Q  + Q -|Q|^{2 \sigma} Q = 0 \quad \mbox{in $\R^d$}.
\ee
We remark that uniqueness of $Q$ (modulo translation and phase) is not known. In fact, to the best of our knowledge, it is has not even been known whether $Q$ can be chosen radially symmetric, since classical methods (e.\,g.,~moving planes or rearrangement techniques in $x \in \R^d$) are not applicable for equation \eqref{eq:Q} due to the presence of the biharmonic operator $\DD^2$. But if we assume that $\sigma \in \N$ holds, we show that $Q$ can always be chosen to be radially symmetric and real-valued, by using rearrangement techniques in Fourier space; see Appendix \ref{sec:Q} for more details. Actually, we will not make use of this fact shown here. But this symmetry result for ground states $Q$ seems to be new and it is perhaps of some independent value.
 
Our first main result gives sufficient criteria for finite-time blowup for \eqref{eq:bNLS} in the class of radial initial data.

\begin{theorem}[Blowup for Mass-Supercritical Case] \label{thm:blowup} Let $d \geq 2$, $\mu \in \R$, and $0 < s_c < 2$ with $\sigma \leq 4$. Suppose that $u_0 \in H^2(\R^d)$ is radial and satisfies one of the following conditions.
\begin{enumerate}
\item[(i)] If $\mu \neq 0$, we assume that 
$$
E[u_0] <  \begin{dcases*} 0 & for $\mu > 0$, \\  -\varkappa \mu^2 M[u_0]  & for $\mu < 0$, \end{dcases*}
$$
with some constant $\varkappa=\varkappa(d,\sigma)  > 0$.
\item[(ii)] If $\mu=0$, we assume that either $E[u_0] < 0$ or, if $E[u_0] \geq 0$, we suppose that
$$
E[u_0]^{s_c} M[u_0]^{2-s_c} < E[Q]^{s_c} M[Q]^{2-s_c},
$$
and
$$  
\| \DD u_0 \|_{L^2}^{s_c} \| u_0 \|_{L^2}^{2-s_c} > \| \DD Q \|_{L^2}^{s_c} \| Q \|_{L^2}^{2-s_c} .
$$
\end{enumerate}  
Then the solution $u \in C([0,T); H^2(\R^d))$ of \eqref{eq:bNLS} blows up in finite time, i.\,e., we have $0 < T < +\infty$ and $\lim_{t \uparrow T} \| \DD u(t) \|_{L^2} = +\infty$.
\end{theorem}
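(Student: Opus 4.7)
The plan is to adapt Glassey's virial (second-moment) argument from classical NLS. The ordinary variance $\int |x|^2 |u|^2\,dx$ is unsuitable here: radial $H^2$ data need not have finite second moment, and the commutator of $\Delta^2$ with $|x|^2$ is not sign-definite. I would therefore work with a \emph{localized} first-moment quantity built from a radial cutoff $\phi_R(x)$ satisfying $\phi_R(x) = |x|^2$ for $|x| \leq R$, $\phi_R$ bounded for $|x| \geq 2R$, and $\|\partial^\alpha \phi_R\|_{L^\infty} \lesssim R^{2-|\alpha|}$ for $|\alpha| \leq 4$. Set
$$
\mathcal{M}_R(t) = 2 \operatorname{Im} \int_{\R^d} \bar u \, \nabla \phi_R \cdot \nabla u \, dx ;
$$
together with suitable second-order corrections, this is the localized Riesz bivariance announced in the abstract. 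Differentiating in $t$ using \eqref{eq:bNLS} and integrating by parts, one derives a virial identity of the schematic form
$$
\frac{d}{dt}\mathcal{M}_R(t) = 8\int |\Delta u|^2 + 4\mu\int |\nabla u|^2 - \frac{4\sigma d}{\sigma+1}\int |u|^{2\sigma+2} + \mathcal{E}_R(t),
$$
where $\mathcal{E}_R(t)$ gathers all contributions supported in $\{|x| \geq R\}$ coming from $\phi_R - |x|^2$ and from the higher derivatives of $\phi_R$.

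Inserting the conservation of $E[u]$ to eliminate the nonlinear term yields
$$
\frac{d}{dt}\mathcal{M}_R(t) \leq 8\sigma d \cdot E[u_0] - 4(\sigma d - 2)\|\Delta u\|_{L^2}^2 - 4\mu(\sigma d - 1)\|\nabla u\|_{L^2}^2 + \mathcal{E}_R(t),
$$
where, since $\sigma d > 4$, the coefficient in front of $\|\Delta u\|_{L^2}^2$ is strictly negative. The error $\mathcal{E}_R(t)$ combines quadratic expressions in $u, \nabla u, \Delta u$ weighted by derivatives of $\phi_R$ together with a nonlinear tail $\int_{|x| \geq R}|u|^{2\sigma+2}$. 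Radiality enters here: by the Strauss embedding $|u(r)| \lesssim r^{-(d-1)/2}\|u\|_{H^1}$, interpolated with $\|u\|_{H^2}$ and combined with mass conservation, one controls the nonlinear tail by $o_R(1)\cdot(1 + \|\Delta u(t)\|_{L^2}^2)$ precisely when $\sigma \leq 4$. Choosing $R$ sufficiently large, these error terms are absorbed into the main part with arbitrarily small loss.

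The remaining task is to force the right-hand side uniformly strictly negative on $[0,T)$. In case (i) with $\mu > 0$, the gradient term carries the friendly sign and $E[u_0] < 0$ suffices; in case (i) with $\mu < 0$, one absorbs $|\mu|\|\nabla u\|_{L^2}^2 \leq |\mu|\|u\|_{L^2}\|\Delta u\|_{L^2}$ by Young's inequality, and the resulting trade-off determines the explicit threshold $-\varkappa \mu^2 M[u_0]$ in the hypothesis. For case (ii) with $\mu = 0$ and $E[u_0] \geq 0$, the sharp Gagliardo--Nirenberg inequality \eqref{ineq:GNS} together with the Pohozaev identities for $Q$ imply that the two scale-invariant threshold inequalities on $u_0$ propagate in time --- a standard bootstrap argument using only conservation of $M[u]$ and $E[u]$ --- so that $\|\Delta u(t)\|_{L^2}^{s_c}\|u(t)\|_{L^2}^{2-s_c}$ stays strictly above $\|\Delta Q\|_{L^2}^{s_c}\|Q\|_{L^2}^{2-s_c}$ for all $t \in [0,T)$. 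This lower bound on $\|\Delta u(t)\|_{L^2}$ dominates $8\sigma d \cdot E[u_0]$ and drives the right-hand side below some strictly negative constant $-c$.

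Once $\frac{d}{dt}\mathcal{M}_R(t) \leq -c < 0$ uniformly on $[0,T)$, blowup follows by contradiction with the a priori bound $|\mathcal{M}_R(t)| \leq C R \|u_0\|_{L^2}\|\nabla u(t)\|_{L^2}$ together with the blowup alternative: a global solution would force $\mathcal{M}_R(t) \to -\infty$ linearly in $t$, hence $\|\Delta u(t)\|_{L^2}\to\infty$ at positive rate, impossible on $[0,\infty)$. The principal obstacle I anticipate is the design of $\phi_R$ and the careful bookkeeping of the biharmonic virial: the identity now involves $\nabla^4 \phi_R$, so $\mathcal{E}_R$ is genuinely more delicate than in the classical NLS case, and the smallness in $R$ and the strict sign of the principal coefficient must be obtained simultaneously. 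This is exactly what the localized Riesz bivariance is engineered to deliver.
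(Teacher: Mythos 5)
Your setup coincides with the paper's: the localized virial $\mathcal{M}_{\phi_R}[u]=2\,\mathrm{Im}\int\bar u\,\nabla\phi_R\cdot\nabla u$, the Strauss-based control of the exterior error terms (whence the conditions $\sigma\le 4$, $d\ge 2$ and radiality), and the three-way case analysis --- $\mu>0$ with $E[u_0]<0$; $\mu<0$ with Young's inequality producing the $-\varkappa\mu^2M[u_0]$ threshold; and the sharp Gagliardo--Nirenberg plus Pohozaev bootstrap propagating $\|\Delta u(t)\|_{L^2}>\|\Delta Q\|_{L^2}^{\,1}y_{\max}$-type lower bounds for $\mu=0$, $E[u_0]\ge0$ --- all match the paper. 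Two smaller issues: the quantity you use is the localized \emph{virial}, not the Riesz bivariance (the latter is $\mathcal{V}_{\psi_R}[u]=\||\nabla|^{-1}(\nabla\psi_R\,u)\|_{L^2}^2$ and is needed only for the blowup-rate and mass-critical theorems); and your coefficient $-4(\sigma d-2)$ in front of $\|\Delta u\|_{L^2}^2$ should vanish exactly at the mass-critical exponent $\sigma d=4$ (with your normalization $\nabla\phi_R=2x$ on the ball it is $-(4\sigma d-16)$); as written it is negative already for $\sigma d>2$, which cannot be correct since negative energy alone does not yield finite-time blowup at $\sigma d=4$, $\mu=0$.

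The genuine gap is the concluding step. From $\frac{d}{dt}\mathcal{M}_R(t)\le -c<0$ you infer that a global solution forces $\mathcal{M}_R(t)\to-\infty$ linearly, hence $\|\nabla u(t)\|_{L^2}\gtrsim t$ and $\|\Delta u(t)\|_{L^2}\gtrsim t^2$, and you declare this ``impossible on $[0,\infty)$''. It is not: unbounded growth of $\|\Delta u(t)\|_{L^2}$ as $t\to+\infty$ is fully compatible with global existence (the blowup alternative constrains only finite-time behaviour), and this is precisely the infinite-time grow-up scenario the paper cannot exclude in the mass-critical case. To close the argument you must retain the coercive term: choose $R$ so that $\frac{d}{dt}\mathcal{M}_R(t)\le -\delta'\|\Delta u(t)\|_{L^2}^2$ on $[0,T)$, which your differential inequality delivers in all three cases once $4d\sigma E[u_0]$ is absorbed (using $E[u_0]<0$, or the uniform lower bound on $\|\Delta u(t)\|_{L^2}$ from the bootstrap). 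Assuming $T=+\infty$, one gets $\mathcal{M}_R(t)\le-\delta'\int_{t_1}^{t}\|\Delta u(s)\|_{L^2}^2\,ds\le 0$ for $t\ge t_1$, and the interpolated Cauchy--Schwarz bound $|\mathcal{M}_R(t)|\lesssim R\,\|u_0\|_{L^2}^{3/2}\|\Delta u(t)\|_{L^2}^{1/2}$ turns this into $\mathcal{M}_R(t)\le -A\int_{t_1}^{t}|\mathcal{M}_R(s)|^4\,ds$; the resulting superlinear ODE inequality forces $\mathcal{M}_R(t)\to-\infty$ at a finite time $t_*$, which is the contradiction. It is this nonlinear ODE comparison, not the linear decay of $\mathcal{M}_R$, that produces finite-time blowup.
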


\begin{remarks*}
{\em
1. The extra condition $\sigma \leq 4$ arises from the use of the Strauss inequality (i.\,e., a radial Sobolev inequality) in $\R^d$ with $d \geq 2$.  An analogous condition on the exponent $\sigma$ appears in the blowup proof of Ogawa and Tsutsumi \cite{OgTs-91} for classical NLS.

2. Note that if $\mu \geq 0$, the negative energy condition $E[u_0] < 0$ is sufficient. 

3. By time reversal symmetry, the equivalent blowup result holds for negative times.

4. For $0 < s_c < 2$ and initial data $u_0 \in H^2(\R^d)$ (which are not necessarily radial) with energy $E(u_0) \geq 0$ such that
$$ 
E[u_0]^{s_c} M[u_0]^{2-s_c} < E[Q]^{s_c} M[Q]^{2-s_c},
$$
and
$$
  \| \DD u_0 \|_{L^2}^{s_c} \| u_0 \|_{L^2}^{2-s_c} < \| \DD Q \|_{L^2}^{s_c} \| Q \|_{L^2}^{2-s_c},
  $$ 
  the corresponding solution $u \in C^0([0,\infty); H^2(\R^d))$ of \eqref{eq:bNLS} exits for all times $t \geq 0$ with an a-priori bound $\sup_{t \geq 0} \| \DD u(t) \|_{L^2} < +\infty$. This is a consequence of the conservation laws for mass and energy combined with the sharp version of the GN-inequality \eqref{ineq:GNS}. Note that quantities $E[Q]^{s_c} M[Q]^{2-s_c}$ and $\| \DD Q \|_{L^2}^{s_c} \| Q \|_{L^2}^{2-s_c}$ do not depend on the particular choice of a ground state $Q \in H^2(\R^d)$ solving \eqref{eq:Q} thanks to Pohozaev identities; see Appendix \ref{sec:Q}.

5. For $\sigma \in \N$, we show that ground states $Q=Q(|x|)$ can be chosen radial; see Appendix \ref{sec:Q}. In this case and with $\mu=0$ in \eqref{eq:bNLS}, we conclude that {\em solitary waves} $u(t,x) = e^{it} Q(x)$ are unstable due to nearby finite-time blowup solutions. Indeed, it is straightforward to check that radial initial data $u_0(|x|)= \lambda Q(|x|)$ with $\lambda > 1$  satisfy the assumptions of Theorem \ref{thm:blowup}. On the other hand, we deduce global-in-time existence for $u_0(|x|) = \lambda Q(|x|)$ when $\lambda < 1$ by the remark made above. Thus, in this case, the blowup conditions for radial $u_0 \in H^2(\R^d)$ are sharp.

6.~Similar blowup conditions for classical NLS involving products of suitable powers of $E[Q]$ and $M[Q]$ were derived in \cite{DuHoRo-08, HoRo-08}.

7.~In \cite{BaFi-11}, the authors investigate (by means of asymptotic analysis) self-similar blowup solutions for mass-supercritical biharmonic NLS. Assuming a conjecture to hold for the solvability of a certain nonlinear ODE for a self-similar blowup profile $S_B$, the results in \cite{BaFi-11} yield the existence of singular solutions $u_s(t,x)$ for \eqref{eq:bNLS} when $\mu=0$ and $\sigma > 4/d$; these proposed explicit singular solutions $u_s(t) \not \in L^2(\R^d)$ become singular in finite time in the space $L^{2 \sigma+2}(\R^d)$. It is an interesting open question to rigorously prove the existence of $S_B$ and to understand how a suitably perturbed profile of $S_B$ may lead to explicit finite-time blowup solutions in energy space.
}
\end{remarks*}

The next main result establishes a universal bound on the blowup rate in the class of radial data. The precise statement is as follows. 

\begin{theorem}[Universal Upper Bound on Blowup Rate] \label{thm:rate}
Suppose $d \geq 3$, $\mu \in \R$, and $0 < s_c < 2$ with $\sigma < \min \left \{ \frac{3}{d}+\frac 1 2,  \frac{6}{d} \right \}$. Let $u_0 \in H^2(\R^d)$ be radial and assume that the corresponding solution $u \in C([0,T); H^2(\R^d))$ of \eqref{eq:bNLS} blows up in finite time $0 < T < +\infty$. Then, for any time $t \in [0,T)$, we have the bound
$$
\int_{t}^T (T-\tau) \| \Delta u(\tau) \|_{L^2}^2 \, d \tau \leq C \left (T-t \right )^{\frac{2 \beta}{1+\beta}} 
$$
with some constants $C=C(u_0,d , \sigma)>0$ and $\beta=\beta(d, \sigma) > \alpha$, where
$$
 \alpha = \frac{4 - \sigma}{\sigma (d-1)}.
$$
Moreover, it holds that $\beta = \alpha + \cO(s_c)  \to \alpha$ as $s_c \to 0$.
\end{theorem}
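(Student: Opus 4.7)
The plan is to derive a localized fourth-order virial inequality for a suitable truncated gradient multiplier, and then optimize the cutoff radius against the blowup time. I would first fix a smooth radial cutoff $\phi_R \in C^4(\R^d)$ that agrees with $|x|^2/2$ on $\{|x|\leq R\}$, is bounded on $\{|x|\geq R\}$, and satisfies $\|\nabla^k\phi_R\|_{L^\infty}\lesssim R^{2-k}$ for $k=1,\ldots,4$. The corresponding localized virial (the localized Riesz bivariance of the abstract, specialized to this multiplier) is
\[
\mathcal{M}_R(t) = 2\,\mathrm{Im}\int_{\R^d}\bar u(t,x)\,\nabla\phi_R(x)\cdot\nabla u(t,x)\,dx,
\]
so that $|\mathcal{M}_R(t)| \lesssim R\,M[u_0]^{1/2}\,\|\nabla u(t)\|_{L^2}$. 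Differentiating in $t$ via (\ref{eq:bNLS}) and integrating by parts should yield, up to $\mu$-dependent lower-order terms and a remainder $\mathcal{E}_R(u)$ supported on $\{|x|\geq R\}$, the identity
\[
\frac{d}{dt}\mathcal{M}_R(t) = a_1\|\Delta u(t)\|_{L^2}^2 - a_2\|u(t)\|_{L^{2\sigma+2}}^{2\sigma+2} + \mathcal{E}_R(u(t))
\]
with explicit positive constants $a_1,a_2=a_2(d,\sigma)$ whose ratio encodes the Pohozaev scaling of biharmonic NLS.

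Using energy conservation $E[u]\equiv E[u_0]$ to eliminate $\|u\|_{L^{2\sigma+2}}^{2\sigma+2}$ and exploiting that $s_c>0 \iff \sigma d > 4$, I would obtain the coercive bound
\[
\frac{d}{dt}\mathcal{M}_R(t) \leq -c_0\,\|\Delta u(t)\|_{L^2}^2 + C_0 + |\mathcal{E}_R(u(t))|,
\]
with $c_0=c_0(d,\sigma)>0$ and $C_0$ controlled by $|E[u_0]|, M[u_0], \mu$. The remainder $\mathcal{E}_R$ collects terms in which $\nabla^k\phi_R$ (with $k=2,3,4$) is paired against quadratic forms in $u,\nabla u,\Delta u$ and against $|u|^{2\sigma+2}$ on $\{|x|\geq R\}$. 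For radial $u\in H^2(\R^d)$, the Strauss inequality $\|u\|_{L^\infty(|x|\geq R)}\lesssim R^{-(d-1)/2}\,\|u\|_{L^2}^{3/4}\,\|\Delta u\|_{L^2}^{1/4}$, together with its analogue for $\nabla u$, should yield after careful bookkeeping
\[
|\mathcal{E}_R(u(t))| \leq C\,R^{-\gamma}\,\|\Delta u(t)\|_{L^2}^{2-\alpha}
\]
for some $\gamma=\gamma(d,\sigma)>0$ and the exponent $\alpha = (4-\sigma)/(\sigma(d-1))$ from the theorem. The two-branch hypothesis $\sigma < \min\{6/d,\,3/d+1/2\}$ is precisely what is needed to ensure $\alpha\in(0,1)$ and to dominate the subleading cross-terms (in particular those coming from $\nabla^3\phi_R$ coupled with $\nabla u\cdot\Delta u$) by the displayed principal term.

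The rate is then extracted by integrating twice. Integrating the coercive inequality on $[t,T')$, using the uniform bound $|\mathcal{M}_R(\tau)|\lesssim R\,M[u_0]^{3/4}\|\Delta u(\tau)\|_{L^2}^{1/2}$ (via the interpolation $\|\nabla u\|_{L^2}\leq\|u\|_{L^2}^{1/2}\|\Delta u\|_{L^2}^{1/2}$), then weighting by $(T-t)$ and integrating a second time, produces
\[
c_0\int_t^T (T-\tau)\|\Delta u(\tau)\|_{L^2}^2\,d\tau \leq C_1\,R(T-t) + C_2(T-t)^2 + C_3\,R^{-\gamma}\int_t^T(T-\tau)\|\Delta u(\tau)\|_{L^2}^{2-\alpha}\,d\tau.
\]
H\"older applied to the last integral with conjugate exponents $(2/(2-\alpha),\,2/\alpha)$ converts it into a sublinear power of the left-hand side times $(T-t)^{\alpha}$, which can be absorbed via Young's inequality. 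Optimizing $R$ as an appropriate negative power of $(T-t)$ to balance the surviving terms then yields the announced bound $(T-t)^{2\beta/(1+\beta)}$, where $\beta=\beta(d,\sigma)$ is determined by the absorption exponents and satisfies $\beta>\alpha$ with $\beta=\alpha+\mathcal{O}(s_c)$ as $s_c\to 0$.

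The principal technical obstacle I foresee is the careful bookkeeping of the error terms in the fourth-order virial: $\frac{d}{dt}\mathcal{M}_R$ produces roughly half a dozen distinct structures from $\nabla^2\phi_R,\nabla^3\phi_R,\nabla^4\phi_R$ and their cross-products with $u,\nabla u,\Delta u$, and each must be matched by the Strauss inequality sharply enough to produce the stated exponent $\alpha$ rather than something weaker. A secondary subtlety is producing $\beta=\alpha+\mathcal{O}(s_c)$ (and not merely some crude $\beta>0$): this forces one to keep the Pohozaev coefficients in the coercivity step and to verify that the H\"older absorption can be pushed arbitrarily close to the borderline as $s_c\to 0$, which is precisely where the second branch $\sigma < 3/d+1/2$ of the hypothesis enters in low dimensions.
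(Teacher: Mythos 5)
Your proposal misses the central new ingredient of the argument and, as a consequence, the step where you pass from one time integration to two does not close. You identify the quantity $2\,\mathrm{Im}\int\bar u\,\nabla\phi_R\cdot\nabla u$ with ``the localized Riesz bivariance'', but that quantity is the localized \emph{virial} $\MM_{\phi_R}[u]$, which is sign-indefinite. The Riesz bivariance is a different object, $\VV_{\psi_R}[u]=\langle u,\nabla\psi_R\cdot(-\DD)^{-1}\nabla\psi_R\,u\rangle=\|\,|\nabla|^{-1}(\nabla\psi_R\,u)\|_{L^2}^2$, built with a second cutoff $\psi_R$ related to $\phi_R$ by $\partial_r\psi_R=\sqrt{2\phi_R}$; it is \emph{nonnegative}, satisfies $\VV_{\psi_R}[u]\lesssim R^4\|u\|_{L^2}^2$ by a Hardy inequality, and obeys $\frac{d}{dt}\VV_{\psi_R}[u]=4\MM_{\phi_R}[u]+\NN_R[u]+\cO(1+|\mu|R^2)$, where $\NN_R[u]=-i\langle u,[|u|^{2\sigma},\partial_k\psi_R(-\DD)^{-1}\partial_k\psi_R]u\rangle$ is a nonlinear commutator with no analogue for classical NLS. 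The second integration in the Merle--Rapha\"el--Szeftel scheme closes precisely because $\int_{t_0}^{t_1}\MM_{\phi_R}[u(t)]\,dt=\tfrac14\bigl(\VV_{\psi_R}[u(t_1)]-\VV_{\psi_R}[u(t_0)]\bigr)-\tfrac14\int_{t_0}^{t_1}\NN_R[u(t)]\,dt+\cO(t_1-t_0)$, so one may discard $\VV_{\psi_R}[u(t_1)]\geq 0$ and pay only $R^4$ for $\VV_{\psi_R}[u(t_0)]$. In your scheme the boundary contribution of the double integration is written as $C_1R(T-t)$, which would require the uniform bound $|\MM_R(\tau)|\lesssim R$ up to the blowup time; the correct Cauchy--Schwarz bound is $|\MM_R(\tau)|\lesssim R\|\DD u(\tau)\|_{L^2}^{1/2}$, which diverges as $\tau\uparrow T$, so the resulting term $R\int_t^T\|\DD u(\tau)\|_{L^2}^{1/2}\,d\tau$ cannot be replaced by $R(T-t)$. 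Even if you repair this by H\"older in time and Young's inequality, the surviving term is of order $R^{4/3}(T-t)^{2/3}$ rather than $R^4$, which changes the optimization in $R$ and produces an exponent whose required asymptotics $\beta=\alpha+\cO(s_c)$ you never verify.

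A second, related symptom: you assert that the hypothesis $\sigma<\min\{3/d+1/2,\,6/d\}$ is needed to make $\alpha\in(0,1)$ and to dominate cross-terms involving $\nabla^3\phi_R$. In fact $\alpha<1$ already follows from $\sigma>4/d$, and the virial error terms are controlled by the Strauss inequality without any such restriction. That hypothesis is used exclusively to bound the commutator $\NN_R[u]$: via Newton's theorem and radial estimates one obtains $|\NN_R[u]|\lesssim R^a\|\DD u\|_{L^2}^{b}$ with $b=\tfrac12(d\sigma-4+a)$, and the condition on $\sigma$ is exactly what guarantees $b<1$, so that the time-integral of $\NN_R$ can be absorbed into the weighted quantity $\int(T-\tau)\|\DD u(\tau)\|_{L^2}^2\,d\tau$. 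Since your argument never produces $\NN_R$, it cannot account for where this hypothesis enters, which indicates that the essential structure of the proof has been missed. Note also that in this part of the paper one must choose $R$ \emph{small}, as a positive power of $T-t$, rather than large as in the blowup theorems.
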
 

\begin{remarks*}
{\em 1.~Our strategy to prove Theorem \ref{thm:rate} is inspired by the remarkable proof of Merle-Rapha\"el-Szeftel \cite{MeRaSz-14}, where a (sharp) universal upper bound for the blowup rate for mass-supercritical classical NLS is established. However, the proof in \cite{MeRaSz-14} makes use of the variance algebra for classical NLS, which is not at our disposal for biharmonic NLS and hence cannot be directly adapted to the present situation. To overcome this, we introduce a suitable nonnegative quantity $\VV_{\psi_R}[u]$, which we refer to as the (localized) {\em Riesz bivariance}; see below for more information on this.

2.~We need to impose the extra condition $\sigma < \min \{ \frac{3}{d} + \frac{1}{2}, \frac{6}{d} \}$ in order to control certain nonlinear interaction terms (which are not present at all for classical NLS). See below for more details on this. Note that this technical assumption on $\sigma$ is automatically satisfied when $d \geq 12$, since we have $\sigma < \frac{4}{d-4}$ in the energy-subcritical case. 

3.~In the proof of Theorem \ref{thm:rate} given below, we give an explicit formula for $\beta=\beta(d, \sigma)> \alpha$; see Section \ref{sec:blowup_rate} for more details.

4.~The numerical analysis in \cite{BaFiMa-10} suggests that the sharp upper bound is $\beta = \alpha$. It seems a challenging open problem to prove this observation by rigorous means.
}
\end{remarks*}

\subsection{Blowup for Mass-Critical Case}

We now consider the mass-critical case $s_c = 0$ in \eqref{eq:bNLS}, i.\,e., we assume that $\sigma = 4/d$ holds. We have the following result on finite- and infinite-time blowup for radial data.

\begin{theorem}[Blowup for Mass-Critical Case] \label{thm:mass}
Let $d \geq 2$, $\mu \geq 0$, and $s_c=0$.  Let $u_0 \in H^2(\R^d)$ be radial with $E(u_0) < 0$. Then the solution $u \in C^0([0;T); H^2(\R^d))$ of problem \eqref{eq:bNLS} satisfies the following.
\begin{enumerate}
\item[(i)] If $\mu > 0$, then $u(t)$ blows up in finite time.
\item[(ii)] If $\mu =0$, then $u(t)$ either blows up in finite time or $u(t)$ blows up in infinite time such that
$$
\| \DD u(t) \|_{L^2} \geq C t^2 \quad \mbox{for $t \geq t_0$},
$$
with some constants $C=C(u_0) > 0$ and $t_0=t_0(u_0) > 0$. Moreover, in the latter case and for dimensions $d \geq 5$, it holds that 
$$
\limsup_{t \to +\infty} \left ( t^{-\nu} \| \DD u(t) \|_{L^2} \right ) = +\infty,
$$
for any  $\nu < \nu_*$, where
$$
\nu_* = \begin{dcases*} +\infty & for $d \geq 8$, \\ 24 & for $d=7$, \end{dcases*} \qquad \nu_* = \begin{dcases*} 10 & for $d=6$, \\ 4 & for $d=5$. 
\end{dcases*}
$$
\end{enumerate}
\end{theorem}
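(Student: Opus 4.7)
The plan is to adapt the localized virial strategy based on the Riesz bivariance $\VV_{\psi_R}[u]$ developed earlier in the paper for the mass-supercritical case. Specializing the second-derivative computation of $\VV_{\psi_R}[u(t)]$ to the mass-critical exponent $\sigma = 4/d$ (so that $\sigma d = 4$), the nonlinear contributions should collapse into a clean multiple of the energy, yielding an inequality of the schematic form
\[
\frac{d^2}{dt^2}\VV_{\psi_R}[u(t)] \leq c_1\, E[u_0] - c_2\,\mu \int_{\R^d}|\nabla u|^2\,dx + \EE_R(u(t)),
\]
with positive universal constants $c_1, c_2$. Via the radial Strauss inequality, the cut-off error $\EE_R(u)$ should satisfy
\[
|\EE_R(u)| \leq C\bigl(R^{-a_1} + R^{-a_2}\,\|\DD u\|_{L^2}^{b}\bigr)
\]
for positive exponents $a_1, a_2, b$ depending on $d$, with $a_2$ increasing with $d$ due to the sharper radial decay in higher dimensions.

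For part (i) with $\mu > 0$: I would use the coercive term $-c_2\mu\|\nabla u\|_{L^2}^2$ to absorb the large part of $\EE_R$. Invoking the interpolation $\|\nabla u\|_{L^2}^2 \leq \|u\|_{L^2}\|\DD u\|_{L^2}$ together with mass conservation and a Young-type splitting, one can arrange, for $R$ sufficiently large, that
\[
\EE_R(u(t)) \leq c_2\mu \int_{\R^d}|\nabla u|^2\,dx + \tfrac{1}{2}|c_1 E[u_0]|.
\]
Plugging this back yields $\frac{d^2}{dt^2}\VV_{\psi_R}[u(t)] \leq \tfrac{1}{2}c_1 E[u_0] < 0$. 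Since $\VV_{\psi_R}[u(t)] \geq 0$ by construction, the maximal existence time $T$ must be finite.

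For part (ii) with $\mu = 0$: The coercive term is absent, so one only has $\VV_{\psi_R}''(t) \leq c_1 E[u_0] + \EE_R(u(t))$. Assuming $T = +\infty$, I argue by contradiction: if $\|\DD u(t)\|_{L^2}$ were to grow too slowly, a sufficiently large time-dependent choice $R = R(t)$ would force $\EE_R(u(t)) \leq \tfrac{1}{2}|c_1 E[u_0]|$, and integrating twice would drive $\VV_{\psi_R}(t)$ below zero. Optimizing the balance between $R(t)$ and $\|\DD u(t)\|_{L^2}$ in the error term yields the lower bound $\|\DD u(t)\|_{L^2} \geq C t^{2}$ for $t \geq t_0$. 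For $d \geq 5$, the stronger radial decay in the Strauss inequality improves $a_2$, and feeding the previously obtained growth rate back into the error estimate produces a geometrically improving sequence of lower bounds: the recursion converges to the finite fixed points $\nu_* = 4, 10, 24$ for $d = 5, 6, 7$ respectively, and diverges (so $\nu_* = +\infty$) once $d \geq 8$.

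The main obstacle will be the careful bookkeeping of exponents needed for sharpness. In case (i), the absorption of the $R$-dependent error by the coercive term $\mu\|\nabla u\|_{L^2}^2$ must be uniform in $R$, which hinges on the mass-critical scaling sitting exactly at the threshold for this interpolation to close. In case (ii), extracting precisely the values $\nu_* = 4, 10, 24$ and identifying $d = 8$ as the transition dimension beyond which the bootstrap diverges requires tracking the joint dependence of $a_2$ and $b$ on $d$ in the Strauss-based estimate for $\EE_R$. The conceptual heart of the argument remains, as in Theorem \ref{thm:blowup}, that $\VV_{\psi_R}$ is nonnegative and thus its forced sign-change drives the blowup.
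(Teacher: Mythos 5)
Your outline captures the right global mechanism (nonnegativity of the Riesz bivariance forcing a sign contradiction, with dimension-dependent exponents controlling the nonlinear commutator), but it routes everything through $\VV_{\psi_R}$ and in doing so misses the two points on which the paper's proof actually turns. First, for part (i) the paper does \emph{not} use the bivariance at all: it works with the first-order virial inequality from Lemma \ref{lem:MR}, where for $\sigma=4/d$ the coefficient of $\|\DD u\|_{L^2}^2$ vanishes but $-4\mu\|\nabla u\|_{L^2}^2$ survives; combined with $|\MM_{\phi_R}[u]|\lesssim \|u\|_{L^2}\|\nabla u\|_{L^2}$ this yields a Riccati-type inequality $\MM_R[u(t)]\leq -A\int_{t_1}^t|\MM_R[u(s)]|^2\,ds$ and hence finite-time blowup. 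Your second-derivative/concavity version of this step has two problems: (a) the differential identity for $\VV_{\psi_R}$ produces the commutator term $\NN_R[u]$, which is bounded by $\|\DD u\|_{L^2}^b$ and \emph{cannot} be absorbed into $\mu\|\nabla u\|_{L^2}^2$ --- the interpolation $\|\nabla u\|_{L^2}^2\leq\|u\|_{L^2}\|\DD u\|_{L^2}$ goes the wrong way, since a large $\|\DD u\|_{L^2}$ does not force a large $\|\nabla u\|_{L^2}$; and (b) the bivariance and the bounds on $\NN_R$ (Lemmas \ref{lem:VR} and \ref{lem:NR}) require $d\geq 3$, whereas the theorem covers $d=2$.

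Second, for part (ii) with $\mu=0$ your claim that a large (even time-dependent) choice of $R$ forces the cutoff error below $\tfrac12|c_1E[u_0]|$ fails: the error in Lemma \ref{lem:MR} contains $R^{-2}\|\nabla u(t)\|_{L^2}^2\lesssim R^{-2}\|u_0\|_{L^2}\|\DD u(t)\|_{L^2}$, which is \emph{linear} in $\|\DD u(t)\|_{L^2}$ and therefore, for any fixed $R$, eventually dwarfs $|E[u_0]|$ along a blowup solution; with no coercive $-\mu\|\nabla u\|^2$ or $-\delta\|\DD u\|^2$ term available at $s_c=0$, this term cannot be discarded. The paper's resolution is a genuinely new ingredient you do not supply: it re-opens the proof of Lemma \ref{lem:MR}, retains the good negative term $-8\int(1-\pt_r^2\phi_R)|\pt_r^2u|^2$, integrates the offending terms by parts so that they pair against $|\pt_r^2 u|^2$ on the support of $\nabla^3\phi_R$, and constructs a special cutoff (Appendix \ref{sec:cutoff}, inequality \eqref{ineq:trans}) making the absorption close; only then does $\frac{d}{dt}\MM_R[u(t)]\leq 8E[u_0]$ follow, giving $\|\DD u(t)\|_{L^2}\gtrsim t^2$ via $|\MM_R[u]|\lesssim\|\DD u\|_{L^2}^{1/2}$ (again for all $d\geq2$, without the bivariance). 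Finally, the $\limsup$ bounds for $d\geq5$ are not obtained by a bootstrap recursion converging to a fixed point: assuming $\|\DD u(t)\|_{L^2}\leq C(1+t)^{\nu}$, one integrates $\frac{d}{dt}\VV_R=4\MM_R+\NN_R+\cO(1)$ once to get $\VV_R[u(t)]\leq 16E[u_0]t^2+C(1+t)^{b\nu+1}+C(1+t)$, and the single condition $b\nu<1$ already forces $\VV_R<0$, a contradiction; the thresholds $\nu_*=1/b=4,10,24,\infty$ come directly from the exponents $b$ of Lemma \ref{lem:NR}, whose proof relies on Newton's theorem for $(-\DD)^{-1}$ on radial functions (not only on the Strauss inequality).
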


\begin{remarks*}
{\em
1. When $\mu > 0$, the proof is a slight modification of the proof of Theorem \ref{thm:blowup} and exploits the fact that the exponent $\sigma = \frac{4}{d}$ is ``mass-supercritical'' with respect to the lower-order NLS type dispersion $-\mu\DD$. On the other hand, we are presently not able to deal with the case $\mu < 0$.

2. For $\mu=0$ and $\sigma=4/d$, equation \eqref{eq:bNLS} becomes invariant under $L^2$-mass preserving rescaling. In this case, the analysis turns out to be much more delicate, and we are currently not able to conclude that radial negative energy solutions must blowup in finite time. The proof for the growth estimate utilizes the localized Riesz bivariance.

3. This blowup result for $\mu=0$ complements the analysis of Pausader and Shao \cite{PaSh-10}, where global-in-time well-posedness for radial initial data $u_0 \in L^2(\R^d)$ with $\| u_0 \|_{L^2} < \| Q \|_{L^2}$, which implies that $E[u_0] > 0$, was shown by implementing the Kenig--Merle methodology \cite{KeMe-06}. 

4. In view of well-known blowup results for negative energy data for focusing mass-critical NLS, it seems natural to conjecture that we always have finite-time blowup for $\mu=0$. 

5.~ Lower bounds on blowup rates (as dictated by local well-posedness), convergence properties to a blowup profile (given by $Q$), and $L^2$-mass concentration were shown in  \cite{BaFiMa-10} for finite-time blowup $H^2$-solutions for the $L^2$-critical biharmonic NLS \eqref{eq:bNLS} with $\mu=0$ and $\sigma=4/d$. These results are in direct analogy to known results for $L^2$-critical classical NLS. In particular, the proofs in \cite{BaFiMa-10} follow from an adaptation of arguments in \cite{MeTs-90, We-89} developed for $L^2$-critical NLS. 

6.~After finalizing this paper, we learned from the recent work by Cho et al.~\cite{ChOzWa-14}, where existence of finite-time blowup solutions for fourth-order $L^2$-critical NLS of the specific form $i \pt_t u = (\alpha \DD^2 - \DD ) u - |x|^{-2} |u|^{\frac{4}{d}} u$ with $\alpha > 0$ was shown for sufficiently high space dimensions $d$, by means of a (non-localized) virial/variance type argument. For local nonlinearities, the arguments used in \cite{ChOzWa-14} strongly exploit the fact that the nonlinearity is of the form $-\rho(x) |x|^{-2} |u|^{\frac{4}{d}} u$ with non-increasing radial $\rho(x)$.
}
\end{remarks*}

\subsection{Blowup for Energy-Critical Case}

As the final main result in this paper, we turn to the energy-critical case $s_c =2$, i.\,e., we assume that $d \geq 5$ holds and choose $\sigma = \frac{4}{4-d}$. For this endpoint case, we recall the homogeneous Sobolev inequality
\be \label{ineq:sob}
\| u \|_{L^{\frac{2d}{d-4}}} \leq C_d \| \DD u \|_{L^2} ,
\ee
valid for all $u \in \dot{H}^2(\R^d)$ and where $C_d > 0$ denotes the optimal constant. It is a classical result that inequality \eqref{ineq:sob} has an optimizer $W \in \dot{H}^2(\R^d)$ that is unique (up to scaling and translation). In particular, it is known that $W \in \dot{H}^2(\R^d)$ is radial, nonnegative and it solves the nonlinear elliptic equation
\be \label{eq:W}
\DD^2 W - |W|^{\frac{8}{d-4}} W = 0 \quad \mbox{in $\R^d$}.
\ee
In fact, we have the explicit formula 
\be
W(x) =\left (  \frac{(d(d-4)(d^2-4))^{\frac{1}{4}}}{1 +x^2} \right )^{\frac{d-4}{2}} .
\ee 
As an aside, we remark that $W \not \in L^2(\R^d)$ for $5 \leq d \leq 8$ due to its slow algebraic decay at infinity. The reason why ground states for \eqref{eq:W} are much better understood than for the elliptic problem \eqref{eq:Q} is due to the conformal invariance of equation \eqref{eq:W}. 
 
We have the following blowup result for the energy-critical case, which is a close variant of Theorem \ref{thm:blowup} above.

\begin{theorem}[Blowup for Energy-Critical Case] \label{thm:energy}
Let $d \geq 5$, $\mu \in \R$, and $s_c = 2$. Suppose that $u_0 \in H^2(\R^d)$ is radial and satisfies one of the following properties.
\begin{enumerate}
\item[(i)] If $\mu \neq 0$, we assume that 
$$
E[u_0] <  \begin{dcases*} 0 & for $\mu > 0$, \\   -\varkappa \mu^2 M[u_0] & for $\mu < 0$, \end{dcases*}
$$
with some constant $\varkappa=\varkappa(d) > 0$.
\item[(ii)] If $\mu=0$, we assume that either $E[u_0] < 0$ or, if $E[u_0] \geq 0$, we suppose that
$$
E[u_0] < E[W] \quad \mbox{and} \quad \| \DD u_0 \|_{L^2} > \| \DD W \|_{L^2}.
$$
\end{enumerate}
Then the solution $u \in C^0([0,T); H^2(\R^d))$ blows up in finite time, i.\,e., it holds that $0 < T < +\infty$ and
$$
\int_{0}^T \int_{\R^d} |u(t,x)|^{\frac{2(d+4)}{d-4}} \,d x \, dt = +\infty .
$$
\end{theorem}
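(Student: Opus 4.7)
The plan is to mirror the proof of Theorem \ref{thm:blowup}, with the Aubin--Talenti bubble $W$ playing the role of the ground state $Q$ and the sharp Sobolev embedding \eqref{ineq:sob} replacing the Gagliardo--Nirenberg inequality \eqref{ineq:GNS}. First, for the variational trapping required in case (ii) with $E[u_0]\geq 0$, I would use the Pohozaev identities $\|\DD W\|_{L^2}^2 = \|W\|_{L^{2d/(d-4)}}^{2d/(d-4)}$ and $E[W] = \tfrac{2}{d}\|\DD W\|_{L^2}^2$ to obtain $E[u]\geq f(\|\DD u\|_{L^2}^2)$, where $f(y) = \tfrac12 y - \tfrac{d-4}{2d} C_d^{2d/(d-4)} y^{d/(d-4)}$ attains its unique positive maximum $E[W]$ exactly at $y_\ast = \|\DD W\|_{L^2}^2$. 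Conservation of energy, continuity of $t\mapsto \|\DD u(t)\|_{L^2}$, and the strict initial inequalities in (ii) then yield a uniform gap $\|\DD u(t)\|_{L^2}^2 \geq \|\DD W\|_{L^2}^2 + \eta_0$ on $[0,T)$ for some $\eta_0 > 0$.

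Next, I would apply the localized Riesz bivariance $\VV_{\psi_R}[u]$ constructed earlier in the paper, with $\psi_R$ a radial cutoff that coincides with $|x|^2$ near the origin. Exploiting $\sigma = 4/(d-4)$, a Pohozaev-type manipulation of $\frac{d^2}{dt^2}\VV_{\psi_R}[u]$ reduces its principal part, modulo a cutoff remainder $\mathcal{R}_{\psi_R}(u)$, to a combination of $E[u_0]$, a negative multiple of $\|\DD u\|_{L^2}^2 - \|\DD W\|_{L^2}^2$ (through the trapping above when $E[u_0]\geq 0$), and $-\mu\|\nabla u\|_{L^2}^2$. As in the proof of Theorem \ref{thm:blowup}, the remainder is controlled via the radial Strauss embedding together with conservation of mass, producing a bound of the form $|\mathcal{R}_{\psi_R}(u)|\leq o_R(1)\,\|\DD u\|_{L^2}^2 + C(u_0)R^{-\alpha}$; note that no extra restriction on $\sigma$ is required because $\sigma = 4/(d-4)\leq 4$ for every $d\geq 5$.

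Combining these ingredients, I would derive $\frac{d^2}{dt^2}\VV_{\psi_R}[u] \leq -c < 0$ uniformly on $[0,T)$ for $R$ sufficiently large: the term $E[u_0]<0$ directly yields negativity when $\mu\geq 0$; for $\mu=0$ with $E[u_0]\geq 0$, the trapping gap $\eta_0$ provides it; and for $\mu<0$, I would absorb the unfavourable $-\mu\|\nabla u\|_{L^2}^2$ using the interpolation $\|\nabla u\|_{L^2}^2 \leq \varepsilon\|\DD u\|_{L^2}^2 + \varepsilon^{-1}\|u\|_{L^2}^2$ together with the hypothesis $E[u_0] < -\varkappa\mu^2 M[u_0]$, which fixes $\varkappa = \varkappa(d) > 0$. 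Since $\VV_{\psi_R}[u]\geq 0$, a Glassey-type concavity argument forces the maximal lifespan to be finite; the divergence of the Strichartz space-time norm $\int_0^T\int |u|^{2(d+4)/(d-4)}\,dx\,dt$ then follows from the energy-critical blowup alternative of Pausader \cite{Pa-09}. The main obstacle will be controlling $\mathcal{R}_{\psi_R}(u)$ at the critical Sobolev exponent, where nonlinear contributions cannot simply be interpolated against $\|u\|_{L^2}$; this forces a splitting leveraging the uniform $H^2$-bound on compact time intervals (coming from the variational trapping) together with the radial pointwise decay outside large balls.
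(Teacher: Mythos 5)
Your variational trapping step (Pohozaev identities for $W$, the function $f$, conservation of energy plus continuity giving $\| \DD u(t) \|_{L^2} > \| \DD W\|_{L^2}$ on $[0,T)$) and your treatment of the sign conditions in case (i) match the paper. But the blowup mechanism you propose — a Glassey-type concavity argument on the localized Riesz bivariance $\VV_{\psi_R}[u]$ — is not what the paper does, and it contains a genuine gap. The paper proves Theorem \ref{thm:energy} (like Theorem \ref{thm:blowup}) using \emph{only} the first-order localized virial $\MM_{\phi_R}[u]$: Lemma \ref{lem:MR} yields $\frac{d}{dt}\MM_{\phi_R}[u(t)] \leq -\tfrac{16\eta}{d-4}\| \DD u(t)\|_{L^2}^2$ after absorbing the cutoff errors, and then the Cauchy--Schwarz bound $|\MM_{\phi_R}[u]| \lesssim R \| u\|_{L^2}\|\nabla u\|_{L^2} \lesssim \| \DD u\|_{L^2}^{1/2}$ closes a nonlinear ODE inequality of the form $z' \geq A z^4$ that blows up in finite time. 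No bivariance and no second time derivative are needed.

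The gap in your route is the nonlinear commutator term. Lemma \ref{lem:VR} gives only a \emph{first}-derivative identity $\frac{d}{dt}\VV_{\psi_R}[u] = 4\MM_{\phi_R}[u] + \NN_R[u] + \cO(1+|\mu|R^2)$, where the $\cO$-remainders are merely bounded, not differentiable in a controlled way, so you cannot literally form $\frac{d^2}{dt^2}\VV_{\psi_R}[u]$; you would have to integrate twice, as in the proof of Theorem \ref{thm:rate}. Doing so forces you to control $\int_0^t \NN_R[u(s)]\,ds$ against the good term $\int_0^t\int_0^s \| \DD u\|_{L^2}^2$. The paper's bounds for $\NN_R$ (Lemmas \ref{lem:NR} and \ref{lem:NR2}) are stated only for $\sigma < \sigma_* = \tfrac{4}{d-4}$ and are usable only when $b=\tfrac12(\delta+a)<1$, i.e.\ $\sigma < \min\{\tfrac3d+\tfrac12,\tfrac6d\}$; at the energy-critical exponent one has $\delta = \tfrac{16}{d-4}$, so $b \geq \tfrac{8}{d-4} \geq 1$ for all $5 \leq d \leq 12$, and the GN-based estimates degenerate at $\sigma=\sigma_*$ in any case. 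Your proposed fix — ``leveraging the uniform $H^2$-bound on compact time intervals coming from the variational trapping'' — does not exist: the trapping gives a \emph{lower} bound $\| \DD u(t)\|_{L^2} > \| \DD W\|_{L^2}$, not an upper bound, and the solution is by hypothesis losing $H^2$-control as $t \uparrow T$. Dropping the bivariance entirely and running the Ogawa--Tsutsumi-style argument on $\MM_{\phi_R}$ alone repairs the proof; the final statement about divergence of the critical Strichartz norm then follows, as you say, from the energy-critical blowup alternative of \cite{Pa-09}.
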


\begin{remark*}
{\em This blowup result complements the works on the focusing energy-critical biharmonic NLS in \cite{Pa-09, MiXuZh-09}, where global-in-time well-posedness in $H^2(\R^d)$ for radial data with $E[u_0] < E[W]$ and $\| \DD u_0 \|_{L^2} < \| \DD W \|_{L^2}$ is established by implementing the Kenig--Merle rigidity method (see, e.\,g., \cite{KeMe-06}) for biharmonic NLS.
}
\end{remark*}

\subsection{Comments on the Proofs}
Let us give some explanations about the strategies behind the proofs in this paper, which are based on exploiting (localized) virial and variance-type identities for the biharmonic NLS. To simplify the following discussion, we suppose that the lower-order dispersion term is absent in \eqref{eq:bNLS}, i.\,e., we assume that 
$$
\mu=0.
$$

We begin with some formal observations. To this end, we suppose that $u=u(t,x)$ is a sufficiently regular and spatially localized solution of \eqref{eq:bNLS} for the following quantities to make sense. Then, as a simple consequence of the exact scaling behavior, we formally obtain the {\em virial law} given by
\be \label{eq:virial_intro}
\frac{d}{dt} \left ( 2 \, \mathrm{Im} \int_{\R^d} \overline{u}(t) x \cdot \nabla u(t) \, dx \right )= 4 d \sigma E[u_0] - (2 d \sigma -8) \| \DD u(t) \|_{L^2}^2.
\ee
In addition, a calculation shows that the nonnegative quantity
\be
\VV[u(t)] :=  \| |\nabla|^{-1} x u(t) \|_{L^2}^2 = \int_{\R^d} \overline{u}(t) x \cdot (-\DD)^{-1} x u(t) \, dx 
\ee
formally satisfies the differential law
\be \label{eq:variance_intro}
\frac{d}{dt} \VV[u(t)] = 8 \, \mathrm{Im} \int_{\R^d} \overline{u}(t) x \cdot \nabla u(t) \, dx + \mathrm{Error}[u(t)],
\ee
where $\mathrm{Error}[(t)]$ denotes some error term due to the nonlinearity in equation \eqref{eq:bNLS}. When combined with the virial law \eqref{eq:virial_intro}, this identity turns out to be a viable substitute for the {\em variance law} used for classical NLS. Since the quantity $\VV[u]$ scales like the fourth moment $\int |x|^4 |u(t)|^2$, we refer to $\VV[u]$ as the {\em Riesz bivariance} for the biharmonic NLS. As an aside, we remark that the use of the fourth moment $\int |x|^4 |u(t)|^2$ itself (or localized versions thereof) do not seem to give any insight, which was already pointed out in \cite{BaFiMa-10}. To conclude our formal discussion, we remark that for a (sufficiently regular and localized) solution $v(t,x)$ of the {\em free} biharmonic Schr\"odinger equation $i \pt_t v = \DD^2 v$, we can combine the identities in \eqref{eq:virial_intro} and \eqref{eq:variance_intro} to obtain the conservation law
$$
\left \| \left ( |\nabla|^{-1} x +4  i t \nabla |\nabla| \right ) v(t) \right \|_{L^2}^2 = \mbox{const.}
$$
which is an analogue to the celebrated {\em pseudo-conformal law}  for classical NLS (see \cite{GiVe-79}).

Let us now explain how to rigorously exploit the formal identities above for the nonlinear biharmonic NLS in some detail.  
The proofs of Theorems \ref{thm:blowup} and \ref{thm:energy}, which address the mass-supercritical case $s_c >0$, are inspired by a strategy that was introduced by Ogawa and Tstutsumi \cite{OgTs-91} to show blowup for radial solutions for mass-supercritical NLS with radial data $u_0 \in H^1(\R^d)$ with infinite variance (i.\,e., we may have $x u_0 \not \in L^2(\R^d)$). The adaptation of this argument to biharmonic NLS requires a careful analysis of the time evolution for the localized virial quantity
\be
\MM_{\phi_R}[u(t)] = 2 \, \mathrm{Im} \int_{\R^d} \overline{u}(t) \nabla \phi_R \cdot \nabla u(t) \, dx,
\ee  
Here $\phi_R(r)$ is a suitably chosen radial cutoff functions with $\nabla \phi_R(x) \equiv x$ for $|x| \leq R$ and $\nabla \phi_R(x) \equiv \mbox{const}.$ for $|x| \gg R$. Imposing the assumptions of Theorem \ref{thm:blowup} and recalling that we assume $\mu=0$ for simplicity, we obtain the differential inequality
\be \label{ineq:MR_intro}
\frac{d}{dt} \MM_{\phi_R}[u(t)] \leq 4 d \sigma E[u_0] - \left ( 2 \delta + o_R(1) \right ) \| \DD u(t) \|_{L^2}^2 + o_R(1)  
\ee
with $\delta = d \sigma - 4>0$ and error terms $o_R(1) \to 0$ as $R \to \infty$ uniformly in $t$. In fact, such an upper bound for time evolution for $\MM_{\phi_R}[u(t)]$ is reminiscent to blowup proofs for classical NLS (see \cite{OgTs-91}) and finite time blowup follows by integrating \eqref{ineq:MR_intro} and ODE comparison. But due to the presence of the biharmonic operator $\DD^2$ here, the calculational efforts to arrive at such an inequality requires some work that makes use of commutator identities. Let us also mention that \cite{PaSh-10,MiXuZh-09,Pa-07, Pa-09} have already made use of a localized virial quantity for biharmonic NLS with less detail. However, the point here is work out the signs of certain errors terms, which turn out to be essential when proving a blowup result.

On the other hand, the proof of Theorem \ref{thm:rate} and parts of Theorem \ref{thm:mass} both depend on a new ingredient, which is perhaps the most interesting aspect of this work. Here we introduce the localized version of the Riesz bivariance defined as 
\be
\VV_{\psi_R}[u(t)] := \int_{\R^d} \overline{u}(t) \nabla \psi_R \cdot  (-\DD)^{-1} \nabla \psi_R u(t) \,d x = \left \| |\nabla|^{-1} \left  \{ \nabla \psi_R u(t) \right \} \right \|_{L^2}^2,
\ee
with some cutoff function such that $\nabla \psi_R(x) \equiv x$ for $|x| \leq R$ and $\psi_R(x) \equiv \mbox{const}.$ for $|x| \gg R$. 
A subtle fact to be kept in mind is that  the cutoff function $\psi_R(r)$ appearing in the definition of $\VV_{\psi_R}[u]$ is {\em not} identical to $\phi_R$ used in the localized virial $\MM_{\phi_R}[u]$. Instead, these cutoff functions are related via the nonlinear equation $\partial_r \psi_R(r) = \sqrt{2 \phi_R(r)}$. A calculation then yields an identity of the form
\be \label{eq:var_intro}
\frac{d}{dt} \VV_{\psi_R}[u(t)] = 4 \MM_{\phi_R}[u(t)] + \NN_R[u(t)] + \cO(1),
\ee
where the commutator term
$$
\NN_R[u(t)] =  \int_{\R^d} \overline{u}(t) \left [-i|u(t)|^{2 \sigma},  \nabla \psi_R \cdot (-\DD)^{-1} \nabla \psi_R \right ] u(t)  \, dx,
$$
with $[X,Y] \equiv XY- YX$ arises from the nonlinearity in \eqref{eq:bNLS}. Compared to classical NLS, the presence of $\NN_R[u]$ substantially complicates the analysis. However, by exploiting the radial symmetry of $u$, we are able to derive certain bounds on $\NN_R[u]$ that will be essential in the proofs of Theorems \ref{thm:rate} and \ref{thm:mass} below. With the nonnegative quantity $\VV_{\psi_R}[u]$ and suitable bounds on $\NN_R[u]$ at our disposal, we are in the position to implement the remarkable strategy of Merle-Rapha\"el-Szeftel \cite{MeRaSz-14} (developed for mass-supercritical NLS) to obtain the universal upper bounds on blowup rates for biharmonic NLS in Theorem \ref{thm:rate} above. 

As a further application of the Riesz bivariance $\VV_{\psi_R}[u]$, we obtain the quantitative lower bounds on the infinite-time blowup rates in Theorem \ref{thm:mass} for the delicate case $\mu=0$. In particular, for dimensions $d \geq 8$ and the mass-critical exponent $\sigma=4/d$, the term $\NN_R[u]$ is ``almost'' controlled by $L^2$-mass conservation, since we find that
$$
\left | \NN_R[u(t)] \right | \lesssim_\eps   R^\eps \| \DD u(t) \|^{\frac{\eps}{2}} \| u_0 \|_{L^2}^{\frac{8}{d} + 1}
$$
With the help of this bound, we deduce that radial infinite-time blowup solutions $u(t)$ with $E[u_0] < 0$ in the mass-critical case and dimensions $d\geq 8$ must grow (at least along subsequences $t_n \to +\infty$) faster than any polynomial in $t$.

\subsection{Outlook and Future Problems}
We think that this paper contains many points of departure for future work. Let us briefly mention some of them as follows.

Of course, it would be desirable to remove  the radial symmetry assumption in $\R^d$. In fact, both the localized virial and Riesz bivariance identities hold true without imposing radiality. However, at the moment, it is not clear to us how to effectively control the error terms without radial symmetry. However, if we consider the biharmonic NLS \eqref{eq:bNLS} posed on a bounded domain $\Omega \subset \R^d$, we are able to remove the radiality assumption for the existence of blowup solutions, as shown in our companion paper \cite{BoLe-15}. But the case of non-radial data in $\R^d$ seems to be a challenging open problem. 

Furthermore, it seems natural to conjecture that finite-time blowup always occurs in the setting of Theorem \ref{thm:mass}, at least in sufficiently high dimensions. Another open problem that seems worthwhile attacking is to try to improve that upper bounds in Theorem \ref{thm:rate} to the rate $\beta=\alpha$, which is strongly indicated by numerics (see \cite{BaFiMa-10, BaFiMa-10b}). So far, the fact that we can only conclude that $\beta \geq \alpha$ is due to the bounds derived for $\NN_R[u]$. We may speculate that, by exploiting delicate cancellations and sign properties in the commutator term $\NN_R[u]$, that one may eventually prove that $\beta=\alpha$ holds. 

Another line of future research would be to study the blowup dynamics of collapsing solutions close to ground state solitary waves $u(t,x) = e^{it} Q(x)$ for the biharmonic NLS \eqref{eq:bNLS} for $\mu=0$. Here, as a starting point, a much better understanding of the related nonlinear elliptic problem \eqref{eq:Q} is needed (e.\,g., a proof of non-degeneracy and uniqueness of ground states).  

Finally, we think that the strategies developed in this paper can be extended (with some effort) to polyharmonic and fractional NLS of the form
\be \label{eq:poly}
i \partial_t u = (-\DD)^{s} u - |u|^{2 \sigma} u \quad \mbox{with $(t,x) \in \R \times \R^d$},
\ee
where $s \in \N$ is an integer (polyharmonic case) or $s >0$ is a non-integer number (fractional case); see \cite{BoHiLe-15}. A formal computation shows that the corresponding (localized) variance-type quantity for equation \eqref{eq:poly} is found to be
\be
\VV_{\psi_R}^{(s)}[u(t)] = \int_{\R^d} \overline{u}(t) \nabla \psi_R \cdot (-\DD)^{-s+1} \nabla \psi_R u(t)  \, dx
\ee
with $\nabla \psi_R(x) = x$ for $|x| \leq R$ and $\nabla \psi_R(x) = \mbox{const}.$ for $|x| \gg R$. Note that in the half-wave case $s=1/2$ and with mass-critical Hartree type nonlinearity, the nonlocalized version of $\VV_{\psi_R}^{(s)}$ (i.\,e.,~we replace $\nabla \psi_R$ by the unbounded function $x$) was used by Fr\"ohlich and Lenzmann \cite{FrLe-07} to prove finite-time blowup for radial solutions of the Boson star equation. 

\subsection*{Acknowledgments}
The authors gladly acknowledge financial support from the Swiss National Science Foundation (SNF) through Grant No.~200021--149233. We thank Y.~Cho for bringing us the reference \cite{ChOzWa-14} to our attention. Furthermore, we are grateful to G.~Baruch, G.~Fibich, and E.~Mandelbaum for pointing out to their results obtained in \cite{BaFi-11, BaFiMa-10, BaFiMa-10b}.

\section{Preliminaries and Plan of the Paper}

For later use, we recall the following radial Sobolev inequality found by Strauss \cite{St-77}: For every radial function $u \in H^1(\R^d)$ with $d \geq 2$, we have the pointwise bound
\be \label{ineq:Strauss}
|x|^{\frac{d-1}{2}} |u(x)| \leq 2 \| u \|_{L^2}^{\frac 1 2} \| \nabla u \|_{L^2}^{\frac 1 2} \leq 2 \| u \|_{L^2}^{\frac 3 4} \| \DD u \|_{L^2}^{\frac 1 4} \quad \mbox{for $x \neq 0$},
\ee
where for the second inequality we additionally assume that $u \in H^2(\R^d)$ holds; we refer to \cite{Ca-03} for a simple proof of the first inequality; the second inequality is a direct consequence of the fact that $\| \nabla u \|_{L^2} \leq \| u \|_{L^2}^{\frac 1 2} \| \DD u \|_{L^2}^{\frac 1 2}$ for $u \in H^2(\R^d)$. 

Throughout this paper, we make the standard abuse of notation by writing $f=f(r)$ with $r=|x|$ for a radial function $f: \R^d \to \C$. Moreover, we use  the convention that we sum over repeated indices from $1$ to $d$, e.\,g., we have $x_k y_k = \sum_{k=1}^d x_k y_k$ etc. Furthermore, we shall write
$$
X \lesssim Y
$$
to denote that $X \leq CY$ holds with some constant $C > 0$ that depends only on $d$, $\sigma$, and the radial cutoff function $\phi:�\R^d \to \R$  introduced in Section \ref{sec:locvirial} below.

This paper is organized as follows. In Section \ref{sec:locvirial}, we derive a localized virial identity for the biharmonic NLS. In Section \ref{sec:blowup_super}, we will prove Theorems \ref{thm:blowup} and \ref{thm:energy}. The localized Riesz bivariance identity for the biharmonic NLS is derived in Section \ref{sec:locrieszvar}. In Sections \ref{sec:blowup_rate} and \ref{sec:blowup_mass}, we give the proofs of Theorems \ref{thm:rate} and \ref{thm:mass}, respectively. 

\section{Localized Virial Identity} \label{sec:locvirial}

Let $\phi : \R^d \to \R$ be a radial function with regularity property $\nabla^j \phi \in L^\infty(\R^d)$ for $1 \leq j \leq 6$ and such that
\be \label{def:phi1}
\phi(r) = \begin{dcases*} r^2/2 & for $r \leq 1$ \\ \mbox{const.} & for $r \geq 10$ \end{dcases*} \quad \mbox{and} \quad \mbox{$\phi''(r) \leq 1$ for $r \geq 0$},
\ee
 For $R > 0$ given, we define the rescaled  function $\phi_R : \R^d \to \R$ by setting
\be
\phi_R(r) := R^2 \phi \left ( \frac{r}{R} \right ). 
\ee
We readily verify the inequalities
\be \label{ineq:phi1}
1 - \phi_R''(r) \geq 0, \quad 1- \frac{\phi_R'(r)}{r} \geq 0, \quad d - \DD \phi_R(r) \geq 0 \quad \mbox{for all $r \geq 0$}.
\ee
Indeed, this first inequality follows from $\phi_R''(r) = \phi''(r/R) \leq 1$. We obtain the second inequality by integrating the first inequality on $[0,r]$ and using that $\phi_R'(0) = 0$. Finally, we find that $d - \DD \phi_R(r) = 1-\phi_R''(r) + (d-1)\{1 - \frac{1}{r} \phi_R'(r)\} \geq 0$ holds thanks to the first two inequalities in \eqref{ineq:phi1}.  

For later use, we record the following properties of $\phi_R$, which can be easily checked:
\be \label{eq:phi2}
\left \{
\begin{aligned}
& \nabla \phi_R(r) =   R \phi' \left ( \frac{r}{R} \right ) \frac{x}{|x|} = \begin{dcases*} x & for $r \leq R$ \\ 0 & for $r \geq 10R$ \end{dcases*} ; \\ 
& \| \nabla^j \phi_R \|_{L^\infty} \lesssim R^{2-j} \quad \mbox{for $0 \leq j \leq 6$} \, ; \\
& \mathrm{supp} \, ( \nabla^j \phi_R ) \subset \begin{dcases*} \left \{ |x| \leq 10 R \right \} & for $j=1,2$ \\ \left \{ R \leq |x| \leq 10 R \right \} & for $3 \leq j \leq 6$ \end{dcases*} .
\end{aligned}
\right .
\ee
For $u \in H^2(\R^d)$, we define the {\bf localized virial} of $u$ to be the quantity
\be
\MM_{\phi_R}[u] := \left \langle u, -i( \nabla \phi_R \cdot \nabla + \nabla \cdot \nabla \phi_R) u \right \rangle  =  2 \, \mathrm{Im} \int_{\R^d} \overline{u} \nabla \phi_R \cdot \nabla u,
\ee
where the last equality above follows from a simple integration by parts. In fact, we shall use both expressions depending on the situation. By the Cauchy-Schwarz inequality, we have $\left | \MM_{\phi_R}[u] \right | \lesssim R \| u \|_{L^2} \| \nabla u \|_{L^2}$. In particular, the localized virial $\MM_{\phi_R}[u]$ is well-defined for $u \in H^2(\R^d)$. 

\begin{lemma}[Time Evolution of $\MM_R$] \label{lem:MR}
Let $d \geq 2$ and $R>0$. Suppose that $u \in C([0,T); H^2(\R^d))$ is a radial solution of \eqref{eq:bNLS}. Then, for any $t \in [0,T)$, we have the differential inequality
\begin{align*}
\frac{d}{dt} \MM_{\phi_R}[u(t)] & \leq 4 d \sigma  E[u_0] - ( 2 d \sigma - 8) \| \DD u(t) \|_{L^2}^2 - (2d \sigma - 4) \mu \| \nabla u(t) \|_{L^2}^2 + X_\mu[u(t)]  \\
& \quad  + \cO  \left ( R^{-4} +  R^{-2}  \| \nabla u(t) \|_{L^2}^2 + R^{-\sigma(d-1)} \| \nabla u(t) \|_{L^2}^\sigma  + |\mu| R^{-2}  \right ),
\end{align*}
where
$$
X_\mu[u] \lesssim \begin{dcases*} 0 & for $\mu \geq 0$, \\  |\mu| \| \nabla u(t) \|_{L^2}^2  & for $\mu < 0$. \end{dcases*} 
$$
\end{lemma}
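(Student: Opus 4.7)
The plan is to compute $\frac{d}{dt}\MM_{\phi_R}[u(t)]$ as an exact identity first and only afterwards discard sign-good terms. Writing $\MM_{\phi_R}[u]=\langle u,\Gamma_{\phi_R} u\rangle$ with the symmetric operator $\Gamma_{\phi_R}:=-i(\nabla\phi_R\cdot\nabla+\nabla\cdot\nabla\phi_R)$, equation \eqref{eq:bNLS} and the selfadjointness of $\DD^2$, $\DD$ and the multiplication operator $|u|^{2\sigma}$ yield
$$\frac{d}{dt}\MM_{\phi_R}[u(t)]=\bigl\langle u,i[\DD^2,\Gamma_{\phi_R}]u\bigr\rangle-\mu\bigl\langle u,i[\DD,\Gamma_{\phi_R}]u\bigr\rangle-\bigl\langle u,i[|u|^{2\sigma},\Gamma_{\phi_R}]u\bigr\rangle.$$
The nonlinear commutator reduces after an integration by parts to $-\tfrac{2\sigma}{\sigma+1}\int_{\R^d}\DD\phi_R\,|u|^{2\sigma+2}\,dx$. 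The classical NLS-virial commutator yields $4\mu\int_{\R^d}\pt_j\pt_k\phi_R\,\overline{\pt_j u}\,\pt_k u\,dx-\mu\int_{\R^d}\DD^2\phi_R\,|u|^2\,dx$. The biharmonic commutator is the heart of the calculation: I would apply the identity for $[\DD,\Gamma_{\phi_R}]$ twice and integrate by parts until no more than two derivatives fall on $u$ in the top-order piece, arriving at a leading contribution of the form $8\int_{\R^d}\phi''_R\,|\DD u|^2\,dx$ (after exploiting the radial structure of $u$) plus remainders whose coefficients involve $\nabla^j\phi_R$ for $j\in\{3,4,5,6\}$ multiplying $|\nabla u|^2$, $|u|^2$, or mixed $\overline{u}\,\DD u$ pairings.

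Next, I would decompose $\phi''_R=1-(1-\phi''_R)$, $\phi'_R/r=1-(1-\phi'_R/r)$, and $\DD\phi_R=d-(d-\DD\phi_R)$. The ``$1$'' and ``$d$'' pieces reassemble into the exact full-space biharmonic virial expression $8\|\DD u\|_{L^2}^2+4\mu\|\nabla u\|_{L^2}^2-\tfrac{2d\sigma}{\sigma+1}\|u\|_{L^{2\sigma+2}}^{2\sigma+2}$, which, after using the energy identity to eliminate $\|u\|_{L^{2\sigma+2}}^{2\sigma+2}$, rewrites as $4d\sigma E[u_0]-(2d\sigma-8)\|\DD u\|_{L^2}^2-(2d\sigma-4)\mu\|\nabla u\|_{L^2}^2$. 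The residual ``$(1-\phi''_R)$'' contributions are pointwise nonnegative by \eqref{ineq:phi1} and enter with a non-positive sign in the $\|\DD u\|_{L^2}^2$ piece (and, when $\mu\geq 0$, also in the $\mu\|\nabla u\|_{L^2}^2$ piece), so they are simply dropped in an upper bound; for $\mu<0$ the corresponding lower-order residual is bounded in absolute value by $|\mu|\|\nabla u\|_{L^2}^2$ and is absorbed into the auxiliary term $X_\mu[u]$.

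Third, I would estimate the genuine cutoff errors. Every term with $\nabla^j\phi_R$ for $j\geq 3$ is supported in $\{R\leq|x|\leq 10R\}$ with $|\nabla^j\phi_R|\lesssim R^{2-j}$ by \eqref{eq:phi2}; pairings with $|u|^2$, $|\nabla u|^2$, or $|u|\,|\DD u|$ are controlled by Cauchy--Schwarz, the interpolation $\|\nabla u\|_{L^2}\leq\|u\|_{L^2}^{1/2}\|\DD u\|_{L^2}^{1/2}$, and $L^2$-mass conservation, producing exactly the $\cO(R^{-4})$, $\cO(R^{-2}\|\nabla u\|_{L^2}^2)$, and $\cO(|\mu|R^{-2})$ errors listed in the statement. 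The remaining nonlinear error, coming from the main term restricted to $\{|x|\geq R\}$ via the factor $d-\DD\phi_R$, is handled by the radial Strauss inequality \eqref{ineq:Strauss}, which gives $|u(x)|^{2\sigma}\lesssim R^{-\sigma(d-1)}\|u\|_{L^2}^{\sigma}\|\nabla u\|_{L^2}^{\sigma}$ uniformly on $\{|x|\geq R\}$, whence
$$\int_{|x|\geq R}|u|^{2\sigma+2}\,dx\;\lesssim\;R^{-\sigma(d-1)}\|\nabla u\|_{L^2}^{\sigma}\|u\|_{L^2}^{\sigma+2}\;\lesssim\;R^{-\sigma(d-1)}\|\nabla u\|_{L^2}^{\sigma},$$
with the conserved mass absorbed into the implicit constant.

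The principal obstacle will be the biharmonic commutator itself: tracking all terms produced by two passes of $[\DD,\Gamma_{\phi_R}]$, using the radial decomposition $\pt_j\pt_k\phi_R=\phi''_R\,\widehat{x}_j\widehat{x}_k+(\phi'_R/r)(\delta_{jk}-\widehat{x}_j\widehat{x}_k)$ to combine radial and angular contributions, and verifying that, after the $1$-versus-$(1-\phi''_R)$ and $d$-versus-$(d-\DD\phi_R)$ splits, every leftover quadratic form in $\DD u$ or $\nabla u$ carries the sign required to be either discarded or absorbed into $X_\mu[u]$ or one of the $\cO(\cdot)$ error terms. A secondary subtlety is the $\mu<0$ bookkeeping, where the NLS-virial residual flips sign and must be repackaged into $X_\mu[u]$ rather than dropped.
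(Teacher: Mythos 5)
Your proposal is correct and follows essentially the same route as the paper's proof: the same three-way commutator decomposition, the same reduction of the top-order biharmonic term to a Hessian contraction $8\int (\partial^2_{kl}\overline{u})(\partial^{2}_{lm}\phi_R)(\partial^{2}_{mk}u)$ handled via the radial Hessian formula, the same splitting $\phi_R''=1-(1-\phi_R'')$, $\phi_R'/r=1-(1-\phi_R'/r)$, $\DD\phi_R=d-(d-\DD\phi_R)$ to isolate the exact virial identity plus sign-definite residuals (dropped, or absorbed into $X_\mu$), and the same Strauss-inequality treatment of the nonlinear tail on $\{|x|\geq R\}$. The only imprecision is your shorthand ``$8\int\phi_R''|\DD u|^2$'' for the leading contribution, which in fact carries both the radial weight $\phi_R''$ on $|\partial_r^2u|^2$ and the angular weight $\phi_R'/r$ on $\tfrac{d-1}{r^2}|\partial_r u|^2$; your subsequent two-weight decomposition shows you handle this correctly, so it is not a gap.
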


\begin{remarks*}
{\em 1.~For non-radial solutions $u \in C([0,T); H^2(\R^d))$ and any $d \geq 1$ and $\sigma > 0$, the above differential inequality also holds formally true except for the error term $\cO( \ldots)$, whose bound crucially relies on the radiality of $u(t,r)$ and the condition  $d \geq 2$.

2.~Localized virial identities for biharmonic NLS have already appeared  in \cite{PaSh-10,MiXuZh-09,Pa-07, Pa-09}. However, the point here is that we show by a careful analysis that certain terms can be shown to have a certain sign, which will be essential for proving blowup theorems based on $\MM_R[u]$.} 
\end{remarks*}

\begin{proof}
We split the proof of Lemma \ref{lem:MR} into the following steps.

\medskip
{\bf Step 1 (Preliminaries and Commutator Identities).} First, we recall that 
$$
\MM_R[u(t)] = \langle u(t), \Gamma_{\phi_R} u(t) \rangle \quad  \mbox{with} \ \ \Gamma_{\phi_R} := -i \left ( \nabla \phi_R \cdot \nabla + \nabla \cdot \nabla \phi_R \right ).
$$
By taking the time derivative and using that $i\partial_t u$ is given by \eqref{eq:bNLS}, we deduce 
\be
\frac{d}{dt} \MM_{\phi_R}[u(t)] =  \AA^{(1)}[u(t)] + \AA^{(2)}[u(t)] + \BB[u(t)] 
\ee
with
$$
 \AA_R^{(1)}[u] := \left \langle u(t), [\DD^2, i \Gamma_{\phi_R}] u(t) \right \rangle, \quad \AA_R^{(2)}[u] := \left \langle u(t), [-\mu \DD, i \Gamma_{\phi_R}] u(t) \right \rangle  ,
$$ 
$$
\BB_R[u] := \left \langle u(t), [-|u|^{2 \sigma}, i \Gamma_{\phi_R}] u(t) \right \rangle.
$$
Since $\DD^2 u \in H^{-2}(\R^d)$ and $\Gamma_{\phi_R}  u \in H^{1}(\R^d)$ in general, we note that the term $\AA^{(1)}[u]$ is not well-defined for $u \in H^2(\R^d)$. Therefore, the following calculations require some higher regularity of $u(t)$; e.\,g., it suffices to assume that $u \in H^{3}(\R^d)$ holds. The claimed identities and inequality then follow by an approximation argument and passing to limits. (For instance, we could employ a Yosida type approximation  with $u_\eps = (-\eps \DD + 1)^{-1} u$ and pass to the limit $\eps \to 0^+$.) We omit the details of such a standard procedure.

As a further preliminary step, we collect some commutator identities that will come in handy below. First, we observe that
\be \label{eq:comm1}
[\DD^2, i \Gamma_{\phi_R}]  = \DD [\DD, i \Gamma_{\phi_R}] + [\DD, i \Gamma_{\phi_R}] \DD  =  2 \partial_k [\DD, i \Gamma_{\phi_R}] \partial_k +  [\partial_k, [\partial_k, [\DD, i \Gamma_{\phi_R}]]].
\ee
Note that we used the fact that $\DD A + A \DD = 2 \partial_k A \partial_k + [\partial_k, [\partial_k, A]]$ for an operator $A$. Next, a calculation yields the known commutator formula
\be \label{eq:comm2}
[\DD, i \Gamma_{\phi_R}] = [ \DD, \nabla \phi_R \cdot \nabla + \nabla \phi_R \cdot \nabla ] =  4 \partial_k (\partial^{2}_{kl} \phi_R) \partial_l + \DD^2 \phi_R. 
\ee
If we plug this back into \eqref{eq:comm1}, we obtain the identity
\be \label{eq:comm3}
[\DD^2, i \Gamma_{\phi_R}] = 8 \partial_{kl}^2 (\pt^{2}_{lm} \phi_R) \pt^{2}_{mk} + 4 \pt_k (\pt^2_{kl} \DD \phi_R) \pt_l + 2 \partial_k (\DD^2 \phi_R) \pt_k  + \DD^3 \phi_R.
\ee
We are now ready to divide the analysis of the terms $\AA^{(1)}[u]$, $\AA^{(2)}[u]$, and $\BB[u]$ into the following steps.

\medskip
{\bf Step 2 (Dispersive Parts $\AA^{(1)}_R$ and $\AA^{(2)}_R$).}  We start by recalling that the Hessian of sufficiently regular and radial function $f : \R^d \to \C$ is given by
\be \label{eq:Hessian}
\pt^2_{kl} f = \left ( \delta_{kl} - \frac{x_k x_l}{r^2} \right ) \frac{\pt_r f}{r} + \frac{x_k x_l}{r^2} \pt_r^2 f.
\ee
Applying this to  $\phi_R(r)$ and $u(t,r)$, a calculation combined with integration by parts yields that
\begin{align*}
8 \left \langle u, \partial_{kl}^2 (\pt^{2}_{lm} \phi_R) \pt^{2}_{mk} u \right \rangle & = 8 \int_{\R^d} ( \pt_{kl}^2 \overline{u})  ( \pt^2_{lm} \phi_R ) ( \pt^2_{mk} u ) \\
&  = 8 \int_{\R^d} \Big( \pt_r^2 \phi_R \, |\pt_r^2 u|^2 + \frac{d - 1}{r^2} \, \frac{\pt_r \phi_R}{r} \, |\pt_r u|^2 \Big) \\
& = 8 \int_{\R^d} |\DD u|^2 - \left (1 - \pt_r^2 \phi_R \right ) |\pt_r^2 u|^2 - \left ( 1- \frac{\pt_r \phi_R}{r} \right ) \frac{d-1}{r^2} |\pt_r u|^2 .
\end{align*}
Here we also used the identity $\int_{\R^d} |\Delta u|^2  = \int_{\R^d} \left \{ |\pt_r^2 u|^2 + \frac{d - 1}{r^2} \, |\pt_r u|^2 \right \}$ for radial $u \in H^2(\R^d)$, which follows from integration by parts in $r=|x|$. In view of the inequalities \eqref{ineq:phi1}, we deduce the bound 
\be \label{ineq:MR1}
8 \left \langle u, \partial_{kl}^2 (\pt^{2}_{lm} \phi_R) \pt^{2}_{mk} u \right \rangle \leq 8 \int_{\R^d} |\DD u|^2 .
\ee
Furthermore, straightforward arguments yield that
\be
\begin{aligned} \label{ineq:MR2}
& \left |  \left \langle u,  \pt_k (\pt^2_{kl} \DD \phi_R) \pt_l  u \right \rangle \right | \lesssim \| \pt^2_{kl} {\DD \phi_R} \|_{L^\infty} \| \nabla u \|_{L^2}^2 \lesssim R^{-2} \| \nabla u \|_{L^2}^2,\\
& \left |  \left \langle u, \partial_k (\DD^2 \phi_R) \pt_k u \right \rangle \right  | \lesssim \| \DD^2 \phi_R \|_{L^\infty} \| \nabla u \|_{L^2}^2 \lesssim R^{-2} \| \nabla u \|_{L^2}^2, \\
&  \left | \left \langle u, \DD^3 \phi_R u \right \rangle �\right | \lesssim \| \DD^3 \phi_R \|_{L^\infty} \| u \|_{L^2}^2 \lesssim R^{-4} \| u \|_{L^2}^2.  
\end{aligned}
\ee
By combining the bounds in \eqref{ineq:MR1} and \eqref{ineq:MR2}, we conclude that
\be \label{eq:A1}
\AA^{(1)}_R[u(t)] \leq 8 \int_{\R^d} |\Delta u(t)|^2 + \cO \left ( R^{-4} + R^{-2} \| \nabla u(t) \|_{L^2}^2 \right ).
\ee
Next, let us turn to $\AA^{(2)}_R[u]$. Here we use \eqref{eq:Hessian} and \eqref{eq:comm2} and find by calculation that
\begin{align*}
\AA_R^{(2)}[u] & =  4 \mu \int_{\R^d} ( \pt_k \overline{u} ) (\pt_{kl}^2 \phi_R) ( \pt_l u) - \mu \int_{\R^d} (\DD^2 \phi_R) |u|^2 \\
& = 4 \mu \int_{\R^d} (\pt_r^2 \phi_R) |\pt_r u|^2 - \mu \int_{\R^d} (\DD^2 \phi_R) |u|^2 \\
& = 4 \mu \int_{\R^d} |\nabla u|^2 + X_\mu[u(t)] - \mu \int_{\R^d} (\DD^2 \phi_R) |u|^2,
\end{align*}
with
\be
X_\mu[u] = - 4 \mu \int_{\R^d} (1- \pt_r^2 \phi_R) |\pt_r u|^2 .
\ee 
From \eqref{ineq:phi1} and \eqref{eq:phi2} we recall that $1-\pt_r^2 \phi_R \geq 0$ and $\|1-\pt_r^2 \phi_R \|_{L^\infty} \lesssim 1$. Hence,
\be
X_\mu[u] \lesssim \begin{dcases*} 0 & for $\mu \geq 0$, \\ |\mu| \| \nabla u \|_{L^2}^2 & for $\mu < 0$. \end{dcases*} 
\ee
Since $\| \DD^2 \phi_R \|_{L^\infty} \lesssim R^{-2}$, we finally obtain
\be \label{eq:A2}
\AA^{(2)}_R[u(t)] = 4 \mu \int_{\R^d} |\nabla u(t)|^2 + X_\mu[u(t)] + \cO \left ( |\mu| R^{-2} \right ) .
\ee

\medskip
{\bf Step 3 (Nonlinearity Term $\BB_R[u]$ and Conclusion).}  Here we note that integration by parts yields 
\begin{align*}
\BB_R[u] & = - \left \langle u, [|u|^{2 \sigma},  \nabla \phi_R \cdot \nabla + \nabla \cdot \nabla \phi_R] u \right \rangle = 2 \int_{\R^d} |u|^2 \nabla \phi_R \cdot  \nabla (|u|^{2 \sigma}) \\
& =  - \frac{2 \sigma}{\sigma+1} \int_{\R^d} (\DD \phi_R) |u|^{2 \sigma +2}, 
\end{align*}
where we also made use of the identity $\nabla (|u|^{2 \sigma+2}) = \frac{\sigma+1}{\sigma} \nabla (|u|^{2 \sigma}) |u|^2$. Since $\phi_R(r) = r^2/2$ for $r \leq R$ and hence $\DD \phi_R(r) - d \equiv 0$ for $r \leq R$, we obtain
\begin{align*}
\BB_R[u] & = - \frac{2 \sigma d}{\sigma+1} \int_{\R^d} |u|^{2 \sigma+2} - \frac{2 \sigma}{\sigma+1} \int_{|x| \geq R} ( \DD \phi_R -d ) |u|^{2 \sigma+2} \\
& = - \frac{2 \sigma d}{\sigma+1} \int_{\R^d} |u|^{2 \sigma+2} + \cO \left ( R^{-\sigma(d-1)} \| \nabla u \|_{L^2}^{\sigma} \right ), 
\end{align*}
where the last step follows from $\| \DD \phi_R - d \|_{L^\infty} \lesssim 1$ and applying the Strauss inequality, which gives us
$$
 \int_{|x| \geq R} |u|^{2 \sigma + 2}  \lesssim\|u\|_{L^2}^2   \|u\|_{L^\infty(|x| \geq R)}^{2 \sigma} \lesssim R^{- \sigma (d-1)} \|u\|_{L^2}^{2 + \sigma} \|\nabla u\|_{L^2}^\sigma.
$$
Finally, we combine \eqref{eq:A1} and \eqref{eq:A2} with the estimate for $\BB_{R}[u]$ to deduce that 
\begin{align*}
\frac{d}{dt} \MM_R[u(t)] & \leq 8 \int_{\R^d} |\Delta u(t)|^2 + 4 \mu \int_{\R^d} |\nabla u|^2 - \frac{2 \sigma d}{\sigma+1} \int_{\R^d} |u|^{2 \sigma+2} + X_\mu[u(t)]\\
& \quad + \cO \left ( R^{-4} + R^{-2}  \| \nabla u(t) \|_{L^2}^2 + R^{-\sigma(d-1)} \| \nabla u(t) \|_{L^2}^\sigma  +  |\mu| R^{-2} \right ) \\
& = 4 d \sigma E[u_0] -  (2 d \sigma -8) \| \DD u(t) \|_{L^2}^2 - (2d \sigma -4) \mu \| \nabla u(t) \|_{L^2}^2 + X_\mu[u(t)] \\
&  \quad + \cO \left (  R^{-4} +  R^{-2}  \| \nabla u(t) \|_{L^2}^2 + R^{-\sigma(d-1)} \| \nabla u(t) \|_{L^2}^\sigma +  |\mu| R^{-2} \right ),
\end{align*}
where we also used the conservation of energy $E[u(t)]=E[u_0]$. 

This completes the  proof of Lemma \ref{lem:MR}.
 \end{proof}

\section{Existence of Blowup for Mass-Supercritical Case} \label{sec:blowup_super}

In this section, we will prove Theorems \ref{thm:blowup} and \ref{thm:energy}. With Lemma \ref{lem:MR} at hand, we can follow a strategy that has been introduced by Ogawa and Tsutsumi to show blowup for radial (infinite-variance) solutions for NLS; see also \cite{SuSu-99} for a review on this method as well as \cite{KeMe-06,KiVi-10} for energy-critical NLS. Although the proofs of Theorems \ref{thm:blowup} and \ref{thm:energy} are very similar, we give them separately for the sake of clarity.

\subsection{Proof of Theorem \ref{thm:blowup}}
Let us assume that $d \geq 2$, $\mu \in \R$, and $0 < s_c < 2$ with $\sigma \leq 4$. Suppose that $u_0 \in H^2(\R^d)$ is radial and let $u \in C^0([0;T);H^2(\R^d)))$ be the solution of \eqref{eq:bNLS}.

For $R > 0$, we let $\phi_R(r)= \phi(r/R)$ be the radial cutoff function introduced in Section \ref{sec:locvirial} above. For notational convenience, we write
$$
\MM_{R}[u(t)] \equiv \MM_{\phi_R}[u(t)]
$$
to denote the localized virial defined in Section \ref{sec:locvirial} above. Furthermore, we define the number 
\be
\delta := d \sigma -4
\ee for notational convenience. We split the rest of the proof according to the following three cases, which clearly cover the assertions (i) and (ii) in Theorem \ref{thm:blowup}.

\medskip
{\bf Case 1: $\mu \geq 0$ and $E[u_0] < 0$}.  From Lemma \ref{lem:MR}, we deduce that
\begin{align*}
\frac{d}{dt} \MM_R[u(t)] & \leq 4 d \sigma E[u_0] - 2 \delta \| \DD u(t) \|_{L^2}^2  \\
& \quad + \cO \left ( R^{-4} + R^{-2} \| \DD u(t) \|_{L^2} + R^{-\sigma(d-1)} \| \DD u(t) \|_{L^2}^{\sigma/2} + |\mu| R^{-2} \right ), 
\end{align*}
where we also used that $\| \nabla u(t) \|_{L^2} \leq C(u_0) \| \DD u(t) \|_{L^2}^{1/2}$. Since $\sigma \leq 4$ and $E[u_0] < 0$ by assumption, we can choose $R > 0$ sufficiently large such that
\be \label{ineq:virialmaster}
\frac{d}{dt} \MM_R[u(t)] \leq 2 d \sigma E[u_0] - \delta \| \DD u(t) \|_{L^2}^2 \quad \mbox{for $t \in [0,T)$}. 
\ee
We are now ready to argue by contradiction as follows. Suppose that $T=+\infty$ holds. From \eqref{ineq:virialmaster} we conclude that $\MM_R[u(t)] \leq 0$ for all $t \geq t_1$ with some sufficiently large time $t_1 \geq 0$. In particular, we have $\mathcal{M}_{R}[u(t_1)] \leq 0$. Hence, by integrating \eqref{ineq:virialmaster2} on $[t_1,t]$ with $t>t_1$ and using that $E[u_0] \leq 0$, we get
$$
\MM_{R}[u(t)] \leq -\delta \int_{t_1}^t \|\Delta u(s) \|_{L^2}^2 \, ds \leq 0.
$$
Next, by the Cauchy--Schwarz inequality,
$$
| \MM_{R}[u(t)] | \lesssim \| \nabla \phi_R \|_{L^\infty} \| u (t) \|_{L^2} \| \nabla u(t)\|_{L^2} \leq C(u_0) R \| \Delta u(t) \|_{L^2}^{\frac 1 2}.
$$
Thus we find
$$
\MM_{R}[u(t)] \leq - A  \int_{t_1}^t  \left |  \MM_R[u(s)] \right |^4 \, ds  \quad \mbox{with $A := C(\delta,R) > 0$}.
$$
Let us  define $z(t) := \int_{t_1}^t \left | \mathcal{M}_{R}[u(s)] \right |^4 \,ds$ for $t \geq t_1$ and fix some time $t_2 > t_1$. Clearly, the function $z(t)$ is strictly increasing and nonnegative. Moreover, we have $ \mathcal{M}_{R}[u(t)] = z'(t) \geq A^4 z(t)^4$. Hence, if we integrate this differential inequality on $[t_2, t]$, we obtain
$$
\mathcal{M}_{R}[u(t)] \leq -A z(t) \leq \frac{-A z(t_2)}{\left (1-3 A^4 z(t_2)^3 (t-t_2) \right )^{\frac 1 3}} \quad \mbox{for all $t > t_2$}.
$$
But this shows that $\mathcal{M}_{R}[u(t)] \to -\infty$ as $t \to t_*$ for some finite time $t_* < +\infty$. Therefore, the solution $u(t)$ cannot exist for all $t \geq 0$. By the blowup alternative for the energy-subcritical case $s_c < 2$, this completes the proof of Theorem \ref{thm:blowup} for $\mu \geq 0$ and $E[u_0] < 0$.

\medskip
{\bf Case 2: $\mu < 0$.} We apply Lemma \ref{lem:MR} to find
\begin{align*}
\frac{d}{dt} \MM_R[u(t)] & \leq 4 d \sigma E[u_0] - 2 \delta \| \DD u(t) \|_{L^2}^2  + A |\mu| \| \nabla u(t) \|_{L^2}^2 \\
& \quad + \cO \left ( R^{-4} + R^{-2} \| \DD u(t) \|_{L^2} + R^{-\sigma(d-1)} \| \DD u(t) \|_{L^2}^{\sigma/2} + |\mu| R^{-2} \right )
\end{align*}
with some universal constant $A > 0$. Now we use $\| \nabla u \|_{L^2}^2 \leq \frac{1}{2 \eta} \| u \|_{L^2}^2 + \frac{\eta}{2} \| \DD u \|_{L^2}$ with $\eta = 2 \delta /(A |\mu|)$, which yields 
\begin{align*}
\frac{d}{dt} \MM_R[u(t)] & \leq 4 d \sigma E[u_0] +  \frac{A^2 \mu^2}{4 \delta} M[u_0] -  \delta \| \DD u(t) \|_{L^2}^2   \\
& \quad + \cO \left ( R^{-4} + R^{-2} \| \DD u(t) \|_{L^2} + R^{-\sigma(d-1)} \| \DD u(t) \|_{L^2}^{\sigma/2} + |\mu| R^{-2} \right ) \\
& = 4 d \sigma \left ( E[u_0]  + \varkappa \mu^2 M[u_0] \right ) -  \delta \| \DD u(t) \|_{L^2}^2  \\
& \quad + \cO \left ( R^{-4} + R^{-2} \| \DD u(t) \|_{L^2} + R^{-\sigma(d-1)} \| \DD u(t) \|_{L^2}^{\sigma/2} + |\mu| R^{-2} \right ) ,
\end{align*}
where we have set $\varkappa := A^2 / (16  \delta d \sigma)$. Thus if we assume that $E[u_0] + \varkappa \mu^2 M[u_0] < 0$ and choose $R > 0$ sufficiently large, we deduce
$$ 
  \frac{d}{dt} \MM_R[u(t)] \leq e - \delta' \| \DD u(t) \|_{L^2}^2 \quad \mbox{for all $t \in [0,T)$}
$$
with some constants $e > 0$ and $\delta' > 0$. If we now use the arguments presented following \eqref{ineq:virialmaster} above, we deduce that $u(t)$ must blowup in finite time.

\medskip
{\bf Case 3: $\mu=0$ and $E(u_0) \geq 0$}. Suppose that $\mu=0$ holds and assume that $E(u_0) \geq 0$ satisfies the conditions
\be  \label{ineq:ass1}
E[u_0]^{s_c} M[u_0]^{2-s_c} < E[Q]^{s_c} M[Q]^{2-s_c} =: \Lambda[Q],
\ee
\be \label{ineq:ass2}
\| \DD u_0 \|_{L^2}^{s_c} \| u_0 \|_{L^2}^{2-s_c} > \| \DD Q \|_{L^2}^{s_c} \| Q \|_{L^2}^{2-s_c}.
\ee
Next, by using energy conservation, we notice the lower bound
\be \label{ineq:E0lower}
E[u_0] = \frac{1}{2} \| \DD u(t) \|_{L^2}^2 - \frac{1}{2 \sigma +2} \| u(t) \|_{L^{2 \sigma+2}}^{2 \sigma + 2} \geq F \left ( \| \DD u(t) \|_{L^2} \right ),
\ee
where the last inequality follows from $L^2$-mass conservation $M[u(t)] = M[u_0]$ and the interpolation inequality \eqref{ineq:GNS} with the function $F :[0,�\infty) \to \R$ defined as
\be
F(y) := \frac{1}{2} y^2 - \frac{C_{d, \sigma}}{2 \sigma +2} M[u_0]^{\frac{\sigma}{2}( 2- s_c)} y^{2 + \sigma s_c} .
\ee 
Here $C_{d, \sigma} > 0$ denotes the optimal constant for inequality \eqref{ineq:GNS}. It is straightforward to check that $F(y)$ has a unique global maximum  attained at
\be \label{eq:ymax}
y_{\max} = (K_{d, \sigma})^{\frac{1}{s_c}} M[u_0]^{- \frac{2-s_c}{2 s_c}}  \quad \mbox{with} \quad K_{d,\sigma}=\left ( \frac{4(\sigma+1)}{d \sigma C_{d, \sigma}} \right )^{\frac{1}{\sigma}},
\ee
and 
\be
F(y_{\max}) = \frac{s_c}{d} y_{\max}^2 .
\ee
On the other hand, by Pohozaev identities, we obtain 
$$
K_{d, \sigma} = \| \DD Q \|_{L^2}^{s_c} \| Q \|_{L^2}^{2-s_c} = \left ( \frac{s_c}{d} \right )^{-\frac{s_c}{2}} \Lambda[Q]^{\frac 1 2}. 
$$
Using this, we conclude that the conditions \eqref{ineq:ass1}--\eqref{ineq:ass2} imply that 
$$
E[u_0] < F(y_{\max}) \quad \mbox{and} \quad \| \DD u_0 \|_{L^2} > y_{\max}.
$$ 
In view of \eqref{ineq:E0lower} and by continuity in time, we deduce that
\be \label{ineq:DDulower}
\| \DD u(t) \|_{L^2} > y_{\max} \quad \mbox{for all $t \in [0,T)$},
\ee
since otherwise there exists $t_* \in (0,T)$ such that $\| \DD u(t_*) \|_{L^2} = y_{\max}$, which contradicts \eqref{ineq:E0lower} and $E[u_0] < F(y_{\max})$. Next,  we choose $\eta > 0$ sufficiently small such that
$$
E[u_0]^{s_c} M[u_0]^{2-s_c}  \leq (1-\eta)^{s_c} \Lambda[Q].
$$
Using \eqref{ineq:DDulower}, an elementary calculation yields that
$$
2\delta (1-\eta)  \| \DD u(t) \|_{L^2}^2 \geq 4 d \sigma E[u_0] \quad \mbox{for all $t \in [0,T)$},
$$
where we recall that $\delta = d \sigma -4$. Thus from Lemma \ref{lem:MR} and the previous discussion we obtain from inequality \eqref{ineq:virialmaster} the upper bound
\be
\begin{aligned}
& \frac{d}{dt} \MM_R[u(t)] \leq 4 d \sigma E[u_0] - 2 \delta \| \DD u(t) \|_{L^2}^2 \\
& \phantom{\frac{d}{dt} \MM_R[u(t)] } \quad + \cO \left ( R^{-4} + R^{-2} \| \DD u(t) \|_{L^2} + R^{-\sigma(d-1)} \| \DD u(t) \|_{L^2}^{\sigma/2} \right ) \\
& \phantom{\frac{d}{dt} \MM_R[u(t)] } \leq   - \left ( \delta \eta + o_R(1) \right ) \| \DD u(t) \|_{L^2}^2 + o_R(1),
\end{aligned}
\ee
with $o_R(1) \to 0$ as $R \to \infty$ uniformly in $t$. Thus by choosing $R > 0$ sufficiently large and using the uniform lower bound \eqref{ineq:DDulower}, we conclude 
\be \label{ineq:virialmaster2}
\frac{d}{dt} \MM_R[u(t)] \leq -\frac{\delta \eta}{2}  \| \DD u(t) \|_{L^2}^2 \quad \mbox{for all $t \in [0,T)$}.
\ee
We are now ready to argue by contradiction as follows. Suppose that $T=+\infty$ holds. Using the uniform lower bound $\| \DD u(t) \|_{L^2} > y_{\max} > 0$ for all $t \geq 0$ and integrating \eqref{ineq:virialmaster2}, we conclude that $\MM_R[u(t)] \leq 0$ for all $t \geq t_1$ with some sufficiently large time $t_1 \geq 0$. In particular, we have $\mathcal{M}_{R}[u(t_1)] \leq 0$. Hence, by integrating \eqref{ineq:virialmaster2} on $[t_1,t]$ with $t>t_1$, we get
$$
\MM_{R}[u(t)] \leq -\frac{\delta \eta}{2} \int_{t_1}^t \|\Delta u(s) \|_{L^2}^2 \, ds \leq 0.
$$
As before, this integral inequality implies that $u(t)$ blows up in finite time. 

The proof of Theorem \ref{thm:blowup} is now complete. \hfill $\blacksquare$

\subsection{Proof of Theorem \ref{thm:energy}}
Let $d \geq 5$ and $\sigma = \frac{4}{d-4}$, i.\,e., we assume that $s_c=2$ holds. Suppose $u_0 \in H^2(\R^d)$ is radial and let $u \in C^0([0,T); H^2(\R^d))$ denote the corresponding solution of \eqref{eq:bNLS}. Since the proof of Theorem \ref{thm:energy} is very similar to the one given for Theorem \ref{thm:blowup} above, we only discuss the following case and leave the remaining (simpler) cases to the reader.

Let us suppose that $\mu= 0$ holds and assume $u_0 \in H^2(\R^d)$ is radial with
\be \label{ineq:Wass}
0 \leq E[u_0] < E[W] \quad \mbox{and} \quad \| \DD u_0 \|_{L^2} > \| \DD W \|_{L^2}.
\ee
For notational convenience, we set $p= \frac{2d}{d-4}$. By energy conservation and Sobolev's inequality, we have the lower bound
\be
E[u_0] = \frac{1}{2} \| \DD u(t) \|_{L^2}^2 - \frac{1}{p} \| u(t) \|_{L^p}^p \geq F \left (\| \DD u(t)\|_{L^2} \right ),
\ee
where the function $F : [0,\infty) \to \R$ is given by
\be
F(y) := \frac{1}{2} y^2 - \frac{C_d^p}{p} y^{p}.
\ee
Recall that $C_d > 0$ denotes the optimal constant for inequality \eqref{ineq:sob}. Again, we notice that $F(y)$ has a unique global maximum given by
$$
F(y_{\max}) = \frac{2}{d} y_{\max}^2 \quad \mbox{with} \quad y_{\max} =  \left ( \frac{1}{C_d} \right )^{\frac{d}{4}}.
$$
On the other hand, by the Pohozaev identities \eqref{eq:Wpoho},
$$
\| \DD W \|_{L^2} = y_{\max} \quad \mbox{and} \quad F(y_{\max}) = E[W].
$$
Thus from \eqref{ineq:Wass} we infer that
$$
E[u_0] < F(y_{\max}) \quad \mbox{and} \quad \| \DD u_0 \|_{L^2} > \| \DD W \|_{L^2}.
$$
By a simple continuity argument, we deduce that $\| \DD u(t) \|_{L^2} > \| \DD W \|_{L^2}$ for all $t \in [0,T)$, as in the proof of Theorem \ref{thm:blowup}. Next, from Lemma \ref{lem:MR} we obtain
\be \label{ineq:virial_energy}
\begin{aligned}
\frac{d}{dt} \MM_R[u(t)] & \leq \frac{16d}{d-4} \left (  E[u_0] - \frac{2}{d} \| \DD u(t) \|_{L^2}^2 \right ) \\
& \quad +  \cO \left ( R^{-4} + R^{-2} \| \DD u(t) \|_{L^2} + R^{-\frac{4(d-1)}{d-4}} \| \DD u(t) \|_{L^2}^{\frac{2}{d-4}} \right ) .
\end{aligned}
\ee
Now we choose $\eta > 0$ sufficiently small such that
$$
E[u_0] \leq (1-\eta) E[W] .
$$
Since $\| \DD u(t) \|_{L^2} > \| \DD W \|_{L^2}$ and $E[W] = \frac{2}{d} \| \DD W \|_{L^2}^2$, we deduce
$$
(1-\eta) \frac{2}{d} \| \DD u(t) \|_{L^2}^2 \geq E[u_0] \quad \mbox{for all $t \in [0,T)$}.
$$
Going back to \eqref{ineq:virial_energy} and choosing $R >0$ sufficiently large, we conclude
\be \label{ineq:final_virial}
\frac{d}{dt} \MM_R[u(t)]  \leq - \frac{16}{d-4}  \eta \| \DD u(t) \|_{L^2}^2  \quad \mbox{for all $t \in [0,T)$},
\ee
where we also made use of the uniform lower bound $\| \DD u(t) \|_{L^2}> \| \DD W \|_{L^2}$ to absorb the error term $\cO(R^{-4})$. With estimate \eqref{ineq:final_virial} at hand, we can now conclude that $u(t)$ cannot exist for all times $t \geq 0$, in the same fashion as we did with  \eqref{ineq:virialmaster2} in the proof of Theorem \ref{thm:blowup} above.

This completes the proof of Theorem \ref{thm:energy}. \hfill $\blacksquare$

\section{Localized Riesz Bivariance and Estimates} \label{sec:locrieszvar}

Let $\phi : \R^d \to \R$ be a radial function as in Section \ref{sec:locvirial} above. In addition, we suppose that
\be \label{eq:phi_Riesz}
\left \{
\begin{aligned}
& \phi(r) \geq 0 \quad \mbox{for $r \geq 0$} \quad \mbox{and} \quad \phi(r) \equiv 0 \quad \mbox{for $r \geq 10$}, \\
& \nabla^j \sqrt{\phi} \in L^\infty(\R^d) \quad \mbox{for $0 \leq j \leq 6$}.
 \end{aligned}
 \right .
\ee
For details on how to choose such a function $\phi(r)$, we refer to Appendix \ref{sec:cutoff}. For $R > 0$, we define the rescaled function
$$
\phi_R(r) := R^2 \phi \left ( \frac{r}{R} \right )
$$
Now we introduce another radial cutoff function $\psi_R : \R^d \to \R$ that is given by
\be \label{def:psiR}
\psi_R(r) := \int_0^r \sqrt{2 \phi_R(s)} \, d s. 
\ee
It is elementary to check that
\be \label{eq:psi1}
\begin{aligned}
& \psi_R(r) = \begin{dcases*} r^2/2 & for $r \leq R$ \\ \mbox{const.} & for $r \geq 10R$ \end{dcases*}, \quad \nabla \psi_R(r) = \begin{dcases*} x & for $r \leq R$ \\ 0 & for $r \geq 10R$ \end{dcases*} ; \\
& \| \nabla^j \psi_R \|_{L^\infty} \lesssim R^{2-j} \quad \mbox{for $0 \leq j \leq 6$} \, ; \\
& \mathrm{supp} \, ( \nabla^j \psi_R ) \subset \begin{dcases*} \left \{ |x| \leq 10 R \right \} & for $j=1,2$ \\ \left \{ R \leq |x| \leq 10 R \right \} & for $3 \leq j \leq 6$ \end{dcases*} .
\end{aligned}
\ee
Furthermore by differentiating $\phi_R(r) = \frac{1}{2} |\nabla \psi_R(r)|^2= \frac{1}{2} (\partial_k \psi_R)(\partial_k \psi_R)$, we deduce that
\be \label{eq:mastercutoff}
\partial_l \phi_R = (\pt^{2}_{kl} \psi_R)( \partial_k \psi_R),
\ee  
for $l=1, \ldots, d$. This identity will be used below.

For the rest of this section, we assume that $d \geq 3$ holds. We define the {\bf localized Riesz bivariance} by setting
\be
\VV_{\psi_R}[u] := \left \langle u, \nabla \psi_R \cdot (-\DD)^{-1} \nabla \psi_R u \right \rangle = \left  \langle \partial_k \psi_R  u, (-\DD)^{-1} \partial_k \psi_R u \right \rangle .
\ee 
Using that $(-\DD)^{-1} = |\nabla|^{-2}$ and by Plancherel's theorem, we discover that
\be \label{id:VirPos}
\VV_{\psi_R}[u] = \| |\nabla|^{-1} (\nabla \psi_R u) \|_{L^2}^2 = \int_{\R^d} |\xi|^{-2} \big | \widehat{( \nabla \psi_R u)}(\xi) \big |^2 \, d \xi.
\ee 
Clearly $\VV_{\psi_R}[u] \geq 0$ is nonnegative and finite for $u \in H^2(\R^d)$, since we have
\be \label{ineq:HardyVR}
\begin{aligned}
\VV_{\psi_R}[u] & = \| |\nabla|^{-1} (\nabla \psi_R u) \|_{L^2}^2 \leq C \| |x| (\nabla \psi_R u ) \|_{L^2}^2 \leq C \| |x| \nabla \psi_R \|_{L^\infty}^2 \| u \|_{L^2}^2 \\
                & \lesssim R^4 \| u \|_{L^2}^2,
\end{aligned}
\ee
using the Hardy-type inequality $\| |\nabla|^{-1} f \|_{L^2} \leq  C \| |x| f \|_{L^2}$ valid in dimensions $d \geq 3$. (Notice that $\VV_{\psi_R}[u]$ is already finite if we only assume that $u$ belongs to $L^2(\R^d)$.) 

Suppose now that $u \in C^0([0;T); H^2(\R^d))$ solves \eqref{eq:bNLS} and let $\psi_R$ be as above with $R > 0$ given. For the rest of the section, let us denote the localized Riesz bivariance and the localized virial by
\be
\VV_{R}[u(t)] \equiv \VV_{\psi_R}[u(t)], \quad \MM_R[u(t)] \equiv M_{\phi_R}[u(t)],
\ee
respectively. 

\begin{remark*} {\em
We emphasize that we use the {\em different} cutoff functions $\psi_R$ and $\phi_R$ for $\VV_{R}[u]$ and $\MM_{R}[u]$, respectively, where the relation \eqref{eq:mastercutoff} will be important.}
\end{remark*}

We have the following technical main result.

\begin{lemma}[Time Evolution of $\VV_{R}$] \label{lem:VR} 
Let $d \geq 3$ and suppose $u \in C([0,T); H^2(\R^d))$ is a radial solution of \eqref{eq:bNLS}. Then, for any $t \in [0,T)$, it holds that
$$
\frac{d}{dt} \VV_{R}[u(t)] = 4 \, \MM_{R}[u(t)] + \NN_{R}[u(t)] + \cO \left ( 1 + | \mu| R^2 \right ),
$$
where
$$
 \NN_R[u] = - i \left \langle u(t), [|u|^{2 \sigma}, \pt_k \psi_R (-\DD)^{-1} \pt_k \psi_R] u(t) \right \rangle .%- 2\, \mathrm{Im} \int_{\R^d} \overline{u} \partial_i \phi_R (-\DD)^{-1} \{ \partial_i \phi_R |u|^{2 \sigma} u \} .
$$
\end{lemma}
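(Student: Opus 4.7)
The plan is to start from the formal Heisenberg-type identity
\[
\frac{d}{dt}\VV_R[u(t)] = i\langle u(t), [L, A_R] u(t)\rangle,\quad L = \DD^2 - \mu\DD - |u|^{2\sigma},
\]
with $A_R := \pt_k\psi_R(-\DD)^{-1}\pt_k\psi_R$ so that $\VV_R[u] = \langle u, A_R u\rangle$. Since $\DD^2 u$ and $A_R u$ lie in incompatible Sobolev spaces at the level of regularity $u \in H^2$, this identity is justified by a Yosida-type approximation exactly as in the proof of Lemma \ref{lem:MR}. The splitting $[L, A_R] = [\DD^2, A_R] - \mu [\DD, A_R] - [|u|^{2\sigma}, A_R]$ recognizes the nonlinear commutator as $\NN_R[u]$, so the task reduces to proving (A) $i\langle u, [\DD^2, A_R] u\rangle = 4\MM_R[u] + \cO(1)$ and (B) $-i\mu \langle u, [\DD, A_R] u\rangle = \cO(|\mu| R^2)$.

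For the main step (A), I would use $[\DD^2, A_R] = \DD [\DD, A_R] + [\DD, A_R]\DD$ together with a single integration by parts; the cross term $\langle \DD u, A_R \DD u\rangle$ is real by self-adjointness of $A_R$, so it suffices to evaluate $-2\,\mathrm{Im}\langle \DD u, [\DD, A_R] u\rangle$. The commutator identity $[\DD, f] = (\DD f) + 2(\pt_l f)\pt_l$ applied to $f = \pt_k\psi_R$ expands
\[
[\DD, A_R] = 2(\pt^2_{kl}\psi_R)\pt_l(-\DD)^{-1}\pt_k\psi_R + 2\pt_k\psi_R(-\DD)^{-1}(\pt^2_{kl}\psi_R)\pt_l + E_R,
\]
where $E_R$ collects the two remainder terms carrying a factor $\pt_k\DD\psi_R$. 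Applying the outer $\DD$ and exploiting the algebra $\DD(-\DD)^{-1} = -I$ then produces, as the surviving principal piece, the operator $-2\bigl[(\pt^2_{kl}\psi_R)(\pt_k\psi_R)\pt_l + \pt_l(\pt^2_{kl}\psi_R)(\pt_k\psi_R)\bigr]$. The master identity \eqref{eq:mastercutoff}, which is precisely why $\psi_R$ was defined through the nonlinear relation $\pt_r\psi_R = \sqrt{2\phi_R}$, converts this into $-2(\nabla\phi_R\cdot\nabla + \nabla\cdot\nabla\phi_R)$; taking $-2\,\mathrm{Im}$ against $u$ reconstitutes exactly $4\MM_R[u]$ by the definition of the localized virial.

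All remaining contributions to step (A) --- those in $E_R$ and those generated by commuting the outer $\DD$ through the coefficients $\pt^2_{kl}\psi_R$ and $\pt_k\psi_R$ --- carry at least a third-order derivative of $\psi_R$; by \eqref{eq:psi1} these are supported in the annulus $R \leq |x| \leq 10R$ with pointwise size $\lesssim R^{2-j}$, so combining mass conservation with the $L^2$-boundedness of the Riesz transforms and the Hardy-type bound $\||\nabla|^{-1}f\|_{L^2} \lesssim \||x|f\|_{L^2}$ used already in \eqref{ineq:HardyVR} forces each of them to be $\cO(1)$ in $R$, uniformly in $t$. For step (B) the analysis is cruder: after an analogous integration by parts, the pointwise bounds $\|\pt_k\psi_R\|_\infty \lesssim R$ and $\|\pt^2_{kl}\psi_R\|_\infty \lesssim 1$ together with mass conservation yield $|\langle u, [\DD, A_R]u\rangle| \lesssim R^2 \|u_0\|_{L^2}^2$, giving the claimed $\cO(|\mu| R^2)$. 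The main difficulty I anticipate is the delicate bookkeeping in (A), where several cancellations between the two principal summands of $[\DD, A_R]$ must be tracked, and where it is only the identity \eqref{eq:mastercutoff} that makes it manifest that the surviving $\cO(R^0)$ contribution regroups exactly into $4\MM_R[u]$ rather than producing uncontrolled first-order residues.
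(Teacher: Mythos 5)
Your proposal is correct and follows essentially the same route as the paper: the same three-way splitting of the commutator with $\DD^2-\mu\DD-|u|^{2\sigma}$, the same use of $[\DD,(-\DD)^{-1}]=0$ and $\DD(-\DD)^{-1}=-\mathds{1}$ to collapse the principal part, the identity \eqref{eq:mastercutoff} to recognize $4\,\MM_R[u]$, and derivative/support counting on $\nabla^j\psi_R$ combined with Hardy/weak-Young bounds and mass conservation for the $\cO(1)$ and $\cO(|\mu|R^2)$ remainders. The only organizational difference is that the paper expands $[\DD^2, i\Psi_R]=i(Z-Z^*)$ with $Z=[\DD^2,\pt_k\psi_R](-\DD)^{-1}\pt_k\psi_R$, so the smoothing factor $(-\DD)^{-1}\pt_k\psi_R$ always acts directly on $u$ and every remainder is manifestly bounded by $\|u\|_{L^2}^2$ alone, whereas in your ordering $\DD[\DD,A_R]+[\DD,A_R]\DD$ a few remainder terms initially carry $\nabla u$ (e.g.\ $\pt_k\psi_R(-\DD)^{-1}(\pt^2_{kl}\psi_R)\pt_l u$ paired against $\DD u$) and require one further integration by parts before mass conservation applies — routine, but worth flagging since the uniformity in $t$ of the $\cO(1)$ term is the whole point of the lemma.
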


\begin{remark*}
{\em In Lemma \ref{lem:NR} below, we will derive estimates that will in particular show that $\NN_R[u]$ is finite for $u \in H^2(\R^d)$.}
\end{remark*}

\begin{proof} For notational convenience, we define the pseudo-differential operator
$$
\Psi_R := \partial_k \psi_R (-\DD)^{-1} \partial_k \psi_R,
$$
which corresponds to a localized version of the Riesz potential $(-\DD)^{-1}$. 

We divide the proof of Lemma \ref{lem:VR} into several steps as follows.

\medskip
{\bf Step 1 (Regularity and Preliminaries).} By using that $i \partial_t u = \DD^2 u -\mu \DD u - |u|^{2 \sigma} u$, a simple computation  yields
\be \label{eq:VR}
\frac{d}{dt} \VV_R[u(t)]  =  \LL_R^{(1)}[u(t)] + \LL_R^{(2)}[u(t)] + \NN_R[u(t)]
\ee
with
\be
\LL_R^{(1)}[u(t)]:= \left \langle u, [\DD^2, i \Psi_R] u \right \rangle, \quad \LL_R^{(2)}[u] :=  - \mu \left \langle u, [\DD, i \Psi_R] u \right \rangle,
\ee 
\be
\NN_R[u] := - \left \langle u, [|u|^{2 \sigma}, i \Psi_R] u \right \rangle.
\ee
Note that all expressions involved here are well-defined due to the smoothing properties of the pseudo-differential operator $\Psi_R$. For instance, since $\DD^2 u \in H^{-2}(\R^d)$, we see that $\LL_R^{(1)}[u]$ is finite provided that $\Psi_R (u)$ belongs to $H^2(\R^d)$. To see this, we first note that, by Sobolev inequalities and the fact that $\nabla \psi_R$ is bounded and  compactly supported,
\be
\| \nabla \psi_R u \|_{L^r} \lesssim C(\psi_R) \| u \|_{H^2}
\ee
for $r \in [1, \infty]$ if $d=3$, $r \in [1, \infty)$ if $d=4$, and $r \in [1, \frac{2d}{d-4}]$ if $d \geq 5$. Thus, by the weak Young inequality, we deduce
\be \label{ineq:weak1}
\| (-\DD)^{-1} (\nabla \psi_R u) \|_{L^q} \lesssim C(\psi_R) \| u \|_{H^2},
\ee
for $q \in (\frac{d}{d-2}, \infty)$ if $3 \leq d \leq 8$ and $q \in (\frac{d}{d-2}, \frac{2d}{d-8})$ if $d \geq 9$. Likewise and using the Mikhlin multiplier theorem, we conclude
\be \label{ineq:weak2}
\| \nabla (-\DD)^{-1} (\nabla \psi_R u) \|_{L^q} \lesssim \| |\nabla|^{-1} (\nabla \psi_R u ) \|_{L^q} \lesssim C(\psi_R) \| u \|_{H^2},
\ee
for $q \in (\frac{d}{d-1}, \infty)$ if $3 \leq d \leq 6$ and $q \in (\frac{d}{d-1}, \frac{2d}{d-6})$ if $d \geq 7$.  We now use \eqref{ineq:weak1} and \eqref{ineq:weak2} to find that
\begin{align*}
 \| \Psi_R(u)  \|_{H^2} & \lesssim \| (\DD \nabla \psi_R) (-\DD)^{-1} ( \nabla \psi_R u ) \|_{L^2} + \| (\nabla^2 \psi_R ) \cdot \nabla (-\DD)^{-1} (\nabla \psi_R u) \|_{L^2} \\
 & \quad + \| \nabla \psi_R \cdot \DD    (-\DD)^{-1} ( \nabla \psi_R u )   \|_{L^2} + \| \nabla \psi_R \cdot (-\DD)^{-1} (\nabla \psi_R u) \|_{L^2}\\
 & \lesssim \| \DD \nabla \psi_R \|_{L^{p_1}} \| (-\DD)^{-1} (\nabla \psi_R u ) \|_{L^{q_1}}  + \| \nabla^2 \psi_R \|_{L^{p_2}} \| |\nabla|^{-1} ( \nabla \psi_R u) \|_{L^{q_2}} \\
 & \quad + \| \nabla \psi_R \|_{L^\infty}^2 \| u \|_{L^2} + \| \nabla \psi_R \|_{L^{p_3}} \| \| (-\DD)^{-1} (\nabla \psi_R u ) \|_{L^{q_3}} \\
 & \lesssim C(\psi_R) \| u \|_{H^2},
\end{align*}
where $1/p_i + 1/q_i = 1/2$ for $i=1,2,3$. We readily verify that $(p_i, q_i)=(4,4)$ for $i=1,2,3$ is an admissible choice when $3 \leq d \leq 4$. For dimensions $d \geq 5$, we can take $(p_i, q_i)=(\infty, 2)$ for $i=1,2,3$.

By following similar arguments as above, we see that the remaining terms in \eqref{eq:VR} are well-defined for $u \in H^2(\R^d)$. We omit the details.

\medskip
{\bf Step 2 (Analysis of $\LL_R^{(1)}$).} We now discuss the term $\LL_{R}^{(1)}$ appearing on the right side in \eqref{eq:VR}. Using that $[A,BC] = [A,B]C + B[A,C]$ and $[\DD^2, (-\DD)^{-1}] = 0$, we note
$$
[\DD^2, i \Psi_R]  = i [\DD^2, \pt_k \psi_R] (-\DD)^{-1} \pt_k \psi_R + i \pt_k \psi_R (-\DD)^{-1} [\DD^2, \pt_k \psi_R] =: i( Z - Z^*)
$$
where we set $Z:= [\DD^2, \pt_k \psi_R] (-\DD)^{-1} \pt_k \psi_R$. Next, by iterating with the identity $[AB, C] = A[B,C] + [A,C] B$, we obtain that
\begin{align*}
[\DD^2, \pt_k \psi_R] & = \DD [ \DD, \pt_k \psi_R] + [\DD, \pt_k \psi_R] \DD = 2 [ \DD, \pt_k \psi_R] \DD + [ \DD, [\DD, \pt_k \psi_R]] \\
& = 2 \left ( \pt_l [ \pt_l, \pt_k \psi_R] + [ \pt_l, \pt_k \psi_R] \pt_l \right ) \DD + [\DD, [\DD, \pt_k \psi_R]] \\
& = 2 \left ( 2 \pt_l [ \pt_l, \pt_k \psi_R] + [ [ \pt_l, \pt_k \psi_R], \pt_l ] \right ) \DD + [\DD, [\DD, \pt_k \psi_R]] \\
& = 4 \pt_l (\pt^2_{kl} \psi_R) \DD - 2 (  \DD \pt_k \psi_R) \DD + [\DD, [\DD, \pt_k \psi_R]].
\end{align*}
We proceed to study the last term on the right side. Here we observe that 
$$
[\DD, \pt_k \psi_R] = \pt_l [ \pt_l, \pt_k \psi_R] + [\pt_l, \pt_k \psi_R] \pt_l = \pt_l (  \pt^2_{kl} \psi_R) + (\pt^2_{kl} \psi_R) \pt_l.
$$
If we apply identity \eqref{eq:comm2} with $\pt_k \psi_R$ instead of $\phi_R$, we find
\begin{align*}
[\DD, [\DD, \pt_k \psi_R]] & = 4 \partial_l ( \partial_{klm}^3 \psi_R) \pt_m + \DD^2 \pt_k \psi_R \\
& = 4 ( \partial_{klm}^3 \psi_R) \partial^2_{lm} + 4 [ \pt_l, (\partial^3_{klm} \psi_R) \pt_m] + \DD^2 \pt_k \psi_R  \\
& =  4 ( \partial_{klm}^3 \psi_R) \partial^2_{lm}  + 4 (  \DD \pt^2_{km} \psi_R) \pt_m + \DD^2 \pt_k \psi_R.
\end{align*}
Next, we use $-\DD (-\DD)^{-1} = \mathds{1}$ and combine the identities above to conclude that
\be
\begin{aligned}
Z & = [\DD^2, \pt_k \psi_R] (-\DD)^{-1} \pt_k \psi_R \\
\phantom{Z} & = -4  \pt_l (\pt^2_{kl} \psi_R) \pt_k \psi_R + 2 (\DD \pt_k \psi_R) \pt_k \psi_R + 4 (\pt^3_{klm} \psi_R) \pt_{lm}^2 (-\DD)^{-1} \pt_k \psi_R \\
\phantom{Z} & \quad + 4 ( \DD \pt^2_{km} \psi_R) \pt_m (-\DD)^{-1} \pt_k \psi_R +  (\DD^2 \pt_k \psi_R) (-\DD)^{-1} \pt_k \psi_R.
\end{aligned}
\ee
By plugging this into $\LL^{(1)}_R[u]= \langle u, i(Z-Z^*) u \rangle = -2 \,\mathrm{Im} \, \langle u, Z u \rangle$ and recalling the identity \eqref{eq:mastercutoff}, an integration by parts for the top order term $4 i \pt_l (\pt^2_{kl} \psi_R) (\pt_k \psi_R)$ yields
\begin{align*}
\LL^{(1)}_R[u] = 4 \, \MM_R[u] - \sum_{\nu=1}^4 \RR_\nu[u],
\end{align*}
with the remainder terms
%$$
% \RR_1[u] := 4 \, \mathrm{Im} \int_{\R^d} \overline{u}\,  (\DD \pt_k \psi_R) \, \pt_k \psi_R u ,
%\quad
%   \RR_2[u] := 8 \, \mathrm{Im} \int_{\R^d} \overline{u} \, (\pt^3_{klm} \psi_R) \, \pt^2_{lm} (-\DD)^{-1} \pt_k \psi_R u, 
%$$

$$
\RR_1[u] := 4\,  \mathrm{Im} \int_{\R^d} (\DD \pt_k \psi_R) (\pt_k \psi_R) |u|^2 ,
~
\RR_2[u] := 8 \, \mathrm{Im} \int_{\R^d} \overline{u} (\pt^3_{klm} \psi_R) \pt^2_{lm} (-\DD)^{-1} (\pt_k \psi_R) u,
$$

$$
\RR_3[u] := 8\, \mathrm{Im} \int_{\R^d} \overline{u} \, (\DD \pt^2_{km} \psi_R) \pt_m (-\DD)^{-1} (\pt_k \psi_R) u,
$$
$$
\RR_4[u]:= 2 \, \mathrm{Im} \, \int_{\R^d} \overline{u} \, (\DD^2 \pt_k \psi_R) (-\DD)^{-1} (\pt_k \psi_R) u.
$$

%$$
%\RR_3[u] := 8\, \mathrm{Im} \int_{\R^d} \overline{u} \, (\DD \pt^2_{km} \psi_R) \, \pt_m (-\DD)^{-1} \pt_k \psi_R u, 
%$$
%$$
%\RR_4[u]:= {\color{red}+}2 \, \mathrm{Im} \, \int_{\R^d} \overline{u} \, (\DD^2 \pt_k \psi_R) (-\DD)^{-1} \pt_k \psi_R u.
%$$
As a next step, we claim that
\be \label{ineq:RR}
\left |\RR_\nu[u] \right | \lesssim \| u \|_{L^2}^2 = \cO ( 1 ).
\ee
for $\nu=1, \ldots, 4$. Indeed, we first note that
$$
\RR_1[u] = 0,
$$
since the integrand is real-valued. To estimate $\RR_2[u]$, we use $\| \partial_{lm}^2 (-\DD)^{-1} f \|_{L^2} \lesssim \| f \|_{L^2}$ and the Cauchy-Schwarz inequality to get
$$
| \RR_2[u] | \lesssim \| \pt^3_{klm} \psi_R \|_{L^\infty} \| \pt_k \psi_R \|_{L^\infty}  \|u \|_{L^2}^2 \lesssim R^{-1} \cdot R \|u\|_{L^2}^2 \lesssim \|u\|_{L^2}^2.
$$
Next, we use that $\| \pt_m (-\DD)^{-1} f \|_{L^2} \lesssim \| (-\DD)^{-\frac 1 2} f \|_{L^2}$ together with the Cauchy-Schwarz and the weak Young inequalities. This gives us
\begin{align*}
| \RR_3[u] | & \lesssim \| (\DD \pt^2_{km} \psi_R) u \|_{L^2} \| (-\DD)^{-\frac 1 2} ( \pt_k \psi_R u ) \|_{L^2}  \lesssim \| \DD \pt^2_{km} \psi_R \|_{L^\infty} \| u \|_{L^2} \| \pt_k \psi_R u \|_{L^{\frac{2d}{d+2}}} \\
& \lesssim  \| \DD \pt^2_{km} \psi_R \|_{L^\infty} \| \pt_k \psi_R \|_{L^d} \| u \|_{L^2}^2 \lesssim R^{-2} \cdot R^2 \| u \|_{L^2}^2 \lesssim \| u \|_{L^2}^2,
\end{align*}
using that $\| \pt_k \psi_R \|_{L^d} \lesssim R \cdot \left | \{ |x| \leq 10R \} \right |^{\frac 1 d} \lesssim R^2$ thanks to \eqref{eq:psi1}. Finally, we note $(-\DD)^{-1} = (-\DD)^{-\frac 1 2} (-\DD)^{- \frac 1 2}$ and apply the weak Young inequality once again to find that
\begin{align*}
| \RR_4[u] | & \lesssim \| (-\DD)^{-\frac 1 2} (\DD^2 \pt_k \psi_R) u \|_{L^2} \| (-\DD)^{-\frac 1 2} (\pt_k \psi_R u) \|_{L^2} \\
& \lesssim \| (\DD^2 \pt_k \psi_R) u \|_{L^{\frac{2d}{d+2}}} \| \pt_k \psi_R u \|_{L^{\frac{2d}{d+2}}} \lesssim \| \DD^2 \pt_k \psi_R \|_{L^d} \| \pt_k \psi_R \|_{L^d} \| u \|_{L^2}^2 \\
& \lesssim R^{-2} \cdot R^2 \| u \|_{L^2}^2 \lesssim \|u \|_{L^2}^2, 
\end{align*}
since we have $\| \DD^2 \pt_k \psi_R \|_{L^d} \lesssim R^{-3} \cdot \left | \{ |x| \leq 10R \} \right |^{\frac 1 d} \lesssim R^{-2}$ by \eqref{eq:psi1} and $\| \pt_k \psi_R \|_{L^d} \lesssim R^2$ as shown above. This completes the proof of estimate \eqref{ineq:RR}.

\medskip
{\bf Step 3 (Analysis of $\LL_R^{(2)}$).} Let us now turn to term $\LL_R^{(2)}$ arising from the commutator of $\Psi_R$ with the lower-order dispersion. By using that $[\DD, (-\DD)^{-1}] = 0$ and $[A,BC] = [A,B]C + B[A,C]$, we calculate
$$
[\DD, i \Psi_R] = i[\DD, \pt_k \psi_R] (-\DD)^{-1} \pt_k \psi_R + i \pt_k \psi_R (-\DD)^{-1} [\DD, \pt_k \psi_R] =: i ( \tilde{Z} - \tilde{Z}^*)
$$
with $\tilde{Z} :=  [\DD, \pt_k \psi_R] (-\DD)^{-1} \pt_k \psi_R$. We proceed by noticing that 
\begin{align*}
[\DD, \pt_k \psi_R] & = [\pt_l, \pt_k \psi_R] \pt_l + \pt_l [\pt_l, \pt_k \psi_R] \\
& = 2 [\pt_l, \pt_k \psi_R] \pt_l + [ \pt_l, [\pt_l, \pt_k \psi_R]] = 2 ( \pt^{2}_{kl} \psi_R) \pt_l + \DD \pt_k \psi_R.
\end{align*}
Since $\LL_R^{(2)}[u] = -\mu \langle u, i (\tilde{Z} - \tilde{Z}^*) u \rangle = 2 \mu \, \mathrm{Im} \, \langle u, \tilde{Z} u \rangle$, we obtain
$$
\LL_R^{(2)}[u] = \tilde{\RR}_1[u] + \tilde{\RR}_2[u]
$$
with
$$
\tilde{\RR}_1[u] = 4\mu \, \mathrm{Im} \int_{\R^d} \overline{u} (\pt^2_{kl} \psi_R) \pt_l (-\DD)^{-1} \pt_k \psi_R u ,
$$
$$
\tilde{\RR}_2[u] = 2 \mu \, \mathrm{Im} \int_{\R^d} \overline{u} (\DD \pt_k \psi_R) (-\DD)^{-1} \pt_k \psi_R u.
$$
Next, we claim that
\be \label{ineq:RR2}
|\tilde{\RR}_\nu[u] | \lesssim  |\mu| R^2 \| u \|_{L^2}^2 = \cO( |\mu| R^2 ) 
\ee
for $\nu=1,2$. To see this, we use that $\| \pt_l (-\DD)^{-1} f \|_{L^2} \lesssim \| (-\DD)^{-\frac 1 2} f \|_{L^2}$ and apply the Cauchy-Schwarz and weak Young inequalites to deduce
\begin{align*}
|\tilde{\RR}_1[u]| & \lesssim |\mu| \|\pt^2_{kl} \psi_R\, u \|_{L^2} \| (-\DD)^{-\frac 1 2} ( \pt_k \psi_R\, u) \|_{L^2} \lesssim |\mu| \| \pt^2_{kl} \psi_R \|_{L^\infty} \|u\|_{L^2} \| \pt_k \psi_R\, u \|_{L^{\frac{2d}{d+2}}} \\
& \lesssim |\mu| \| \pt^2_{kl} \psi_R \|_{L^\infty} \| \pt_k  \psi_R\|_{L^d} \| u \|_{L^2}^2 \lesssim |\mu| R^2 \| u \|_{L^2}^2, 
\end{align*}
since $\| \pt^2_{kl} \psi_R \|_{L^\infty} \lesssim 1$ and $\| \pt_k \psi_R \|_{L^d} \lesssim R^2$. Next, by writing $(-\DD)^{-1} = (-\DD)^{-\frac 1 2} (-\DD)^{- \frac 1 2}$ again, another application of the weak Young inequality likewise yields that
\begin{align*}
|\tilde{\RR}_2[u] | & \lesssim |\mu| \| (-\DD)^{- \frac 1 2} ( (\DD \pt_k \psi_R) u) \|_{L^2} \| (-\DD)^{-\frac 1 2} (\pt_k \psi_R u) \|_{L^2} \\
& \lesssim |\mu| \| (\DD \pt_k \psi_R) u \|_{L^{\frac{2d}{d+2}}} \| \pt_k \psi_R u \|_{L^{\frac{2d}{d+2}}}  \lesssim |\mu| \| \DD \pt_k \psi_R \|_{L^d} \| \pt_k \psi_R \|_{L^d} \| u \|_{L^2}^2 \\
& \lesssim |\mu| R^2 \| u \|_{L^2}^2,
\end{align*}
using the bounds $\| \DD \pt_k \psi_R \|_{L^d} \lesssim 1$ and $\| \pt_k \psi_R \|_{L^d} \lesssim R^2$.  This shows that \eqref{ineq:RR2} holds.

The proof of Lemma \ref{lem:VR} is now complete. 
\end{proof}

Next, we prove the following bounds for the nonlinear commutator term $\NN_R[u]$ in the class of radial functions.

\begin{lemma}[Bounds for $\NN_R$] \label{lem:NR}
Let $d \geq 3$. Suppose $\frac{4}{d} \leq \sigma < \sigma_*$ and define $\delta = d \sigma -4 \geq 0$. For any radial function $u \in H^2(\R^d)$, it holds that
$$
|\NN_R[u] | \lesssim C_\eps (\| u \|_{L^2}) R^a \| \DD u \|_{L^2}^{\frac{1}{2} ( \delta + a )}  ,
$$
where
$$
a = \begin{dcases*}  \eps + a_0(d, \sigma) & for $d \geq 6$,\\
\frac{6-d}{2} & for $3 \leq d \leq 5$, \end{dcases*}
$$
with any  $0 < \eps < 2$ and
$$
a_0(d, \sigma) = \begin{dcases*} 0 & for $d \geq 8$,\\
0 & for $d=7$ and $\sigma \in [\frac{7}{12}, \sigma_*)$,\\
0 & for $d=6$ and $\sigma \in [\frac{7}{10}, \sigma*)$, \end{dcases*} 
$$
and 
$$ 
 a_0(d, \sigma) =  \begin{dcases*}  \frac{7}{12} (7-12 \sigma) & for $d =7$ and $\sigma \in [\frac{4}{7}, \frac{7}{12})$, \\
\frac{3}{5} (7-10 \sigma) & for $d = 6$ and $\sigma \in [\frac{2}{3}, \frac{7}{10})$. 
 \end{dcases*}
$$
\end{lemma}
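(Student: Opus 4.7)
My plan is to reduce the commutator defining $\NN_R[u]$ to a manifestly real integral, then estimate that integral by H\"older, Hardy-Littlewood-Sobolev for the Riesz factor $(-\Delta)^{-1}$, and a careful control of the nonlinearity $|u|^{2\sigma}$ that combines Sobolev embedding near the origin with the radial Strauss inequality away from it.

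The first step is purely algebraic. Since $|u|^{2\sigma}$ and $\partial_k \psi_R$ are both multiplication operators they commute, so
$$
[|u|^{2\sigma},\, \partial_k \psi_R (-\Delta)^{-1} \partial_k \psi_R] = \partial_k \psi_R\, [|u|^{2\sigma}, (-\Delta)^{-1}]\, \partial_k \psi_R.
$$
The inner commutator is anti-Hermitian and $\partial_k \psi_R$ is real, so unfolding $\NN_R[u]=-i\langle u,[|u|^{2\sigma},\Psi_R]u\rangle$ and using self-adjointness of $(-\Delta)^{-1}$ gives the real expression
$$
\NN_R[u] \;=\; 2\, \mathrm{Im}\, \sum_{k=1}^d \int_{\R^d} |u|^{2\sigma}\, \overline{(\partial_k \psi_R\, u)}\, (-\Delta)^{-1}(\partial_k \psi_R\, u)\, dx,
$$
from which $|\NN_R[u]| \lesssim \sum_k \int |u|^{2\sigma}\, |\partial_k \psi_R\, u|\,|(-\Delta)^{-1}(\partial_k \psi_R\, u)| \, dx$. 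I then apply H\"older with three factors and HLS on the last, using $\|(-\Delta)^{-1} f\|_{L^r} \lesssim \|f\|_{L^s}$ whenever $\tfrac{1}{s}-\tfrac{1}{r}=\tfrac{2}{d}$ with $1<s<r<\infty$. In dimensions $d\geq 5$ one can take $s=2$ and $r=\tfrac{2d}{d-4}$, but for $d=3,4$ the endpoint $s=2$ is forbidden; there I take $s>1$ and absorb the mismatch through the finite volume of the support $\{|x|\leq 10R\}$ of $\nabla \psi_R$. This volume factor already produces the large power $a=(6-d)/2$ predicted in low dimensions.

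The decisive step is controlling $|u|^{2\sigma}$. The Strauss inequality \eqref{ineq:Strauss} yields the radial decay $|u(x)|^{2\sigma}\lesssim |x|^{-\sigma(d-1)} \|u\|_{L^2}^{3\sigma/2} \|\Delta u\|_{L^2}^{\sigma/2}$, which is too singular to be used globally. My plan is to split $\{|x|\leq 10R\}$ into an inner ball $\{|x|\leq\rho\}$ and an annulus $\{\rho\leq|x|\leq 10R\}$: on the annulus I insert the pointwise Strauss bound, while on the inner ball I control $\||u|^{2\sigma}\|_{L^q(|x|\leq\rho)}$ by a volume factor $\rho^{d/q}$ times a Gagliardo-Nirenberg interpolation between $\|u\|_{L^2}$ and $\|\Delta u\|_{L^2}$. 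Optimizing $\rho$ as a suitable power of $R$ and $\|\Delta u\|_{L^2}$ reproduces the target bound $R^a \|\Delta u\|_{L^2}^{(\delta+a)/2}$, and the various explicit formulae for $a$ come from tracking which of the three constraints---the admissible HLS pair, the admissible Sobolev exponent for $H^2\hookrightarrow L^p$, and the integrability of the singularity $|x|^{-\sigma(d-1)}$ on the annulus---is binding.

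The main technical obstacle is the bookkeeping of exponents in the intermediate dimensions $d=6,7$. When $\sigma$ lies above the thresholds $\tfrac{7}{10}$ (for $d=6$) or $\tfrac{7}{12}$ (for $d=7$), the admissible HLS/Sobolev room is enough that the inner-ball estimate closes with $a=\varepsilon$ for any $\varepsilon>0$. In the remaining sub-intervals $\sigma\in[\tfrac{4}{d},\tfrac{7}{10})$ or $[\tfrac{4}{d},\tfrac{7}{12})$, the H\"older pairing of $|u|^{2\sigma}$ with the Sobolev exponent of $\partial_k \psi_R u$ is just barely insufficient, and one must pay a small correction $a_0(d,\sigma)>0$. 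The explicit values $\tfrac{7}{12}(7-12\sigma)$ and $\tfrac{3}{5}(7-10\sigma)$ arise precisely by balancing these competing constraints in the optimization of $\rho$; verifying that the three regimes patch together, and that the radial Strauss bound can be substituted cleanly inside the H\"older estimate without destroying the targeted power of $R$, will be the most delicate part of the proof.
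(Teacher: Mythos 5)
Your reduction of $\NN_R[u]$ to $2\,\mathrm{Im}\sum_k\int |u|^{2\sigma}\,\overline{(\partial_k\psi_R u)}\,(-\DD)^{-1}(\partial_k\psi_R u)$ is correct, and your general toolbox (Strauss, Gagliardo--Nirenberg, volume factors on $\{|x|\lesssim R\}$) is the right one for the low-dimensional cases. But there is a genuine gap at the heart of the plan: you replace the paper's key ingredient --- Newton's theorem for the Riesz potential of a \emph{radial} function, i.e. $(-\DD)^{-1}f(x)=|x|^{-(d-2)}\int_{|y|\le|x|}f+\int_{|y|>|x|}|y|^{-(d-2)}f$, and its consequence $|(-\DD)^{-1}f(x)|\lesssim |x|^{-(d-2)}\|f\|_{L^1}$ --- by the rotation-blind Hardy--Littlewood--Sobolev inequality. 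The whole difficulty of the lemma is that the integrand carries two powers of the weight $|\partial_k\psi_R|\lesssim |x|$ (worth $R^2$) plus the two ``derivatives'' gained by $(-\DD)^{-1}$ (worth another $R^2$ by scaling), so the naive bound is $R^4\|\DD u\|_{L^2}^{(\delta+4)/2}$. Newton's theorem eats these weights for free, because for radial data the potential at radius $|x|$ generated by the charge inside decays like $|x|^{-(d-2)}$, and for $d\ge 6$ this absorbs all four powers; that is exactly why $a_0=0$ (so $a=\eps$ arbitrarily small) is attainable in high dimensions. HLS cannot see this: with your three-factor H\"older, the only ways to neutralize the weights $|x|$ sitting on each copy of $\partial_k\psi_R u$ are to pay $R$ outright or to invoke Strauss decay $|u(x)|\lesssim|x|^{-(d-1)/2}\|u\|_{L^2}^{3/4}\|\DD u\|_{L^2}^{1/4}$, which costs a \emph{fixed} positive power of $\|\DD u\|_{L^2}$ per weight. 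By the scaling constraint $-a+2b=\delta$ this forces $a$ to stay bounded away from $0$, so you cannot reach $a=\eps$ for $d\ge 8$ (nor $a_0=0$ in the stated ranges for $d=6,7$). The paper flags this explicitly in the remark following the lemma: working only with weak Young, Strauss and GN yields strictly weaker bounds, and the small values of $b=\tfrac12(\delta+a)$ are precisely what is needed downstream (the condition $b<1$ in Theorem~\ref{thm:rate} and the rates $\nu_*$ in Theorem~\ref{thm:mass}).

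A secondary inaccuracy: you attribute the corrections $a_0(7,\sigma)=\tfrac{7}{12}(7-12\sigma)$ and $a_0(6,\sigma)=\tfrac35(7-10\sigma)$ to the optimization of an inner/outer splitting radius $\rho$. In the paper there is no such splitting; these exponents arise because, after peeling off a Strauss factor $(|y|^{(d-1)/2}|u|)^{2\gamma}$, the residual Lebesgue exponent $\beta=2\sigma+1-2\gamma$ (resp.\ $2\sigma+1-\tfrac16$ for $d=7$) drops below $2$ on the stated subintervals of $\sigma$, so the GN inequality \eqref{ineq:GNS} is unavailable and one must instead apply H\"older against the volume of $\{|y|\lesssim R\}$, producing exactly those powers of $R$. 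If you want to complete a proof along your lines, you must at minimum import the radial pointwise kernel bound \eqref{ineq:newton1} (and, for $d\ge 7$, the full two-term Newton decomposition) in place of HLS; the rest of your outline for $3\le d\le 5$ then essentially coincides with the paper's argument.
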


\begin{remarks*}
{\em  1.~By scaling arguments, it is easy to see that for the estimate $|\NN_R[u]| \lesssim C(\| u \|_{L^2}) R^{a} \| \DD u(t) \|_{L^2}^b$ to hold,
the exponents $a$ and $b$ have to satisfy the relation
\be \label{eq:ab}
-a + 2b  = \delta.
\ee
In particular, if we assume that $a \geq 0$, we get the lower bound $2b \geq  \delta$ with $\delta=d \sigma -4$. As we will see in the proof of Theorem \ref{thm:rate} below, the condition $b < 1$ naturally enters, which leads to the upper bound $\delta < 2$ meaning that $\sigma < \frac 6 d$ holds. Note that the bounds of Lemma \ref{lem:NR} will in fact impose the condition $\sigma < \min\{ \frac 3 d + \frac 1 2, \frac 6 d \}$ in order that $b < 1$ holds. Note that this extra condition on $\sigma$ becomes redundant for $d \geq 12$, since $\sigma < \sigma_*= \frac{4}{d-4}$. 

2.~The proof of Lemma \ref{lem:NR} given below will make use of Newton's theorem (in particular, we will make essential use of this fact for $d \geq 7$.). Alternatively, one could avoid making use of this special identity for $(-\DD)^{-1}$ at all and only work with the weak Young, Strauss and Gagliardo-Nirenberg inequalities at the expense of obtaining weaker bounds for $\NN_R[u]$. 
}
\end{remarks*}

\begin{proof}
First, we note that
$$
\NN_R[u] = - 2 \, \mathrm{Im} \int_{\R^d} \overline{u} \pt_k \psi_R (-\DD)^{-1} \pt_k \psi_R |u|^{2 \sigma} u .
$$
We discuss the cases $3 \leq d \leq 5$, $d = 6$, and $d \geq 7$ separately as follows.

\medskip
{\bf Case 1: $3 \leq d \leq 5$.} First, we recall the pointwise bound
\be \label{ineq:newton1}
\left | \left (  (-\DD)^{-1} f \right )  (x)  \right | \lesssim \frac{1}{|x|^{d-2}} \left ( \int_{\R^d} | f(y) | \, dy \right ) \quad \mbox{when $x \neq 0$},
\ee
for any radial function $f \in L^1(\R^d)$ and $d \geq 3$. This bound can be deduced, e.\,g., from Newton's theorem, see \cite[Theorem 9.7]{LiLo-01} and the proof given there. Alternatively, a more stable argument that yields the pointwise bound \eqref{ineq:newton1}, and can be generalized to Riesz potentials $(-\DD)^{-\alpha}$, $1/2 < \alpha <  d/2$, can be inferred from \cite[Corollary 2.3]{Du-13}. 

Applying \eqref{ineq:newton1} to the radial function $f = |\pt_k \psi_R| |u|^{2 \sigma +1} \in L^1(\R^d)$ and using that $| \pt_k \psi_R | \lesssim |x|$ and $\mathrm{supp} \, ( \pt_k \psi_R) \subset \{ |x| \lesssim R \}$  by \eqref{eq:psi1}, we deduce
$$
\left | \NN_R[u] \right | \lesssim \left (  \int_{|x| \lesssim R} \frac{|u(x)|}{|x|^{d-3}} \, dx \right ) \cdot  \left ( \int_{|y| \lesssim R} |y| |u(y)|^{2 \sigma +1} \, dy \right ) =: A \cdot B.
$$
Next, we note that
$$
A \lesssim  \| u \|_{L^2} \left \| \frac{1}{|x|^{d-3}} \right  \|_{L^2(|x| \lesssim  R)} \lesssim \| u \|_{L^2} R^{\frac{6-d}{2}},
$$
and
$$
B \lesssim \int_{\R^d}�\left ( |y|^{\frac{d-1}{2}} |u(y)| \right )^{2 \gamma} |u(y)|^{2 \sigma + 1 -2 \gamma} \, dy  \lesssim \| \Delta u \|_{L^2}^{\frac{\gamma}{2}} \| u \|_{L^\beta}^{\beta} ,
$$
where we have used the Strauss inequality and introduced the exponents
$$
\gamma :=\frac{1}{d-1} \quad \mbox{and} \quad \beta := 2 \sigma + 1 -2\gamma.
$$ 
Notice that $\beta \in [2, 2 \sigma_* + 2)$ for $\sigma \in [ \frac 4 d ,\sigma_*)$ and $3 \leq d \leq 5$. Thus we can apply the Gagliardo-Nirenberg inequality to bound $\| u \|_{L^\beta}$ in $B$, whence it follows that
$$
\left | \NN_R[u] \right |  \lesssim C(\| u \|_{L^2}) R^{\frac{6-d}{2}} \, \| \DD u \|_{L^2}^{\frac{\gamma}{2}} \| \DD u \|_{L^2}^{\frac{d}{2} (\sigma-\gamma) - \frac{d}{4}}  = C(\| u \|_{L^2}) R^{a} \| \DD u \|_{L^2}^{\frac{1}{2}(\delta +a)}  
$$
with $a = \frac{6-d}{2}$, which is the bound asserted in Lemma \ref{lem:NR} when $3 \leq d \leq 5$.

\medskip
{\bf Case 2: $d=6$.}  Let $A$ and $B$ be as above.  Since the function $|x|^{-d+3}$ does not belong to $L^2_{\mathrm{loc}}(\R^d)$ anymore for $d \geq 6$,  we modify the previous argument to control $A$ as follows. Let $\eps \in (0,2)$ and we estimate 
\be \label{ineq:A6}
A  \leq C_\eps \left \| \frac{1}{|x|^{3-\eps}} \right  \|_{L^2(|x| \lesssim R)}  \| |\nabla|^{\eps} u \|_{L^2}  \lesssim C_\eps(\| u \|_{L^2}) R^{\eps} \| \DD u \|_{L^2}^{\frac{\eps}{2}} 
\ee 
where we used the Hardy-type inequality $\| |x|^{-\eps} u \|_{L^2} \leq C_\eps \| |\nabla|^{\eps} u \|_{L^2}$ and $\| |\nabla|^\eps u \|_{L^2} \leq \| u \|_{L^2}^{1 - \frac{\eps}{2}} \| \DD u \|_{L^2}^{\frac{\eps}{2}}$ for $\eps \in (0,2)$. 

Next, we let $\gamma=\frac{1}{d-1}=\frac{1}{5}$ and $\beta = 2 \sigma + 1- 2\gamma$ as above. However, we notice that 
$$
\beta = 2 \sigma+1-2 \gamma < 2 \quad \mbox{if $\frac{2}{3} \leq \sigma < \frac{7}{10}$},
$$
Thus we cannot make use of the Gagliardo-Nirenberg inequality \eqref{ineq:GNS} to control $B$ when $\sigma \in [\frac 2 3, \frac{7}{10})$. Instead, by making use of the Strauss inequality and H\"older's inequality, we obtain
\begin{align*}
B & = \int_{|y| \lesssim R} \left ( |y|^\frac{d-1}{2} |u| \right )^{2 \gamma} |u(y)|^{\beta} \, dy  \lesssim C( \| u \|_{L^2})  \left ( \int_{|y| \lesssim R} 1 \, dy \right )^{\frac{7-10\sigma}{10}}   \| \DD u \|_{L^2}^{\frac{1}{10}}  \\
& \lesssim C(\| u \|_{L^2})R^{\frac{3(7-10 \sigma)}{5}} \| \DD u \|_{L^2}^{\frac{1}{10}},
\end{align*}
provided that $\sigma \in [\frac{2}{3}, \frac{7}{10})$. In view of \eqref{ineq:A6}, we deduce the claimed bound for $\NN_R[u]$ when $d=6$ and $\sigma \in [\frac 2 3, \frac{7}{10})$. 

Let us now consider $\sigma \in [\frac{7}{10}, \sigma_*)$. In this case, we have $\beta \in [2, 2 \sigma_\star + 2)$ and hence we can use the GN-inequality to bound $\| u \|_{L^\beta}^\beta$  and we obtain
$$
\left | \NN_R[u] \right | \lesssim A \cdot B \lesssim C(\| u \|_{L^2}, \eps) R^\eps \| \DD u \|_{L^2}^{\frac{1}{2}(\delta + \eps)}. 
$$
This completes the proof of Lemma \ref{lem:NR} for $d=6$.

\medskip
{\bf Case 3: $d \geq 7$.} We shall now fully exploit Newton's theorem, which states that
\be \label{eq:newtonfull}
((-\DD)^{-1} f)(x) = \frac{1}{|x|^{d-2}} \int_{|y| \leq |x|} f(y) \, dy + \int_{|x| < |y|} \frac{f(y)}{|y|^{d-2}} \, dy \quad \mbox{for $x \neq 0$},
\ee
for any radial $f \in L^1(\R^d, \langle x \rangle^{2-d} dx)$ and $d \geq 3$; see, e.\,g.,\cite[Theorem 9.7]{LiLo-01} and the proof given there. By making use of \eqref{eq:newtonfull} with the radial function $f = |\pt_k \psi_R||u|^{2 \sigma+1}$, and the estimate $| \pt_k \psi_R | \lesssim |x|$ we deduce that
\be
\begin{aligned} \label{ineq:NNnewton}
& \left | \NN_R[u] \right | \lesssim \int_{|x| \lesssim R} |u(x)| |x| \left ( \frac{1}{|x|^{d-2}} \int_{|y| \leq |x| \lesssim R} |y| |u(y)|^{2 \sigma +1} \, dy \right . \\
& \phantom{\left | \NN_R[u] \right | \lesssim \int_{|x| \lesssim R} |u(x)| |x| \left ( \frac{1}{|x|^{d-2}} \right .}  \left . + \int_{|x| < |y| \lesssim R} \frac{1}{|y|^{d-3}}  |u(y)|^{2 \sigma +1} \, dy \right ) dx .
\end{aligned}
\ee
Let us first assume that $d \geq 8$ holds. In this case, we observe that
$$
 \left \| y \right \|_{L^p(|y| \leq |x| \lesssim R)} \leq \| y \|_{L^p(|y| \leq |x|)} \lesssim |x|^{1+ \frac{d}{p}} \quad \mbox{for $p \in [1,\infty]$}, 
$$
$$
 \left \| \frac{1}{|y|^{d-3}} \right \|_{L^p(|x|< |y| \lesssim R)} \leq \left \| \frac{1}{|y|^{d-3}} \right \|_{L^p(|x|< |y|)} \lesssim |x|^{-(d-3)+\frac{d}{p}} \quad \mbox{for $p \in (\frac{d}{d-3}, \infty]$}. 
$$ 
Using these bounds with with $p=\infty$ for $d=8$ and with $p= \frac{2d}{d-8}$ for $d>8$, we apply H\"older's inequality to \eqref{ineq:NNnewton} and find that
\begin{align*}
\left | \NN_R[u] \right | & \lesssim \left ( \int_{|x| \lesssim R} \frac{|u(x)|}{|x|^{d/2}} \, dx \right ) \cdot \left \| u \right \|_{L^{\frac{2d}{d+8} \cdot (2 \sigma+1)}}^{2\sigma+1} \lesssim C_\eps(\| u \|_{L^2}) R^\eps \| |\nabla|^\eps u \|_{L^2} \| \DD u \|^{\frac{\delta}{2}} \\
& \lesssim C_\eps(\| u \|_{L^2}) R^\eps \| \DD u \|_{L^2}^{\frac{1}{2}(\delta+\eps)},
\end{align*}
for any $0 < \eps < 2$. Note we used the Hardy-type inequality $\| |x|^{-\eps} u \|_{L^2} \lesssim C_\eps \| |\nabla|^{\eps} u \|_{L^2} \lesssim C_\eps(\| u \|_{L^2}) \| \DD u \|_{L^2}^\epsilon$ to estimate the compactly supported integral above. Notice also that in the second inequality above we used the GN-inequality, which is applicable here due to the fact that $\frac{2d}{d+8}(2 \sigma+1) \in [2, 2 \sigma_* +2)$ holds for $\sigma \in [\frac{4}{d}, \sigma_*)$, as one easily checks.

It remains to discuss the case $d=7$. Here we have to modify the previous arguments with the use of the Strauss inequality as follows: Going back to \eqref{ineq:NNnewton} and splitting $|y| = |y|^{\frac 1 2} |y|^{\frac 1 2}$, we find that
\be \label{ineq:d7}
\begin{aligned}
& \left | \NN_R[u] \right |  \lesssim \left ( \int_{|x| \lesssim R} \frac{|u(x)|}{|x|^{7/2}} \, dx \right ) \cdot \left ( \int_{|y| \lesssim R} |y|^{\frac 1 2} |u(y)|^{2 \sigma + 1} \, dy \right ) \\
& \phantom{\left | \NN_R[u] \right |}  \lesssim C_\eps(\| u \|_{L^2}) R^\eps \| \DD u \|_{L^2}^{\frac{\eps}{2}} \| \DD u \|_{L^2}^{\frac{1}{24}} \left ( \int_{|y| \lesssim R} |u(y)|^{2 \sigma + 1 - \frac{1}{6}}  \, dy \right ) .
\end{aligned}
\ee
Now, we note that $2\sigma +1 - \frac{1}{6} < 2$ for $\sigma \in [\frac{4}{7}, \frac{7}{12})$. Therefore, in this range of $\sigma$, we use H\"older's inequality and the compact support to get the bound
\be \label{ineq:d7_2}
 \int_{|y| \lesssim R} |u(y)|^{2 \sigma + 1 - \frac{1}{6}}  \, dy  \leq \left ( \int_{|y| \lesssim R} 1 \, dy \right)^{\frac{7-12 \sigma}{12}} \| u \|_{L^2} \lesssim R^{\frac{7(7-12\sigma)}{12}} \| u \|_{L^2},
\ee
provided that $\sigma \in [\frac{4}{7}, \frac{7}{12})$. Furthermore, from the GN-inequality \eqref{ineq:GNS} we obtain
\be \label{ineq:d7_3}
\int_{|y| \lesssim R}  |u(y)|^{2 \sigma + 1 - \frac{1}{6}} \, dy \lesssim C(\| u \|_{L^2}) \| \DD u \|_{L^2}^{\frac{d(12 \sigma-7)}{24}}
\ee 
when $\sigma \in [\frac{7}{12}, \sigma_*)$. If we plug the bounds \eqref{ineq:d7_2} and \eqref{ineq:d7_3} into \eqref{ineq:d7}, we obtain the claimed bounds for $\NN_R[u]$ for $d = 7$. 

The proof of Lemma \ref{lem:NR} is now complete.
\end{proof}

We conclude this section by showing a space-time bound for $\NN_R[u]$ for mass-supercritical exponents $\sigma > \frac{4}{d}$, which will be essential in the proof of Theorem \ref{thm:rate} in the next section.

\begin{lemma}[Space-Time Bounds for $\mathcal{N}_R$] \label{lem:NR2}
Let $d \geq 3$, $\frac 4 d < \sigma < \sigma_*$, and define $\delta = d\sigma-4>0$. Suppose $u \in C^0([0,T); H^2(\R^d))$ is radial. Let $a>0$ be as in Lemma \ref{lem:NR} and assume that
$$
b:= \frac{1}{2} (\delta + a) < 1.
$$
Furthermore, we define
$$
I(t_1, t_0):= \int_{t_0}^{t_1} (t_1 -t) \| \DD u (t) \|_{L^2}^2 \, dt.
$$
for $[t_0,t_1] \subset [0, T)$. Then we have
$$
\int_{t_1}^{t_0} | \NN_R[u(t)] | \, dt \leq C(u_0) \left ( \frac{(t_1-t_0)^2}{\eta^2 R^{4/\alpha}} +   R^{2  a- \frac{4}{\alpha} (1-b)} + \eta I(t_1,t_0) \right )
$$
for any $\eta > 0$ and the exponent
$$
\alpha = \frac{\sigma(d-1)}{4- \sigma}.
$$
\end{lemma}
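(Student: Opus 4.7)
\smallskip
\noindent\textbf{Proof plan.} The plan is to reduce the space-time bound to a purely ``scalar'' inequality and then absorb the nonlinear growth using weighted Young inequalities. The starting point is the pointwise estimate from Lemma~\ref{lem:NR}, which gives
$$
|\NN_R[u(t)]| \leq C(u_0)\, R^a\, K(t)^b, \qquad K(t) := \|\Delta u(t)\|_{L^2},
$$
with $b = \frac{1}{2}(\delta + a) < 1$ by assumption. The key observation is that the factor $(t_1-t)$ in the definition of $I(t_1,t_0)$ makes the weight $(t_1-t)^{-b/(2-b)}$ integrable near $t_1$ precisely because $b < 1$; this opens the door to a weighted Young inequality that extracts an $\eta I$ term.

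\smallskip
\noindent First, I would perform a per-time split via Young's inequality of the form $xy \leq \eta\, x^p/p + C\,\eta^{-q/p}\, y^q/q$ with $(p,q) = (2/b, 2/(2-b))$, applied to $x = K(t)(t_1-t)^{1/2}$ and $y = R^{a}(t_1-t)^{-b/2}$, which gives
$$
R^{a} K(t)^{b} \;\leq\; \eta\,(t_1-t)\,K(t)^{2} \;+\; C\,\eta^{-b/(2-b)}\,R^{2a/(2-b)}\,(t_1-t)^{-b/(2-b)}.
$$
Integrating over $[t_0,t_1]$ and using $\int_{t_0}^{t_1}(t_1-t)^{-b/(2-b)}\,dt \lesssim (t_1-t_0)^{2(1-b)/(2-b)}$, this produces the intermediate bound
$$
\int_{t_0}^{t_1} |\NN_R[u(t)]|\,dt \;\leq\; C\,\eta\,I(t_1,t_0) \;+\; C\,\eta^{-b/(2-b)}\,R^{2a/(2-b)}\,(t_1-t_0)^{2(1-b)/(2-b)}.
$$

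\smallskip
\noindent Second, I would split the residual product by a further Young/AM-GM interpolation of the form $X^{\lambda}Y^{1-\lambda} \leq \lambda X + (1-\lambda)Y$ applied to $X = (t_1-t_0)^{2}/(\eta^{2}R^{4/\alpha})$ and $Y = R^{2a - 4(1-b)/\alpha}$. The specific definition $\alpha = \sigma(d-1)/(4-\sigma)$ is what makes the exponents of $\eta$, $R$, and $(t_1-t_0)$ in $X^{\lambda}Y^{1-\lambda}$ add up consistently with those in the intermediate bound (after possibly rescaling the parameter $\eta$ appearing in the first Young), so that the residual is dominated by the sum $X+Y$. This is what produces the two target terms $(t_1-t_0)^{2}/(\eta^{2}R^{4/\alpha})$ and $R^{2a-4(1-b)/\alpha}$.

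\smallskip
\noindent The main obstacle is precisely this last balancing step: one must track very carefully how the reciprocal exponent $\alpha$, which encodes the radial Strauss information that fed into $a$ in Lemma~\ref{lem:NR}, meshes with the Young exponents $b/(2-b)$ and $2(1-b)/(2-b)$ produced by the first split. If one chose a different interpolation, one would recover a weaker two-term bound that is insufficient to run the Merle--Rapha\"el--Szeftel-type scheme in the proof of Theorem~\ref{thm:rate}; the three-term form stated here is tailored so that, upon the subsequent choice $R = R(\eta, t_1-t_0)$ used in that proof, each of the three contributions can be absorbed at an appropriate stage of the bootstrap.
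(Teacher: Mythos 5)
Your argument is essentially the paper's proof: the paper applies H\"older in $t$ to $\int_{t_0}^{t_1}R^a\|\Delta u\|_{L^2}^b\,dt$ with exponents $\bigl(\tfrac{2}{2-b},\tfrac{2}{b}\bigr)$ and the weight $(t_1-t)$ to produce $R^a(t_1-t_0)^{1-b}I(t_0,t_1)^{b/2}$, then applies Young's inequality twice, while you fold the H\"older step into a single pointwise-in-$t$ Young inequality before integrating---an equivalent cosmetic rearrangement---and your second balancing step is exactly the paper's inequality \eqref{ineq:foreverYoung} applied with $X=(t_1-t_0)^2/(\eta^2 R^{4/\alpha})$. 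One small notational slip: the factors $x=K(t)(t_1-t)^{1/2}$ and $y=R^a(t_1-t)^{-b/2}$ you name do not multiply to $R^aK(t)^b$; take $x=(t_1-t)^{b/2}K(t)^b$ instead, after which the displayed estimate follows exactly as you wrote it.
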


\begin{remark*}
{\em The role of the exponent $0 < \alpha < 1$ will become clear in the proof of Theorem \ref{thm:rate} below. }
\end{remark*}
\begin{proof}
From Lemma \ref{lem:NR} we recall that
$$
| \NN_R[u(t)] | \leq C(u_0) R^a \| \DD u(t) \|_{L^2}^b
$$
with some constants $a \geq 1$ and $0 < b < 1$. Integrating this bound on $[t_0, t_1]$ and using H\"older's inequality, we find
\begin{align*}
\int_{t_0}^{t_1} |\NN_R[u(t)] | \, dt & \leq C(u_0) R^a \int_{t_0}^{t_1} \| \DD u(t) \|_{L^2}^b \, d t  \\
& \leq C(u_0) R^a \left ( \int_{t_0}^{t_1} (t_1-t)^{-\frac{b}{2-b}} \, dt \right )^{\frac{2-b}{2}} \left ( \int_{t_0}^{t_1} (t_1 -t )\| \DD u(t) \|_{L^2}^2 \, dt \right )^{\frac{b}{2}} \\
& = C(u_0) R^a (t_1-t_0)^{1-b} I(t_0,t_1)^{\frac b 2} .
\end{align*}
We let $\eta > 0$ and invoke Young's inequality twice to deduce that
\begin{align*}
\int_{t_0}^{t_1} |\NN_R[u(t)] | \, dt & \leq C(u_0) \left ( \eta^{-1} R^{\frac{2a}{2-b}} (t_1 -t_0)^{\frac{2 (1-b)}{2-b}} + \eta I(t_0, t_1) \right ) \\
& \leq C(u_0) \left ( \frac{(t_1-t_0)^2}{\eta^2 R^{4/\alpha}} + R^{2  a- \frac{4}{\alpha} (1-b)}+ \eta I(t_1, t_0) \right ),
\end{align*}
where we used that
\be \label{ineq:foreverYoung}
\begin{aligned}
R^{\frac{2 a}{2 - b}} (t_1-t_0)^{\frac{2 (1-b)}{2-b}} & = R^{\frac{1}{2-b}\,( 2 a + \frac{4}{\alpha} (1-b) )} \left (\frac{(t_1 - t_0)^2}{R^{\frac{4}{\alpha}}}\right )^{\frac{1-b}{2-b}} \\
& \lesssim \frac{(t_1 - t_0)^2}{\eta R^{\frac{4}{\alpha}}} + \eta R^{2 a + \frac{4}{\alpha} (1-b)}.
\end{aligned}
\ee
The proof of Lemma \ref{lem:NR2} is now complete.
\end{proof}

\section{Universal Upper Bound on Blowup Rate} \label{sec:blowup_rate}

This section is devoted to the proof of Theorem \ref{thm:rate}. Inspired by the work on classical NLS by Merle-Rapha\"el-Szeftel in \cite{MeRaSz-14}, we will make essential use of the localized Riesz bivariance estimates, derived in Section \ref{sec:locrieszvar} above. 

\subsection{Proof of Theorem \ref{thm:rate}}
We assume that $d \geq 3$, $\mu \in \R$, and $0 < s_c < 2$ with the additional condition that $$\frac{4}{d} < \sigma < \min \left \{ \frac{3}{d} + \frac{1}{2}, \frac{6}{d} \right \}.$$
Let $u_0 \in H^2(\R^d)$ be radial and suppose the corresponding solution $u \in C^2([0,T); H^2(\R^d))$ of \eqref{eq:bNLS} blows up at some finite time $0 < T < +\infty$. Furthermore, we let 
\be \label{cond:R}
0 < R \leq \min \left \{ 1, |\mu|^{-2} \right \}
\ee
be a constant that will be chosen sufficiently small  depending on $u_0$, $d$, and $\sigma$. 

\begin{remark*}{\em
In the proofs of Theorems \ref{thm:blowup} and \ref{thm:energy} above, we took $R \gg 1$ to be {\em sufficiently large} to ensure that certain error terms could be neglected. In contrast to this, we emphasize that we will have to choose $R \ll 1$ to be {\em sufficiently small} below.}
\end{remark*}

Following the notation  in Section \ref{sec:locrieszvar} above, we use 
$$
\MM_R[u(t)] := \MM_{\phi_R}[u(t)] \quad \mbox{and} \quad \VV_R[u(t)] := \VV_{\psi_R}[u(t)]
$$
to denote the localized virial and Riesz-bivariance defined where $\phi_R$ and $\psi_R$ were defined in \eqref{eq:phi_Riesz} and \eqref{def:psiR} respectively. Finally, we suppose that
$$
0 < t_0 < t_1 < T 
$$
are two times that will be chosen below sufficiently close to $T$ depending only on $u_0$, $d$, and $\sigma$. Without loss of generality, we assume that $|t_0 -t_1| \leq 1$ holds. For the rest of the proof, we let $C(u_0) > 0$ denote a constant that only depends on $u_0$, $d$, and $\sigma$. 

The proof of Theorem \ref{thm:rate} will now be arranged into two steps as follows.

\medskip
{\bf Step 1 (Integral Bounds).} We start by bounding the error term (including those in $\cO(|\mu|)$) in Lemma \ref{lem:MR} as follows
\be
\begin{aligned}
& \cO  \left ( R^{-4} + \left ( R^{-2} + |\mu| \right ) \| \nabla u(t) \|_{L^2}^2 + R^{-\sigma(d-1)} \| \nabla u(t) \|_{L^2}^\sigma + |\mu| R^{-2} \right ) \\
& \leq C(u_0)  \left ( R^{-4} +  R^{-2} \| \nabla u(t) \|_{L^2}^2 + R^{-\sigma (d-1)} \| \nabla u(t) \|_{L^2}^\sigma  \right ) \\
& \leq C(u_0) \left ( \eta^{-1} R^{-4} + \eta^{-1} R^{-4/\alpha}   + \eta \| \DD u(t) \|_{L^2}^2 \right ) \\
& \leq C(u_0) \left ( \eta^{-1} R^{-4/\alpha} + \eta \| \DD u(t) \|_{L^2}^2 \right )
\end{aligned}
\ee
where we used that $\| \nabla u \|_{L^2} \leq \| u \|_{L^2}^{\frac 1 2} \| \DD u \|_{L^2}^{\frac 1 2}$ together with Young's inequality to insert some small number $\eta > 0$ to be chosen later. Moreover, we set the following interpolation exponent
\be
\alpha = \frac{4 - \sigma}{\sigma(d-1)}. 
\ee
Note that in the last step above, we used that $0 < \alpha < 1$ thanks to the fact that $4/d < \sigma < 4$ by assumption. 

Thus, by choosing $0 < \eta < \delta/2$ sufficiently small (recall we set $\delta  := d\sigma - 4$), the differential inequality in Lemma \ref{lem:MR} yields that
\begin{align*}
\frac{d}{dt} \MM_R[u(t)] & \leq 4 d \sigma E[u_0] - \left(2\delta - \eta\right) \| \DD u(t) \|_{L^2}^2 + \frac{C(u_0)}{\eta R^{4 / \alpha}} \\
& \leq - \delta \| \DD u(t) \|_{L^2}^2 + \frac{C(u_0)}{\eta R^{4 / \alpha}} \quad \mbox{for $t \in [t_0, T)$},
\end{align*}
provided that $t_0 < T$ is sufficiently close to $T$ and using that $\| \DD u(t) \|_{L^2} \to +\infty$ as $t \to T$. Integrating this bound on an arbitrary time interval $[t_0, t] \subset [t_0, t_1]$ leads to
\begin{align*}
 \MM_R[u(t)] & \leq  - \delta \int_{t_0}^t \| \DD u(\tau) \|_{L^2}^2 \, d \tau+  \MM_R[u(t_0)]  + \frac{C(u_0)}{\eta R^{4/\alpha}} (t_1-t_0) \\
 & \leq  - \delta \int_{t_0}^t \| \DD u(\tau) \|_{L^2}^2 \, d \tau + \frac 1 4 \frac{d}{dt} \VV_{R}[u(t_0)]  \\
 & \quad + C(u_0) \left ( \frac{(t_1-t_0)}{\eta R^{4/\alpha}} + 1 + R^{a} \| \DD u(t_0) \|_{L^2}^b \right ),
\end{align*}
with some $a >0$ and $0 < b < 1$, where we made use of Lemma \ref{lem:VR} and \ref{lem:NR}. If we use the identity in Lemma \ref{lem:VR} once again and integrate the previous inequality on $[t_0, t_1]$, we obtain
\be \label{ineq:VRR} 
\begin{aligned}
&  \VV_\RR[u(t_1)] + 4 \delta \int_{t_0}^{t_1} \int_{t_0}^t \| \DD u(\tau) \|_{L^2}^2 \, d \tau \, dt - \int_{t_0}^{t_1} \NN_R[u(t)] \, d t \\
& \leq \VV_{R}[u(t_0)] + C(u_0) \left ( \frac{(t_1-t_0)^2}{\eta R^{4/\alpha}} +  \left (1 + R^a \| \DD u(t_0) \|_{L^2}^b \right  ) (t_1-t_0)\right ) 
\end{aligned}
\ee
Note that integration by parts on $F(t) = \int_{t_0}^t \| \DD u(\tau) \|_{L^2}^2 \, d \tau$ yields that
$$
\int_{t_0}^{t_1} \int_{t_0}^t \| \DD u(\tau) \|_{L^2}^2 \, d \tau \, dt  =   \int_{t_0}^{t_1} (t_1 - t) \| \DD u(t) \|_{L^2}^2 dt .
$$
Next, we combine the facts that $\VV_R[u(t_1)] \geq 0$ from \eqref{id:VirPos} and $\VV_{R}[u(t_0)] \leq C(u_0) R^4$  from \eqref{ineq:HardyVR} with the previous bound \eqref{ineq:VRR}. Furthermore, we use the time-averaged bound for $\NN_R[u(t)]$ in Lemma \ref{lem:NR2} with $\eta > 0$ sufficiently small to deduce that
\be \label{ineq:blowup1}
\begin{aligned}
& \int_{t_0}^{t_1} (t_1-t) \| \DD u(t) \|_{L^2}^2 \, dt  \\
& \leq C(u_0) \left ( \frac{(t_1-t_0)^2}{\eta^2 R^{4/\alpha}} +   \left (1   + R^a  \| \DD u(t_0) \|_{L^2}^b  \right ) (t_1-t_0) + R^{2  a- \frac{4}{\alpha} (1-b)} + R^4 \right ) . 
\end{aligned}
\ee
Since $0 < b < 1$, we can apply Young's inequality to get
\be \label{ineq:blowup2}
\begin{aligned}
R^a \| \DD u(t_0) \|_{L^2}^b (t_1-t_0) & \lesssim \eta^{-1} R^{\frac{2a}{2-b}} (t_1-t_0)^{\frac{2 (1-b)}{2-b}} + \eta (t_1-t_0)^2 \| \DD u(t_0) \|_{L^2}^2 \\
& \lesssim \frac{(t_1-t_0)^2}{\eta^2 R^{4/\alpha}} + R^{2a + \frac{4}{\alpha}(1-b)} + \eta (t_1 - t_0)^2 \| \DD u(t_0) \|_{L^2}^2,
\end{aligned}
\ee 
where we used \eqref{ineq:foreverYoung} for the last step. Next, we note that
\be \label{ineq:blowup3}
(t_1-t_0) \lesssim \frac{(t_1-t_0)^2}{R^{4/\alpha}} + R^{4/\alpha} \lesssim \frac{(t_1-t_0)^2}{R^{4/\alpha}} + R^4,
\ee
since we have $R^{4 / \alpha} \leq R^4$ due to $0 < \alpha < 1$ and $0 < R \leq 1$. By inserting the bounds \eqref{ineq:blowup2} and \eqref{ineq:blowup3} into \eqref{ineq:blowup1} with $\eta > 0$ sufficiently small, we obtain
\be
\begin{aligned}
& \int_{t_1}^{t_0} (t_1-t) \| \DD u(t) \|_{L^2}^2 \, dt \\
&  \leq C(u_0) \left ( \frac{(t_1-t_0)^2}{\eta^2 R^{4/\alpha}} + \eta (t_1-t_0)^2 \| \DD u(t_0) \|_{L^2}^2 + R^{\rho} + R^4 \right ) .
\end{aligned}
\ee
where we introduce the exponent
\be 
\rho := 2a - \frac{4}{\alpha}(1-b) .
\ee
Now we claim that
\be \label{ineq:rho}
4 > \rho \geq 4 - \left (\frac{3+a}{2}  \right ) \delta \quad \mbox{with $\delta = d \sigma - 4 \in (0,1)$,}
\ee
which, in particular, implies that $R^{4} \leq R^\rho$ for $0 < R \leq 1$. To show \eqref{ineq:rho}, we apply the identities
$$
\frac{1}{\alpha} = \frac{\sigma(d-1)}{4 - \sigma} = \frac{4 + \delta}{4- \delta/(d-1)} \quad \mbox{and} \quad b = \frac{1}{2} (a +\delta),
$$
which lead us to 
\be
\rho  = 2 a + 2 ( 2-a-\delta) \frac{4 + \delta}{4- \delta/(d-1)} .
\ee
As an aside, we remark that this identity shows that 
\be \label{conv:rho}
\rho \to 4 \ \ \mbox{as} \ \ \delta \to 0.
\ee 
Furthermore, we deduce the the lower bound
$$
\rho \geq 2 a + 2 (2-a-\delta) (1 + \delta/4)  = 4 - \frac{\delta}{2} \left ( 2+a + \delta \right ) \delta \geq 4 - \left ( \frac{3+a}{2} \right ) \delta ,
$$
using that $0 < \delta < 1$. 
On the other hand, an elementary calculation shows that
$$
\rho- 4 = - \frac{2 \delta ( (2d-4) + \delta(d-1) + d a)}{4(d-1) - \delta} < 0.
$$
Thus we have shown that \eqref{ineq:rho} holds and we finally obtain
\be \label{ineq:blowupmaster}
%\begin{aligned}
\int_{t_0}^{t_1} (t_1 -t) \| \DD u(t) \|_{L^2}^2 \, dt \leq C(u_0) \left ( \frac{(t_1-t_0)^2}{\eta^2 R^{4/\alpha}} + \eta (t_1-t_0)^2 \| \DD u(t_0) \|_{L^2}^2 + R^\rho \right ).
%\end{aligned}
\ee 

\medskip
{\bf Step 2 (Conclusion).} First, we note that right side of \eqref{ineq:blowupmaster} has a finite limit when we take $t_1 \to T < +\infty$.  Furthermore, we  make the optimized ansatz
\be
R = R(t_0) := (T - t_0)^{\frac{2 \alpha}{4 + \rho \alpha}} ,
\ee 
so that $(T-t_0)^2 R^{-4/\alpha} = R^\rho$, and we choose $t_0 < T$ sufficiently close to $T$ in order to guarantee that \eqref{cond:R} holds. With this choice of $R=R(t_0)>0$ and by taking $\eta > 0$ sufficiently small, we deduce
\be \label{ineq:blowupmaster2}
\int_{t_0}^T (T -t) \| \DD u(t) \|_{L^2}^2 \, dt \leq C(u_0) (T-t_0)^{\frac{2 \rho \alpha}{4 + \rho \alpha}} +  (T- t_0)^2 \| \DD u(t_0) \|_{L^2}^2.
\ee
For $t \in [t_0, T)$, we now define the function
\be
g(t):= \int_{t}^T (T-\tau) \| \DD u(\tau) \|_{L^2}^2 \,d \tau .
\ee
Then the integral estimate \eqref{ineq:blowupmaster2} can be written as
$$
g(t)+(T-t) g'(t) \leq C(u_0) (T-t)^{\frac{2 \rho \alpha}{4 + \rho \alpha}}.
$$
Thus we find
$$
\frac{d}{dt} \left ( \frac{g(t)}{T-t} \right ) = \frac{g(t) + (T-t) g'(t) }{(T-t)^2} \leq C(u_0) (T-t)^{\frac{2 \rho \alpha}{4 + \rho \alpha} - 2}.
$$
Hence by integration on $[t_0,t]$ it follows that
$$
\frac{g(t)}{T-t} \leq C(u_0) \left (1 + (T-t)^{\frac{2 \rho \alpha}{4 + \rho \alpha} -1 } \right ) .
$$
Note that $\frac{2 \rho \alpha}{4 + \rho \alpha} < 1$, since $\rho \alpha < 4$ by \eqref{ineq:rho} and $\alpha < 1$. Therefore, we have
\be \label{ineq:g}
g(t) \leq C(u_0) (T-t)^{\frac{2 \rho \alpha}{4 + \rho \alpha}} = C(u_0) (T-t)^{\frac{2 \beta}{1+\beta}} \quad \mbox{with $\beta := \frac{1}{4} \rho \alpha$},
\ee
for $t < T$ sufficiently close to $T$. By choosing $C(u_0) > 0$ larger if necessary, we trivially extend the bound \eqref{ineq:g} to all times $t \in [0,T)$.

Finally, we note that $\beta \to \alpha$ as $\delta \to 0$ (i.\,e.~as $\sigma \to 4/d$) in view of \eqref{conv:rho}. This concludes the proof of Theorem \ref{thm:rate}. \hfill $\blacksquare$

%Suppose $n $\delta = \sigma n - 4 \geq 0$ ($L^2$ (super)-critical condition), $\delta <1$ (nonlinear condition); or in other words $\frac{4}{n} < \sigma < \frac{5}{n}$ (with $\sigma < \frac32$ as a further restriction when $n = 3$). One may then compute

%\be \label{d/dtLocVir}
%\begin{aligned}
   %& \frac{d}{dt} V_R(t) = N_\phi[u(t)] + \tilde{L}_\phi[u], \quad \text{and} ~ \tilde{L}_\phi[u] = 4\,\tilde{M}_\phi[u(t)] + L_\phi[u] ~ \text{where :} \\
   %& \quad \tilde{M}_\phi[u] := 2 \, {\rm Im} \int \nabla^2 \phi \cdot (\nabla \phi, \overline{u} \,\nabla u) \quad (\text{localized Momentum}) \\
   %& \tilde{M}_\phi[u] := 2 \, {\rm Im} \int \overline{u} \, \pt_i \phi \, \pt_k u \, \pt_{ik}^2 \phi = 2 \, {\rm Im} \int \nabla^2 \phi \cdot (\nabla \phi, \overline{u} \,\nabla u) \quad (\text{loc. momentum}) \\
   %& \quad N_\phi[u] := - 2 \, {\rm Im} \int \overline{u} \, \nabla \phi \cdot (- \Delta)^{-1} \big( |u|^{2 \sigma} u \, \nabla \phi \big) \quad (\text{Nonlinear interactions}) \\
   %& N_\phi[u] := - 2 \, {\rm Im} \int \overline{u} \, \pt_i \phi \, (- \Delta)^{-1} \big( |u|^{2 \sigma} u \, \pt_i \phi \big) \quad (\text{nonlinear interactions}) \\
   %& \quad \tilde{L}_\phi[u] = 2 \, {\rm Im} \int \overline{u} \, \nabla \phi \cdot (-\Delta)^{-1} \big(\nabla \phi \big\{ \Delta^2 - \mu \Delta \big\} u \big) \quad (\text{kinetic energy Loss}) \\
   %& L_\phi[u] = 2 \, {\rm Im} \int \overline{u} \, \pt_i \phi \, (-\Delta)^{-1} \big(\pt_i \phi \Delta^2 u \big) \quad (\text{kinetic energy loss})
%\end{aligned}	
%\ee

\section{Existence of Blowup for Mass-Critical Case}  \label{sec:blowup_mass}

Let $d \geq 2$, $\mu \geq 0$, and $s_c=0$, i.\,e., we consider the mass-critical exponent
\be
\sigma = \frac{4}{d}.
\ee
We divide the proof of Theorem \ref{thm:mass} into the following steps.

\medskip
{\bf Case 1 (Blowup for $\mu >0$).} In this case, the proof of finite-time blowup for radial data $u_0 \in H^2(\R^d)$ with $E(u_0) < 0$ is similar to the proof of Theorem \ref{thm:blowup} for the mass-supercritical case. In fact, we just exploit the observation that the exponent $\sigma=4/d$ is mass-supercritical with respect to the lower-order NLS dispersion $-\mu \DD$ in \eqref{eq:bNLS}.

Let $\phi_R=\phi(r/R)$ with $R >0$ be a cutoff function as chosen in Section \ref{sec:locvirial} above. Moreover, we use the short-hand notation $\MM_R[u(t)] \equiv \MM_{\phi_R}[u(t)]$ for the localized virial. From Lemma \ref{lem:MR} we obtain that
\be \label{ineq:L2mu}
\begin{aligned}
& \frac{d}{dt} \MM_R[u(t)]  \leq 16 E[u_0] - 4 \mu \| \nabla u(t) \|_{L^2}^2 \\
&\phantom{\frac{d}{dt} \MM_R[u(t)]}  \quad + \cO \left ( R^{-4} + R^{-2} \| \nabla u(t) \|_{L^2}^2 + R^{-4+ \frac{4}{d}} \| \nabla u(t) \|_{L^2}^{\frac{4}{d}} + |\mu| R^{-2} \right ) \\
& \phantom{\frac{d}{dt} \MM_R[u(t)]} \leq 8 E[u_0]  - 2 \mu \| \nabla u(t) \|_{L^2}^2 \quad \mbox{for $t \in [0,T)$},
\end{aligned}
\ee
provided we choose $R> 0$ sufficiently large, where we used that $E[u_0] < 0$ and $\sigma = \frac{4}{d} \leq 2$ by assumption.

Suppose now that $T = +\infty$ holds. Since $E[u_0] < 0$, we see that $\MM_{R}[u(t_1)] \leq 0$ for all $t \geq t_1$ with some sufficiently large time $t_1  >0$. By integrating \eqref{ineq:L2mu}, 
\be
\MM_R[u(t)] \leq -2 \mu \int_{t_1}^t \| \nabla u(s) \|_{L^2}^2 \,ds 
\ee
for all $t \geq t_1$. By the Cauchy-Schwarz inequality, we get $|\MM_R[u]| \lesssim \| \nabla \phi_R \|_{L^2} \| u \|_{L^2} \| \nabla u \|_{L^2}$ and thus we arrive at
\be
\MM_R[u(t)] \leq - A \int_{t_1}^t |\MM_R[u(s)]|^2 \, ds \quad \mbox{with $A:=C(u_0,\mu) > 0$.}
\ee
As in the proof of Theorem \ref{thm:blowup} above, we deduce that $\MM_R[u(t)] \to -\infty$ as $t \to t_*$ for some finite time $t_* < +\infty$. This shows that $u(t)$ cannot exist for all $t \geq 0$. By the blowup alternative, we have finite-time blowup of $u(t)$.

\medskip
{\bf  Case 2 (Blowup for $\mu = 0$).} In this case, the absence of the lower-order dispersion in \eqref{eq:bNLS} requires a more refined analysis of the problem. 

In what follows, we choose the cutoff function $\phi(r)$ that satisfy some additional properties needed, as done in Appendix \ref{sec:cutoff}. Going back to the proof of Lemma \ref{lem:MR} (see, in particular, the proof of Step 2 there), we first observe that
$$
\begin{aligned}
& 4 \langle u, \pt_k (\pt^2_{kl} \DD \phi_R) \pt_l u \rangle + 2 \langle u, \pt_k (\DD^2 \phi_R) \pt_k u \rangle = -4 \int_{\R^d} \pt_r^2 \DD \phi_R \,  |\pt_r u|^2 - 2 \int (\DD^2 \phi_R) |\pt_r u|^2 \\
%& = -4 \int_{\R^d} \left ( \pt_r^2 \DD \phi_R + \frac{d-1}{r} \pt_r \DD \phi_R \right ) |\pt_r u|^2 + 2 \int (\DD^2 \phi_R) |\pt_r u|^2 = -2 \int_{\R^d} (\DD^2 \phi_R) |\pt_r u|^2,
\end{aligned}
$$
using integration by parts and the formula \eqref{eq:Hessian}. Thus from the calculations in steps 2 and 3 of the proof of Lemma \ref{lem:MR}, and the sign properties \eqref{ineq:phi1} of $\phi_R$ we infer
\be
\begin{aligned} \label{eq:mass_bnls1}
& \frac{d}{dt} \MM_R[u(t)] \leq 16 E(u_0)  - 8 \int_{\R^d}  \left (1 - \pt_r^2 \phi_R \right ) |\pt_r^2 u|^2 \\
& \phantom{\frac{d}{dt} \MM_R[u(t)]} \quad - \int_{\R^d} A_R |\pt_r u|^2 + \int_{\R^d} B_R |u|^{\frac{8}{d} +2 } + \int_{\R^d} (\DD^3 \phi_R) |u|^2
\end{aligned}
\ee
with the radial functions
\be \label{eq:g1}
A_R(r) :=  4\pt_r^2 \DD \phi_R + 2 \DD^2 \phi_R(r), \quad B_R(r) := \frac{8 d}{4+d} \left ( d- \DD \phi_R(r) \right ).  
\ee
Note that $B_R(r) \geq 0$ is nonnegative for all $r \geq 0$ with $B_R(r) \equiv 0$ for $r \leq R$. 

Next, we integrate by parts twice using that $\pt_r^* = - \pt_r - \frac{d-1}{r}$ and obtain
$$
\int_{\R^d} A_R |\pt_r u|^2 = - \int_{\R^d} \overline{u} A_R \pt_r^2 u + \frac 1 2 \int_{\R^d}   \left ( \left ( \pt_r + \frac{d-1}{r} \right )^2 A_R \right ) |u|^2. 
$$
Since $\| \pt_r^j A_R \|_{L^\infty} \lesssim R^{-2-j}$ for $j = 0,1,2$ and $\mathrm{supp} A_R \subset \{ |x| \geq R \}$, we can apply H\"older's and Young's inequality to find that
\be \label{ineq:AR}
\left | \int_{\R^d} A_R |\pt_r u|^2 \right | \lesssim  8\eta R^4 \| A_R \pt_r^2 u \|_{L^2}^2+ \eta^{-1} R^{-4} \| u \|_{L^2}^2 .
\ee
for arbitrary $\eta > 0$. 

Next, we recall that $B_R(r) \equiv 0$ for $r \leq R$ and we invoke the Strauss inequality \eqref{ineq:Strauss}, which yields
\be
\begin{aligned} \label{ineq:BR1}
& \left | \int_{\R^d} B_R |u|^{\frac{8}{d} + 2} \right | \leq  \| u \|_{L^2}^2  \big \| B_R^{\frac{d}{8}} u \big \|_{L^\infty(|x| \geq R)}^{\frac{8}{d}} \\
& \phantom{\left | \int_{\R^d} B_R |u|^{\frac{8}{d} + 2} \right | } \lesssim R^{-4 + \frac{4}{d}} \| u \|_{L^2}^2 \| B_R^{\frac{d}{8}} u \|_{L^2}^{\frac{4}{d}} \| \pt_r ( B_R^{\frac{d}{8}} u ) \|_{L^2(|x| \geq R)}^{\frac{4}{d}} .
\end{aligned}
\ee
Since $|\pt_r( B_R^{\frac{d}{8}} u)|^2 \lesssim | (\pt_r B_R^{\frac{d}{8}}) u |^2 + | B_R^{\frac{d}{8}} \pt_r u|^2$, a similar argument combining integration by parts with Young's inequality, as we used to derive \eqref{ineq:AR} gives us, for any $\eta > 0$, 
\be
\begin{aligned}
& \| \pt_r ( B_R^{\frac{d}{8}} u ) \|_{L^2}^2  \lesssim R^{-2} \| u \|_{L^2}^2 + \int_{\R^d} B_R^{\frac{d}{4}} |u| |\pt_r^2 u| \\
%& \phantom{\| \pt_r ( B^{\frac{8}{d}} u ) \|_{L^2}^2} \lesssim \left ( \eta^{-\frac{d-2}{2}} + R^{-2} \right ) \| u \|_{L^2}^2 + \eta^{\frac{d-2}{2}} \| B_R^{\frac{d}{4}} \pt_r^2 u \|_{L^2}^2,
& \phantom{\| \pt_r ( B^{\frac{8}{d}} u ) \|_{L^2}^2} \lesssim \left( \eta^{-\frac{1}{4}} + R^{-2} \right) \| u \|_{L^2}^2 + 8\eta^{\frac{1}{4}} \| B_R^{\frac{d}{4}} \pt_r^2 u \|_{L^2}^2,
\end{aligned}
\ee
where we used the bounds $\| \pt_r^j B^{\frac{d}{4}}_R \|_{L^\infty} \lesssim R^{-j}$ for $j=1,2$ and $\| \partial_r B_R^{\frac{d}{8}} \|_{L^\infty} \lesssim R^{-1}$ (see Appendix \ref{sec:cutoff}) together with  the fact that $B_R(r) \equiv 0 $ for $|x| \leq R$. Going back to \eqref{ineq:BR1}, we readily deduce from Young's inequality for $d=2$ and $R \geq 1$
\be \label{ineq:BR2}
%\left | \int_{\R^d} B_R |u|^{\frac{8}{d} +2} \right | \lesssim R^{-{\color{red}4}} \| u \|_{L^2}^6 + R^{-2} \| u \|_{L^2}^4 \| B_R^{\frac{d}{4}} \pt_r^2 u \|_{L^2}^2 \quad \mbox{for $d=2$ and $R \geq 1$}.
\left | \int_{\R^d} B_R |u|^{\frac{8}{d} +2} \right | \lesssim R^{-2}\left( \eta^{-\frac{1}{4}} + R^{-2} \right) \| u \|_{L^2}^6 + 8\eta^{\frac14}R^{-2} \| u \|_{L^2}^4 \| B_R^{\frac{d}{4}} \pt_r^2 u \|_{L^2}^2
\ee
For $d \geq 3$, a further use of Young's inequality (inserting the small parameter $\eta^{3/4} > 0$) now yields
\be \label{ineq:BR3}
\begin{aligned}
%\left | \int_{\R^d} B_R |u|^{\frac{8}{d} + 2} \right | \lesssim C( u_0) \left ( \eta^{-3} R^{-2} + \eta \right ) + 8 \eta \| B_R^{\frac{d}{4}} \pt_r^2 u \|_{L^2}^2.
\left | \int_{\R^d} B_R |u|^{\frac{8}{d} + 2} \right | & \lesssim \eta^{-\frac34} R^{-\frac{4(d-1)}{d-2}} \|u\|_{L^2}^{\frac{2(d+2)}{d-2}} + \left( \eta^{\frac12} + \eta^{\frac34}R^{-2} \right) \| u \|_{L^2}^2 + 8\eta \| B_R^{\frac{d}{4}} \pt_r^2 u \|_{L^2}^2 \\
 & \lesssim C( u_0) \left ( \eta^{-1} R^{-2} + \eta^{\frac12} \right ) + 8 \eta \| B_R^{\frac{d}{4}} \pt_r^2 u \|_{L^2}^2.
\end{aligned}
\ee
provided that $R \geq 1$ and $0 < \eta < 1$.  Note that estimate \eqref{ineq:BR3} implies \eqref{ineq:BR2} also for $d=2$ if we choose $R \geq 1$ and $\eta < 1$. Thus by plugging this back into \eqref{eq:mass_bnls1} and recalling that $\|\DD^2 \phi_R \|_{L^\infty} \lesssim R^{-2}$, we obtain
\be
\begin{aligned} \label{ineq:Mmass}
& \frac{d}{dt} \MM_R[u(t)] \leq 16 E[u_0]   - 8 \int_{\R^d} \left (1 - \pt_r^2 \phi_R - \eta \left \{ R^4 (A_R)^2 + (B_R)^{\frac{d}{2}} \right \} \right ) |\pt_r^2 u|^2 \\ 
& \phantom{\frac{d}{dt} \MM_R[u(t)]} \quad + C(u_0) \left ( \eta^{-1} R^{-2} + \eta^{\frac12} \right ), \\
\end{aligned}
\ee
for $R \geq 1$, $0 < \eta < 1$, and $d \geq 2$. 

As a next step, we claim that there is some $\eta_0 > 0$ sufficiently small and independent of $R \geq 1$ such that
\be \label{ineq:trans}
1- \pt_r^2 \phi_R(r) - \eta_0 \left \{ R^4 (A_R(r))^2 + (B_R(r))^{\frac{d}{2}} \right \} \geq  0 \quad \mbox{for $r \geq 0$}.
\ee
The proof of this inequality is worked out in Appendix \ref{sec:cutoff}. 

Since $E[u_0] < 0$ by assumption, we can now choose $0 < \eta \leq \eta_0$ sufficiently small and $R \geq 1$ sufficiently large to conclude from \eqref{ineq:Mmass} that
\be \label{ineq:MR_mu_zero}
\frac{d}{dt} \MM_R[u(t)] \leq 8 E[u_0] \quad \mbox{for all $t \in [0,T)$}.
\ee
Assume now that $T= +\infty$ holds. Then we have $\MM_R[u(t)] \leq 0$ for all $t \geq t_0$ with some sufficiently large time $t_0 \geq 0$. On the other hand, by the Cauchy-Schwarz inequality and integrating, 
\be
- \| \nabla \phi_R \|_{L^\infty} \| u_0 \|_{L^2}^{3/2} \| \DD u(t) \|_{L^2}^{\frac 1 2} \leq \MM_R[u(t)] \leq -8 |E[u_0]| (t-t_0) \quad \mbox{for all $t \geq t_0$.}
\ee
Thus we conclude the following: Either $u(t)$ exists for all times $t \geq 0$ such that
\be \label{ineq:infiniteblowup}
\| \DD u(t) \|_{L^2} \geq C(u_0) (t-t_0)^2 \quad \mbox{for all $t \geq t_0$},
\ee
or the solution $u(t)$ blows up in finite time.

\medskip
{\bf Improved Bounds for $\mu=0$.} We consider the localized Riesz bivariance
$$
\VV_R[u(t)] = \left \langle u(t), \pt_k \psi_R (-\DD)^{-1} \pt_k \psi_R u(t) \right \rangle,
$$
with the cutoff function $\psi_R$ defined in terms of $\phi_R$ via \eqref{def:psiR}, where $\phi_R$ is chosen as above. Choosing $R > 0$ sufficiently large as above, we use Lemma \ref{lem:VR} together estimate \eqref{ineq:MR_mu_zero} and we find that, by integrating in time,
\be \label{ineq:VVmass_critical}
 \VV_R[u(t)] \leq 16 E[u_0] t^2 + \int_0^t \NN_R[u(s)] \, ds + C(u_0) (1+t)  \quad \mbox{for $t \geq 0$}.
\ee  
Moreover, by Lemma \ref{lem:NR}, we have the estimate
\be \label{ineq:NNR_mass}
\left |\NN_R[u] \right | \leq C(u_0, R, b) \| \DD u \|_{L^2}^b,
\ee
where the exponent $b> 0$ is given by
\be \label{eq:bb}
b = \begin{dcases*} \eps & for $d \geq 8$, \\
\frac{1}{24} + \eps & for $d=7$, \end{dcases*}
\qquad b= \begin{dcases*}
\frac{1}{10} + \eps & for $d=6$, \\
\frac{6-d}{4} & for $d = 3,4,5$, \end{dcases*}
\ee 
with arbitrary $0 < \eps < 2$. 

Let $\nu \geq 0$ and suppose there is some constant $C > 0$ such that
\be \label{ineq:infinite}
\| \DD u(t) \|_{L^2} \leq C (1 +  t)^{\nu} \quad \mbox{for $t \geq 0$}.
\ee
Using the bound \eqref{ineq:NNR_mass}, we deduce from \eqref{ineq:VVmass_critical} that
$$
\VV_R[u(t)] \leq 16 E[u_0] t^2 + C_1  (1+t)^{b \nu + 1} + C_2 (1+t) \quad \mbox{for $t \geq 0$},
$$
with some constants $C_1=C_1(u_0, R, b, \nu)>0$ and $C_2=C_2(u_0)> 0$. Suppose now that
$$
b \nu < 1.
$$
Since $E[u_0] < 0$ by assumption, we see that $\VV_R[u(t_*)] < 0$ for some sufficiently large time $t_* > 0$. But this is a contradiction. Hence the bound \eqref{ineq:infinite} cannot hold if $b \nu < 1$. Therefore, we conclude
\be \label{eq:limsup}
\limsup_{t \to +\infty} \left ( t^{-\nu} \| \DD u(t) \|_{L^2} \right ) = +\infty,
\ee
provided that
$$
0 \leq \nu < \begin{dcases*} +\infty & for $d \geq 8$, \\ 24 & for $d =7$, \end{dcases*} \qquad 0 \leq \nu  < \begin{dcases*} 10 & for $d =6$, \\ \frac{4}{6-d} & for $d=3,4,5$. \end{dcases*}
$$
For $d \geq 5$, we note that \eqref{eq:limsup} gives extra information that cannot be deduced from the lower bound \eqref{ineq:infiniteblowup}.

The proof of Theorem \ref{thm:mass} is now complete. \hfill $\blacksquare$

\begin{appendix}

\section{Ground States for Biharmonic NLS} \label{sec:Q}

\subsection{Energy-Subcritical Case.}
Let $d \geq 1$ and assume that $0 < \sigma < \sigma_*$, where $\sigma_* = +\infty$ if $d \leq 4$ and $\sigma_* = \frac{4}{4-d}$ if $d \geq 5$. For $u  \in H^2(\R^d)$ with $u \not \equiv 0$, we define the Weinstein functional
\be
\WW_{d,\sigma}[u] := \frac{ \| u \|_{L^{2 \sigma +2}}^{2 \sigma+2}}{ \| \Delta u \|_{L^2}^{ \frac{d \sigma}{2}}  \| u \|_{L^2}^{ 2 \sigma+2- \frac{d \sigma}{2} }} ,
\ee
and we consider the corresponding maximization problem given by
\be \label{def:Wein}
C_{d,\sigma} := \sup_{0 \not \equiv u \in H^2(\R^d)} \WW_{d,\sigma}[u] .
\ee 
It can be shown this supremum is attained; see, e.\,g., \cite{BeFrVi-14} and also below for a simple proof when $\sigma \in \N$. By construction, the number $C_{d,\sigma}>0$ is the optimal constant for the Gagliardo-Nirenberg (GN) interpolation inequality
\be \label{ineq:GNSAp}
\| u \|_{L^{2 \sigma+2}}^{2 \sigma+2} \leq C_{d,\sigma}   \| \Delta u \|_{L^2}^{ \frac{d \sigma}{2}}  \| u \|_{L^2}^{ 2 \sigma + 2 - \frac{d\sigma}{2} } 
\ee
valid for all $u \in H^2(\R^d)$. Following standard convention, we say that $0 \not \equiv Q \in H^2(\R^d)$ is a {\bf ground state} if $Q$ optimizes \eqref{ineq:GNSAp}; or, equivalently, if $Q$ is a maximizer for \eqref{def:Wein}.  A calculation shows that any ground state $Q \in H^2(\R^d)$ must satisfy (after a rescaling $Q \mapsto \mu Q(\lambda \cdot)$ with suitable constants $\mu, \lambda > 0$) the nonlinear equation
\be \label{eq:Q}
\DD^2 Q + Q - |Q|^{2 \sigma} Q = 0 \quad \mbox{in $\R^d$}.
\ee
It should be remarked that (in contrast to NLS with $\DD$ instead of $\DD^2$) radial symmetry of ground states $Q$ is not known.  However, what is known is that, if $Q$ is assumed to be radial and real-valued, then positivity of $Q$ cannot hold, since an asymptotic expansion shows that $Q(r)$ changes its sign infinitely often as $r \to \infty$; see, e.\,g., \cite{FiIlPa-02}. In general, the delicate issue of uniqueness  of $Q$ (modulo symmetries) as well as the non-degeneracy of the associated linearized operator are completely open questions.

\begin{prop}[Pohozaev-Type Identities] Let $d \geq 1$ and $0 < \sigma < \sigma_*$. For any solution $Q \in H^2(\R^d)$ of  \eqref{eq:Q}, we have
$$
\| \DD Q \|_{L^2}^2 =  \left( \frac{d}{d+2(2-s_c)} \right ) \| Q \|_{L^{2 \sigma+2}}^{2 \sigma+2} = \left ( \frac{d}{2 (2- s_c)} \right ) \| Q \|_{L^2}^2
$$
with $s_c = \frac{d}{2} - \frac{2}{\sigma}$. If moreover $Q \in H^2(\R^d)$ is a ground state, then
$$
K_{d,\sigma} = \| \DD Q \|_{L^2}^{s_c} \| Q \|_{L^2}^{2-s_c} = \left ( \frac{s_c}{d} \right)^{-\frac{s_c}{d}} E[Q]^{\frac{s_c}{2}} M[Q]^{1-\frac{s_c}{2}},
$$
where 
$$
K_{d,\sigma} = \left ( \frac{4 (\sigma+1)}{d \sigma C_{d,\sigma}} \right )^{\frac{1}{\sigma}}.
$$
\end{prop}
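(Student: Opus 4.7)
The proof rests on two classical integral identities for $Q$: a Nehari-type identity, obtained by testing the equation against $\overline{Q}$, and a Pohozaev-type identity, obtained either by testing against $x \cdot \nabla \overline{Q}$ or, more cleanly, via a scaling derivative of the action functional. A preliminary remark is that any $H^2$-solution of \eqref{eq:Q} is smooth and decays fast enough (by standard elliptic bootstrap for semilinear fourth-order equations) that all integrations by parts below are legitimate.

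The Nehari identity reads
\begin{equation*}
\|\Delta Q\|_{L^2}^2 + \|Q\|_{L^2}^2 = \|Q\|_{L^{2\sigma+2}}^{2\sigma+2} \tag{N}
\end{equation*}
and follows by one round of integration by parts after multiplying \eqref{eq:Q} by $\overline{Q}$. For the Pohozaev identity, the cleanest route is to use that $Q$ is a critical point of the action $S[u] = \frac{1}{2}\|\Delta u\|_{L^2}^2 + \frac{1}{2}\|u\|_{L^2}^2 - \frac{1}{2\sigma+2}\|u\|_{L^{2\sigma+2}}^{2\sigma+2}$, and therefore $\frac{d}{d\lambda}S[Q(\lambda \cdot)]\big|_{\lambda=1}=0$. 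Exploiting $\|\Delta Q(\lambda \cdot)\|_{L^2}^2 = \lambda^{4-d}\|\Delta Q\|_{L^2}^2$ and $\|Q(\lambda \cdot)\|_{L^p}^p = \lambda^{-d}\|Q\|_{L^p}^p$, differentiating at $\lambda=1$ produces
\begin{equation*}
(4-d)\|\Delta Q\|_{L^2}^2 - d\|Q\|_{L^2}^2 + \frac{d}{\sigma+1}\|Q\|_{L^{2\sigma+2}}^{2\sigma+2} = 0. \tag{P}
\end{equation*}
Eliminating $\|Q\|_{L^{2\sigma+2}}^{2\sigma+2}$ between (N) and (P) yields $\bigl(4+(4-d)\sigma\bigr)\|\Delta Q\|_{L^2}^2 = d\sigma\|Q\|_{L^2}^2$, and using the elementary algebraic identities $4+(4-d)\sigma = 2\sigma(2-s_c)$ and $4(\sigma+1) = \sigma\bigl(d+2(2-s_c)\bigr)$ recasts this in the two claimed forms.

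For the ground-state statement, evaluate $C_{d,\sigma} = \mathcal{W}_{d,\sigma}[Q]$ and substitute $\|Q\|_{L^{2\sigma+2}}^{2\sigma+2} = \frac{4(\sigma+1)}{d\sigma}\|\Delta Q\|_{L^2}^2$ into the numerator. Using the exponent identities $2 - d\sigma/2 = -\sigma s_c$ and $2\sigma+2 - d\sigma/2 = \sigma(2-s_c)$, the resulting expression collapses to
\[
C_{d,\sigma} = \frac{4(\sigma+1)}{d\sigma}\bigl(\|\Delta Q\|_{L^2}^{s_c}\|Q\|_{L^2}^{2-s_c}\bigr)^{-\sigma},
\]
which is exactly the first equality $K_{d,\sigma} = \|\Delta Q\|_{L^2}^{s_c}\|Q\|_{L^2}^{2-s_c}$. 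For the second equality, plugging the relation between $\|Q\|_{L^{2\sigma+2}}^{2\sigma+2}$ and $\|\Delta Q\|_{L^2}^2$ into the energy gives, after using $s_c/d = (d\sigma - 4)/(2d\sigma)$, the clean identity $E[Q] = (s_c/d)\|\Delta Q\|_{L^2}^2$; inverting and substituting back into the formula for $K_{d,\sigma}$ yields the asserted energy-mass form. The main technical point is the Pohozaev identity (P); the scaling derivation sketched above is fully rigorous provided one can differentiate each of the three terms of $S[Q(\lambda \cdot)]$ at $\lambda=1$, which is clear from the explicit scaling formulas. All remaining steps are algebraic manipulations, and no obstacle arises provided the preliminary regularity and decay of $Q$ are in hand.
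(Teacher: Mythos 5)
Your proof is correct and follows essentially the same route as the paper: combine the Nehari identity (testing against $\overline{Q}$) with the Pohozaev identity and finish by direct algebra; deriving Pohozaev via the scaling derivative $\frac{d}{d\lambda}S[Q(\lambda\cdot)]\big|_{\lambda=1}=0$ is a standard reformulation of integrating against $x\cdot\nabla\overline{Q}$, which is what the paper does, and both require the same regularity/decay justification that you acknowledge via elliptic bootstrap. One small thing you might have caught if you had pushed the final algebra through: the proposition as stated has a typo in the prefactor of the last identity --- working out $\|\Delta Q\|_{L^2}^{s_c} = (d/s_c)^{s_c/2}E[Q]^{s_c/2}$ from $E[Q] = (s_c/d)\|\Delta Q\|_{L^2}^2$ gives the exponent $-s_c/2$, not $-s_c/d$, consistent with the formula $K_{d,\sigma} = (s_c/d)^{-s_c/2}\Lambda[Q]^{1/2}$ the authors actually use in the proof of Theorem~\ref{thm:blowup}.
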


\begin{proof}
If we integrate equation \eqref{eq:Q} against $\overline{Q}$ and $x \cdot \nabla \overline{Q}$, we find
$$
\| \DD Q \|_{L^2}^2 + \| Q \|_{L^2}^2 - \| Q \|_{L^{2 \sigma+2}}^{2 \sigma+2} = 0,
$$
$$
(4-d) \| \DD Q \|_{L^2}^2 - d \| Q \|_{L^2}^2 +\frac{2d}{2 \sigma+2} \| Q \|_{L^{2\sigma+2}}^{2 \sigma+2} = 0.
$$
Note that, by standard arguments, we check that $x \cdot \nabla Q$ has sufficient regularity and spatial decay that justifies this calculation. The rest of the proof follows from direct computations, using also that a ground state $Q \in H^2(\R^d)$, which, by definition, turns \eqref{ineq:GNSAp} into an equality.
\end{proof}

\subsection{Radial Symmetry of Ground States}
The aim of this subsection is to prove a radial symmetry result for ground states $Q$ for the biharmonic NLS. To the best of our knowledge, nothing is known in that respect. We present an argument based on symmetric-decreasing rearrangement in Fourier space. By using this approach, we will be able to treat the case of integer exponents $\sigma \in \N$.

For $u \in L^2(\R^d)$, we define its {\bf Fourier rearrangement} to be given by
$$
u^{\sharp} := \mathcal{F}^{-1} \{ ( \mathcal{F} u)^* \},
$$
where $f^*$ denotes the symmetric-decreasing rearrangement of a measurable function $f : \R^d \to \C$ that vanishes at infinity, i.\,e., the level sets $\{  |f(x)| > t \} \subset \R^d$ have finite (Lebesgue) measure for every $t>0$; see, e.\,g., \cite{LiLo-01} for a review of rearrangement techniques. For the Fourier transform $\mathcal{F}$, we use the convention that
$$
(\mathcal{F} u)(\xi) = \int_{\R^d} u(x) e^{-2\pi i x \cdot \xi} \, dx,
$$
and thus the inverse Fourier transform is given by $(\mathcal{F}^{-1} v)(x) = \int_{\R^d} v(\xi) e^{2 \pi i x \cdot \xi} \, d\xi$. Note that we always have that $\| u^\sharp \|_{L^2} = \| u \|_{L^2}$ by Plancherel's theorem and the fact that $\| f^* \|_{L^2} = \| f \|_{L^2}$. Furthermore, the function $u^\sharp(x)$ is radially symmetric, since it is the (inverse) Fourier transform of the radially symmetric function $(\FF u)^*$ on $\R^d$. 

%The following lemma states useful inequalities for norms different than $\| \cdot \|_{L^2}$.

\begin{lemma} \label{lem:rearrange} For $d \geq 1$, the following inequalities hold.
\begin{enumerate}
\item[(i)] If $u \in H^s(\R^d)$ with $s \geq 0$, then $u^\sharp \in H^s(\R^d)$ and
$$
\| (-\DD)^s u^\sharp \|_{L^2} \leq \| (-\DD)^s u \|_{L^2} .
$$
Moreover, for $s>0$, we have equality if and only if $|\widehat{u}| = |\widehat{u}|^*$.
\item[(ii)] Let $m \geq 1$ be an integer. If $u \in L^1(\R^d) \cap L^{2m}(\R^d)$ with $\FF u \in L^1(\R^d)$, then $u^{\sharp} \in L^{2m}(\R^d)$ and
$$
\| u \|_{L^{2m}} \leq \| u^\sharp \|_{L^{2m}} .
$$ 
\end{enumerate}
\end{lemma}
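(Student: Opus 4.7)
For part (i), the plan is to transcribe the homogeneous Sobolev norm to the Fourier side and perform a layer-cake decomposition of the weight $|\xi|^{4s}$. By Plancherel,
\[
\|(-\Delta)^{s} u\|_{L^2}^2 = (2\pi)^{4s} \int_{\R^d} |\xi|^{4s} |\widehat u(\xi)|^2 \, d\xi,
\]
and the weight admits the representation $|\xi|^{4s} = \int_0^{\infty} \mathbf{1}_{\{|\xi| > t^{1/(4s)}\}} \, dt$. Fubini will then reduce matters to verifying, for every $r > 0$, the inequality $\int_{|\xi| > r} |\widehat u|^2 \geq \int_{|\xi| > r} ((\widehat u)^{*})^2$. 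Since $\|\widehat u\|_{L^2} = \|(\widehat u)^{*}\|_{L^2}$ by equimeasurability, this is equivalent to $\int_{B_r} |\widehat u|^{2} \leq \int_{B_r} ((\widehat u)^{*})^{2}$, which is exactly the classical Hardy-Littlewood rearrangement inequality applied with the radially decreasing indicator $\mathbf{1}_{B_r}$. The equality statement for $s > 0$ will follow from the strict monotonicity of the weight $|\xi|^{4s}$, which forces equality in Hardy-Littlewood for almost every $r > 0$ and characterizes $|\widehat u|$ as being already radially symmetric and non-increasing, i.e., $|\widehat u| = |\widehat u|^{*}$.

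For part (ii), the plan is to rewrite $\|u\|_{L^{2m}}^{2m}$ as the squared $L^2$-norm of an $m$-fold convolution on the Fourier side. The product-convolution duality $\widehat{u^m} = \widehat u \ast \cdots \ast \widehat u$ ($m$ factors) combined with Plancherel yields
\[
\|u\|_{L^{2m}}^{2m} = \|u^m\|_{L^2}^2 = \bigl\| \widehat u \ast \cdots \ast \widehat u \bigr\|_{L^2}^{2},
\]
and analogously for $u^{\sharp}$; the hypotheses $u \in L^1 \cap L^{2m}$ and $\widehat u \in L^1$ are precisely what ensures that every intermediate expression makes sense. The pointwise triangle inequality $|\widehat u \ast \cdots \ast \widehat u| \leq |\widehat u| \ast \cdots \ast |\widehat u|$ then reduces the problem to the key rearrangement bound
\[
\bigl\| f \ast \cdots \ast f \bigr\|_{L^2}^2 \leq \bigl\| f^{*} \ast \cdots \ast f^{*} \bigr\|_{L^2}^{2}, \qquad f := |\widehat u| \geq 0.
\]
Expanding the square and integrating out one variable via the convolution constraint $y_1 + \cdots + y_m = z_1 + \cdots + z_m$, the left-hand side becomes a multilinear integral over $\R^{(2m-1)d}$ of a product of $2m$ copies of $f$ evaluated at linear combinations of the integration variables. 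The desired inequality is then a direct application of the Brascamp-Lieb-Luttinger rearrangement inequality (see \cite{LiLo-01}).

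The main obstacle lies in part (ii), where the reduction to Brascamp-Lieb-Luttinger requires carefully identifying the linear structure underlying $\|f \ast \cdots \ast f\|_{L^2}^{2}$ and verifying the hypotheses necessary to apply a $2m$-linear rearrangement inequality with the codimension-$d$ constraint coming from the convolution. Part (i) is essentially a one-parameter computation combined with a standard Hardy-Littlewood rearrangement argument; the only delicate point there is the characterization of the equality case, which requires the strict monotonicity of the weight $|\xi|^{4s}$ for $s>0$ together with the classical equality conditions in Hardy-Littlewood.
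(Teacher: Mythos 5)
Your proposal is correct and follows essentially the same route as the paper: Plancherel and a layer-cake reduction with the monotone radial weight for part (i), and Plancherel, the convolution theorem, and the Brascamp--Lieb--Luttinger inequality for part (ii). The differences are cosmetic: in (i) you layer-cake the weight $|\xi|^{4s}$ and cite Hardy--Littlewood where the paper layer-cakes the density $|\widehat{u}|^2$ and argues directly on level sets, and in (ii) you pass through $\|\widehat{u}\ast\cdots\ast\widehat{u}\|_{L^2}^2$ whereas the paper evaluates the $2m$-fold convolution $\widehat{u}\star\widehat{\overline{u}}\star\cdots\star\widehat{\overline{u}}$ at the origin --- the two coincide after expanding the square. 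One phrase worth tightening: the ``classical equality conditions in Hardy--Littlewood'' do not apply verbatim with $g=\chi_{B_r}$, because $\chi_{B_r}$ is not strictly radially decreasing and a single $r$ pins down nothing; the correct conclusion, as in the paper, is obtained by combining equality for almost every $r$ with a second layer-cake in the level $t$ to show that each superlevel set $\{|\widehat{u}|>t\}$ coincides with a centered ball up to null sets.
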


\begin{proof} To prove assertion (i), we first note that $u^\sharp \in L^2(\R^d)$, since $\FF u \in L^2(\R^d)$ and $(\FF u)^* \in L^2(\R^d)$. Thus we have $\mathcal{F}(u^\sharp) = (\FF{u})^*$. Next, we recall the well-known property of the symmetric-decreasing rearrangement that $(|f|^2)^* = (|f|^*)^2$. Thus, by Plancherel's theorem, the claimed inequality in (i) is equivalent to the estimate
\be \label{ineq:rearrange1}
\int_{\R^d}  f^*(y) | 2 \pi y|^{2s} \, dy \leq \int_{\R^d} f(y) | 2 \pi y|^{2s} \, dy
\ee
for any nonnegative measurable function $f \geq 0$ on $\R^d$ that vanishes at infinity. By the layer cake representation, we can write $f(y) = \int_0^\infty \chi_{\{ f > t\}}(y) \, dt$ for almost every $y \in \R^d$ (see, e.\,g., \cite{LiLo-01}). Therefore, it suffices to prove that
\be \label{ineq:rearrange2}
\int_{\R^d} \chi_{A^*}(y) |2 \pi y|^{2s}  \, dy \leq \int_{\R^d} \chi_A(y) | 2 \pi y|^{2s} \, dy
\ee
for any measurable set $A \subset \R^d$ with finite measure, where $A^*$ denotes the symmetric-decreasing rearrangement of $A$, i.\,e., the set $A^*=B_R(0) \subset \R^d$ is the (open) ball around the origin with radius $R>0$ such that $\mu(B_R(0)) = \mu(A)$. (If $\mu(A) = 0$ we take $A^*= \emptyset$.) The proof of \eqref{ineq:rearrange2} is a simple exercise in measure theory. For the reader's convenience, we give the details here. From $\mu(A \setminus A^*) = \mu(A) - \mu(A \cap A^*)$, $\mu(A^* \setminus A) = \mu(A^*)- \mu (A \cap A^*)$, and $\mu(A) = \mu(A^*)$, we deduce that 
$\mu(A \setminus A^*) = \mu(A^* \setminus A)$. This gives us 
\be \label{ineq:rearrange3}
\int_{A\setminus A^*} |y|^{2s} \, dy \geq R^{2s} \mu(A\setminus A^*) = R^{2s} \mu(A^* \setminus A) \geq \int_{A^* \setminus A} |y|^{2s} \, dy,
\ee
using that $|y|^{2s}$ is monotone increasing in $|y|$. Hence $\int_{A} |y|^{2s}  = \int_{A \setminus A^*} |y|^{2s}  + \int_{A \cap A^*} |y|^{2s} \geq \int_{A^* \setminus A} |y|^{2s} + \int_{A \cap A^*} |y|^{2s}  = \int_{A^*} |y|^{2s}$. This shows \eqref{ineq:rearrange2} and hence \eqref{ineq:rearrange1}, which yields in particular that $u^\sharp \in H^s(\R^d)$.

Now suppose that $s>0$  and that equality in \eqref{ineq:rearrange3} holds. In particular, we have the strict inequality $|y|^{2s} < R^{2s}$ for $y \in A^*=B_R$. Suppose now that $\mu(A^* \setminus A) > 0$. Then  $\int_{A^* \setminus A} |y|^{2s} \, dy < R^{2s} \mu(A^*\setminus A)$, but this gives a contradiction if equality holds in \eqref{ineq:rearrange3}. Thus we conclude that equality in \eqref{ineq:rearrange3} can hold only if $\mu(A^* \setminus A)= \mu(A \setminus A^*)=0$, which means $\mu(A \cap A^*) =0$, since $\mu(A) = \mu(A^*)$. In summary, we deduce that equality in \eqref{ineq:rearrange1} can only hold if the level sets of $f \geq 0$ satisfy $\{ f > t \} = \{ f > t\}^*$ (up to a zero measure set) for almost every $t>0$. If we apply this to $f=|\FF{u}|$, we complete the proof of (i).

We now turn to the proof of (ii). We start by showing that $u^\sharp \in L^{2m}(\R^d)$ as follows. Since $u \in L^1(\R^d) \cap L^{2m}(\R^d)$ with $m \geq 1$, we have $u \in L^2(\R^d)$. Consequently, $\FF u \in L^2(\R^d)$ and therefore $(\FF u)^* \in L^2(\R^d)$. Also, since $\FF u \in L^1(\R^d)$ by assumption, it holds that $(\FF u)^* \in L^1(\R^d)$. Thus $(\FF u)^* \in L^1(\R^d) \cap L^2(\R^d)$, which implies that $u^\sharp \in L^2(\R^d) \cap L^\infty(\R^d)$, which shows that $u^\sharp \in L^{2m}(\R^d)$.

Next, because $2m$ is an even integer, we can write
$$
\int_{\R^d} |u(x)|^{2m} \, dx =  \FF{(|u|^{2m})}(0)  = ( \FF u \star \FF \overline{u} \star \ldots \star \FF u \star \FF \overline{u})(0),
$$
using the convolution theorem $\FF(fg) = \FF f \star \FF g$ for $\FF f, \FF g \in L^1(\R^d) \cap L^2(\R^d)$ iteratively $m-1$ times. Now, by the Brascamp-Lieb-Luttinger inequality \cite{BrLiLu-74} (the generalized Riesz' rearrangement inequality), we have that
\begin{align*}
 ( \FF u \star \FF \overline{u} \star \ldots \star \FF u \star \FF \overline{u})(0) & \leq  ( (\FF u)^* \star (\FF \overline{u})^* \star \ldots \ast (\FF u)^* \star (\FF \overline{u})^*)(0) \\
 & = ( (\FF u)^* \star (\FF u)^* \star \ldots \star (\FF u)^* \star (\FF u)^*)(0).
\end{align*}
In the last step, we used the fact that $( \FF \overline{u})^* = (\FF u)^*$, since the functions $(\FF u)(\xi)$ and $(\FF \overline{u})(\xi) = \overline{\FF{u}(-\xi)}$ are equimeasurable. Next, we recall that $(\FF u)^* = \FF (u^\sharp)$ and $\widehat{(u^\sharp)} \in L^1(\R^d) \cap L^2(\R^d)$. Applying the convolution theorem again, we deduce that  $$
\| u \|_{L^{2m}}^{2m} \leq ( \FF{(u^\sharp)} \star \FF{(u^\sharp)} \star \ldots \star \FF {(u^\sharp)} \star \FF{(u^\sharp)})(0) = \FF{(|u^\sharp|^{2m})}(0)= \| u^\sharp\|_{L^{2m}}^{2m},$$
 whence assertion (ii) follows. The proof of Lemma \ref{lem:rearrange} is now complete. 
\end{proof}

\begin{prop}[Radial Symmetry  of Ground States.] Let $d \geq 1$, $0 < \sigma < \sigma_*$, and assume also that $\sigma \in \N$. Then there exists a ground state $Q \in H^2(\R^d)$ with $Q = Q^\sharp$. As a consequence of this, the following properties hold.

\begin{enumerate}
\item[(i)] $Q(x)$ is radially symmetric, real-valued, and continuous.
\item[(ii)] $Q(0) \geq |Q(x)|$ for all $x \in \R^d$.
\end{enumerate}
\end{prop}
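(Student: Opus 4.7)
The plan is to start from any ground state $Q \in H^2(\R^d)$ (whose existence is guaranteed by \cite{BeFrVi-14}) and to show that its Fourier rearrangement $Q^\sharp$ is again a ground state. Since $(Q^\sharp)^\sharp = Q^\sharp$ (the symmetric--decreasing rearrangement of an already symmetric--decreasing nonnegative function is itself), the ground state $Q^\sharp$ automatically satisfies $Q^\sharp = (Q^\sharp)^\sharp$ and serves as the object we want; properties (i) and (ii) will then be read off directly from the representation $\FF Q^\sharp = (\FF Q)^*$.

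The first technical point is to verify $\FF Q \in L^1(\R^d)$, so that Lemma \ref{lem:rearrange}(ii) applies with $m = \sigma + 1 \in \N$. Here the integer assumption enters crucially: since $\sigma \in \N$, the nonlinearity $|Q|^{2\sigma} Q = Q^{\sigma+1} \overline{Q}^{\,\sigma}$ is a polynomial in $Q$ and $\overline Q$, so a standard elliptic bootstrap applied to $Q = (\DD^2 + 1)^{-1}(|Q|^{2\sigma} Q)$ (using Sobolev embeddings together with the algebra property of $H^s \cap L^\infty$ for $s$ large enough) upgrades $Q \in H^2$ to $Q \in H^s(\R^d)$ for every $s \geq 0$. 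Choosing $s > d/2$ and writing $\FF Q = (1 + |\xi|^{2s})^{-1/2}(1+|\xi|^{2s})^{1/2} \FF Q$, Cauchy--Schwarz yields $\FF Q \in L^1$.

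With this regularity in hand, Lemma \ref{lem:rearrange} produces
$$
\|Q^\sharp\|_{L^2} = \|Q\|_{L^2}, \qquad \|\DD Q^\sharp\|_{L^2} \leq \|\DD Q\|_{L^2}, \qquad \|Q^\sharp\|_{L^{2\sigma+2}} \geq \|Q\|_{L^{2\sigma+2}},
$$
where the last inequality uses that $2\sigma + 2$ is an even integer. Therefore $\WW_{d,\sigma}[Q^\sharp] \geq \WW_{d,\sigma}[Q] = C_{d,\sigma}$, forcing equality throughout, so $Q^\sharp$ is itself a ground state. To conclude, observe that $\FF Q^\sharp = (\FF Q)^*$ is nonnegative, radial, and lies in $L^1(\R^d)$; therefore $Q^\sharp = \FF^{-1}[(\FF Q)^*]$ is continuous (dominated convergence), real-valued (inverse Fourier transform of a real function that is even in $\xi$), and radially symmetric (since the inverse Fourier transform commutes with rotations). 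Finally, the triangle inequality gives $|Q^\sharp(x)| \leq \int_{\R^d} (\FF Q)^*(\xi)\, d\xi = Q^\sharp(0)$, which is property (ii).

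The main obstacle is the regularity step $\FF Q \in L^1$, and this is exactly where the integer assumption $\sigma \in \N$ enters beyond its use in Lemma \ref{lem:rearrange}(ii): without it, the elliptic bootstrap stalls at a finite Sobolev level dictated by the H\"older regularity of $u \mapsto |u|^{2\sigma} u$, and it is not clear whether $\FF Q \in L^1$ can be recovered. A careful inspection shows that no other step uses $\sigma \in \N$, so any extension to non-integer $\sigma$ would have to overcome this single obstruction.
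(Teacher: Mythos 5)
Your proposal is correct and follows essentially the same route as the paper: bootstrap the elliptic equation (using $\sigma\in\N$) to get $Q\in H^{s}(\R^d)$ for all $s$, conclude $\FF Q\in L^1$, apply Lemma \ref{lem:rearrange} with $m=\sigma+1$ to see that $Q^\sharp$ is again a maximizer of the Weinstein functional, and then read off (i)--(ii) from $\FF Q^\sharp=(\FF Q)^*\geq 0$. The only (harmless) divergence is at the very end: the paper packages properties (i)--(ii) via Bochner's theorem and positive-definiteness of $Q^\sharp$, whereas you obtain them directly from the triangle inequality $|Q^\sharp(x)|\leq \int (\FF Q)^*\,d\xi=Q^\sharp(0)$ and elementary symmetry properties of the inverse Fourier transform — both arguments are valid and of comparable length.
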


\begin{proof}
Let $Q \in H^2(\R^d)$ be a ground state, i.\,e., a maximizer for problem \eqref{def:Wein}. We claim that its Fourier transform $\widehat{Q}:=\FF Q$ belongs to $L^1(\R^d)$. Without loss of generality we can assume that $Q$ solves \eqref{eq:Q}. By iterating the associated integral equation 
$Q = (\DD^2+1)^{-1} ( Q \overline{Q})^\sigma Q$ using Sobolev's inequalities and that $\sigma$ is an integer, we find that $Q \in H^k(\R^d)$ for all $k \in \N$. In particular, we can choose an integer $k > d/2$ to conclude $\| \widehat{Q} \|_{L^1} \leq \| \langle \xi \rangle^{-k} \|_{L^2} \| \langle \xi \rangle^{k} \widehat{Q} \|_{L^2} \leq C \| Q \|_{H^k} < \infty$.

Thus, we can apply Lemma \ref{lem:rearrange} to $Q$ with $m=\sigma+1 \in \N$ to deduce that $\WW_{d,\sigma}[Q^\sharp] \geq \WW_{d, \sigma}[Q]$ and hence $Q^\sharp \in H^2(\R^d)$ maximizes \eqref{def:Wein} too. Therefore, we can choose $Q=Q^\sharp$ to be a ground state for \eqref{def:Wein}. 

The rest of the proof follows from Bochner's theorem (see, e.\,g.,~\cite{ReSi-75}). Since $Q = Q^\sharp$ has a nonnegative Fourier transform $\FF(Q^\sharp)(\xi) \geq 0$ with $\FF(Q^\sharp) \in L^1(\R^d)$, we deduce that $Q : \R^d \to \C$ is a positive-definite function. That is, $Q$ is a bounded and continuous function with the following property: For every integer $m \geq 1$ and any points $x_1, \ldots, x_m \in \R^d$, the matrix $(Q(x_i-x_j))_{i,j =1}^{m}$ is positive semi-definite on $\C^m$, i.\,e.,
$$
\sum_{i,j=1}^m Q(x_i-x_j) \overline{\zeta_i} \zeta_j \geq 0 \quad \mbox{for all $\zeta \in \C^m$.}
$$   
If we take $m=1$ and $x_1=0$, we deduce that $Q(0)$ is real with $Q(0) \geq 0$. Moreover, by taking $m=2$ with $x_1 = 0$ and $x_2 = x$ with arbitrary $x \in \R^d$ (and considering the vectors $\zeta = ( Q(x), i Q(x)) \in \C^2$ and $\zeta = ( i Q(0), Q(x)) \in \C^2$), we conclude that
\be \label{eq:bochner}
Q(0)^2 \geq |Q(x)|^2 \quad \mbox{and} \quad Q(-x) = \overline{Q(x)}.
\ee
Next, since the Fourier transform $\mathcal{F}(Q)(\xi)$ is radially symmetric in $\xi$, we deduce that $Q(x)$ is radially symmetric in $x\in \R^d$. In view of the second equation in \eqref{eq:bochner}, this implies that $Q(x)$ must be real-valued.  This completes  the proof of Proposition A.2.
\end{proof}

\begin{remark*}
{\em The previous symmetry result also provides a simple existence proof for ground states for integer $\sigma \in \N$ and $d \geq 2$ as follows. Indeed, let $(u_n)_{n \geq 1} \subset H^2(\R^d)$ be a maximizing sequence for problem \eqref{def:Wein}, normalized such that $\| u_n \|_{L^2} = \| \DD u_n \|_{L^2}=1$ for all $n$. By density, we can assume that $u_n \in \mathcal{S}(\R^d)$ are Schwartz functions for all $n \geq 1$. From Lemma \ref{lem:rearrange} we have $\WW_{d,\sigma}[u_n^\sharp] \geq \WW_{d,\sigma}[u_n]$ and hence we can replace $u_n$ by $u_n^\sharp$. Without loss of generality, we can renormalize such that $\| u^\sharp_n \|_{L^2}= \| \DD u^{\sharp}_n \|_{L^2}=1$. Since $u^\sharp_n$ are radial functions uniformly bounded in $H^1(\R^d)$, an application of the Strauss inequality \eqref{ineq:Strauss} now yields a uniform spatial decay for this sequence and we easily deduce that the sequence $u^\sharp_n$ converges (up to subsequences) strongly in $H^2(\R^d)$ to a maximizer for problem \eqref{def:Wein}. }
\end{remark*}

\subsection{Energy-Critical Case} Let $d \geq 5$. We recall the Sobolev inequality
\be \label{ineq:sob2}
\| u \|_{L^{\frac{2d}{d-4}}} \leq C_d \| \DD u \|_{L^2}
\ee 
for all $u \in \dot{H}^2(\R^d)$, where $C_d > 0$ denotes the optimal constant. We recall the following result about existence and uniqueness of optimizers.

\begin{lemma}
For $d \geq 5$, we have equality in \eqref{ineq:sob2} if and only if $u(x) = \lambda W(\mu(x-x_0))$ for some $\lambda \in \C$, $\mu > 0$, and $x_0 \in \R^d$, where 
$$
W(x) = \left (  \frac{(d(d-4)(d^2-4))^{\frac{1}{4}}}{1 +x^2} \right )^{\frac{d-4}{2}} .
$$
\end{lemma}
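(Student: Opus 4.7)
The plan is to prove the result in three stages: existence of an optimizer, reduction to a positive solution of the Euler--Lagrange equation, and a conformal-geometric classification of those solutions.

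First, for existence I would apply P.-L.~Lions' concentration-compactness principle to a minimizing sequence $u_n \in \dot{H}^2(\R^d)$ for the quotient $\|\Delta u\|_{L^2}^2 / \|u\|_{L^{2d/(d-4)}}^2$. The only invariances of the functional are translation, phase rotation, and the scaling $u \mapsto \mu^{(d-4)/2} u(\mu \cdot)$; Lions' dichotomy, combined with the scale-criticality of the Sobolev embedding, shows that after rescaling and translating, $u_n$ admits a subsequence converging weakly in $\dot{H}^2(\R^d)$ to a nonzero limit that attains the optimal constant.

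Second, any optimizer $u$ satisfies, after a further rescaling that normalizes the Lagrange multiplier, the equation $\Delta^2 u = |u|^{8/(d-4)} u$ on $\R^d$. The complex phase $\lambda$ is absorbed by the invariances $u \mapsto e^{i\theta} u$ and $u \mapsto |u|$, so it suffices to classify real positive optimizers. Since Schwarz symmetrization is ill-behaved for $\Delta^2$, I would transport the problem to the round sphere $(S^d, g_0)$ via inverse stereographic projection: by the conformal covariance of the Paneitz operator $P_4$, the lifted function $v$ satisfies $P_4 v = c\, v^{(d+4)/(d-4)}$ on $S^d$ with $v > 0$, and conversely every such $v$ pulls back to an optimizer on $\R^d$.

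Third, the classification is then completed by invoking the Obata-type rigidity theorem for the Paneitz equation on $S^d$ (due to C.-S.~Lin, with later refinements by Wei--Xu), which asserts that any positive smooth solution of $P_4 v = c\, v^{(d+4)/(d-4)}$ is the constant solution pulled back by a conformal diffeomorphism of $S^d$. Translating the conformal group of $S^d$ through stereographic projection into transformations of $\R^d$ yields precisely the family $u(x) = \lambda W(\mu(x - x_0))$ with $\lambda \in \C$, $\mu > 0$, $x_0 \in \R^d$, and the explicit constant inside $W$ is fixed by requiring $W$ to solve $\Delta^2 W = W^{(d+4)/(d-4)}$ in $\R^d$.

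The main obstacle will be the third step. Unlike the second-order Sobolev inequality, where moving-plane arguments directly yield radial symmetry of positive solutions about some point in $\R^d$, the operator $\Delta^2$ admits no maximum principle on $\R^d$, so one is forced onto $S^d$. There the Obata-type integral identity for $P_4$ is what forces the conformally rescaled metric $v^{4/(d-4)} g_0$ to have constant $Q$-curvature and hence to be pulled back from $g_0$ by a M\"obius transformation; once this rigidity is in hand, the remainder is bookkeeping with conformal changes of coordinates.
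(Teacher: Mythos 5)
Your overall architecture (existence, Euler--Lagrange, classification of the resulting entire solutions) is a legitimate alternative to the paper's argument, which instead dualizes: setting $f=-\Delta u$, the inequality \eqref{ineq:sob2} becomes the Hardy--Littlewood--Sobolev bound $\|(-\Delta)^{-1}f\|_{L^{2d/(d-4)}}\leq C_d\|f\|_{L^2}$, whose cases of equality were completely classified by Lieb. However, your proposal has a genuine gap at exactly the point the duality trick is designed to avoid: the reduction to \emph{real positive} optimizers. You claim the phase is "absorbed by the invariances $u\mapsto e^{i\theta}u$ and $u\mapsto|u|$," but $u\mapsto|u|$ is not an admissible operation for the bi-Laplacian: there is no pointwise or integral inequality of the form $\|\Delta|u|\|_{L^2}\leq\|\Delta u\|_{L^2}$ (indeed, if a real $u\in\dot H^2$ changes sign transversally, $|u|$ has a distributional second derivative with a singular part and need not lie in $\dot H^2$ at all). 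So from the Euler--Lagrange equation $\Delta^2u=|u|^{8/(d-4)}u$ you cannot pass to a positive solution, and the rigidity theorems you invoke in Step 3 (Lin, Wei--Xu, or the integral-equation classification of Chen--Li--Ou cited in the paper) only classify \emph{nonnegative} solutions. This is precisely why the paper works on the dual side: the kernel of $(-\Delta)^{-1}$ is positive, so $|(-\Delta)^{-1}f|\leq(-\Delta)^{-1}|f|$ pointwise, optimizers of the HLS form can be taken nonnegative, and equality in this pointwise bound forces $f$ to have constant phase --- which is how the factor $\lambda\in\C$ in the statement is accounted for.

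Two secondary issues in Step 3: the Obata-type rigidity on $S^d$ requires that the conformally lifted function be an honest positive smooth solution across the north pole, which amounts to the a priori decay $u(x)\sim|x|^{4-d}$ at infinity; for a general $\dot H^2$ optimizer this must be \emph{proved} (via regularity lifting for the equivalent integral equation), not assumed. Moreover, the known proofs of the classification on $S^d$ (or equivalently on $\R^d$) proceed by moving planes or moving spheres on the integral equation rather than by a clean Obata identity for the Paneitz operator, since $\Delta^2$ lacks a maximum principle --- a difficulty you acknowledge but do not resolve. If you repair the positivity reduction (most easily by switching to the dual HLS formulation, as the paper does), the rest of your outline can be made to work.
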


\begin{proof}
If we let $f = (-\DD)^{-1} u$, we see that \eqref{ineq:sob2} is equivalent to the following instance of the weak Young (or Hardy-Littlewood-Sobolev) inequality $\| (-\DD)^{-1} f \|_{L^\frac{2d}{d-4}} \leq C_d \| f \|_{L^2}$.  Uniqueness of optimizers and the explicit form of $W(x)$ now follows from Lieb's result \cite{Lieb-83}; see also \cite{CaLo-90} for a different approach using the method of competing symmetries.
\end{proof}

A calculation shows that the optimizer $W(x)$ from above solves the equation
\be \label{eq:W2}
\DD^2 W - |W|^{\frac{8}{d-4}} W = 0 \quad \mbox{in $\R^d$}.
\ee 
Let us also mention the symmetry results in \cite{ChLiOu-06,Li-06}, where it is shown that any nonnegative solution of \eqref{eq:W2} in $L^{2d/(d-4)}_{\mathrm{loc}}(\R^d)$ equals $W(x)$ up to translation and rescaling.
 
Finally, we derive some Pohozaev identities for $W$ as follows. Integrating equation \eqref{eq:W2} against $W$ yields $\| \DD W \|_{L^2}^2 = \| W \|_{L^p}^p$ with $p=\frac{2d}{d-4}$. Since $W$ optimizes \eqref{ineq:sob2}, we also have $\| W \|_{L^p}^2 = C_d^2 \| \DD W \|_{L^2}^2$. Thus, we find the Pohozaev identities
\be \label{eq:Wpoho}
\| \DD W \|_{L^2}^2 = \left ( \frac{1}{C_d} \right )^{\frac{d}{2}} \quad \mbox{and} \quad E[W] = \left ( \frac{1}{2} - \frac{d-4}{2d} \right ) \| \DD W \|_{L^2}^2 = \frac{2}{d} \| \DD W \|_{L^2}^2.
\ee

\section{On the Choice of Cutoff Functions} \label{sec:cutoff}

Let $\phi : \R^d \to \R$ be a cutoff function as in Section \ref{sec:locvirial}. It is easy to see that we can choose $\phi(r) \geq 0$ to be nonnegative for all $r \geq 0$ with compact support such that $\phi(r) \equiv 0$ for $r \geq 10$. Furthermore, we can choose $\phi(r) \geq 0$ such that $\nabla^j \sqrt{\phi} \in L^\infty(\R^d)$ for $0 \leq j \leq 6$. Hence the additional properties \eqref{eq:phi_Riesz} for $\phi(r)$ used in Section \ref{sec:locrieszvar}  hold.

Let us now discuss that we can choose $\phi(r)$ with some further additional properties used in the proof of Theorem \ref{thm:mass} for $\mu=0$. In particular, we need to choose $\phi(r)$ such that inequality \eqref{ineq:trans} holds for $\eta_0 > 0$ sufficiently small, i.\,e., we have
\be \label{ineq:trans_A}
1- \pt_r^2 \phi_R(r) - \eta_0 \left \{ R^4 (A_R(r))^2 + (B_R(r))^{\frac{d}{2}} \right \} \geq  0 \quad \mbox{for $r \geq 0$}.
\ee
Recall that $A_R(r) = 4 \pt_r^2 \Delta \phi_R + 2 \DD^2 \phi_R(r)$ and $B_R(r) = \frac{8d}{4+d} \left ( d- \DD \phi_R(r) \right ) \geq 0$. Since $\phi_R= R^2 \phi(r/R)$, the claimed lower bound \eqref{ineq:trans_A} is equivalent to
\be \label{ineq:trans_A2}
1- \phi''(r) - \eta_0 \left \{ 4 \left (\DD^2 \phi(r) + 4 \pt_r^2 \Delta \phi(r) \right )^2 + \left ( \frac{8d}{4+d} \right )^{\frac{d}{2}} (d-\DD \phi(r))^{\frac{d}{2}} \right \} \geq 0
\ee
for $r \geq 0$. Let us now take $\phi(r) \geq 0$ such that
$$
\phi'(r) = \begin{dcases*} r & for $0 \leq r \leq 1$, \\ r - (r-1)^6 & for $1 < r \leq 1 + 1/\sqrt[5]{6}$, \\
\mbox{$\phi'(r)$ smooth with $\phi''(r) \leq 0$} & for $1+1/\sqrt[5]{6} < r \leq 10$, \\
0 & for $r \geq 10$. \end{dcases*}
$$
Because $1- \phi''(r) \equiv \DD^2 \phi(r) \equiv \pt_r^2 \Delta \phi(r) \equiv d-\DD \phi_R(r) \equiv 0$ for $0 \leq r \leq 1$, it remains to show that \eqref{ineq:trans_A2} holds for $r > 1$. Since we have
$$
1-\phi''(r) \geq 1 \quad \mbox{and} \quad | \DD^2 \phi(r)| + |\pt_r^2 \Delta \phi(r)| + |d-\DD \phi(r)| \leq C \quad \mbox{for $r \geq 1+1/\sqrt[5]{6}$},
$$
we can find $\eta_0> 0$ sufficiently small such that \eqref{ineq:trans_A2} is true for $r \geq 1 + 1/\sqrt[5]{6}$. In the region $1 < r < 1 + 1/\sqrt[5]{6}$, a computation yields $1- \phi''(r) = 6(r-1)^5$ and
$$
(\pt_r^2 \Delta \phi(r))^2 + (\DD^2 \phi(r))^2  \leq C (r-1)^6 , \quad |d-\DD \phi(r)|^{\frac{d}{2}} \leq C (r-1)^{5 \cdot \frac{d}{2}}.
$$ 
Since $d \geq 2$, we deduce that we can choose $\eta_0 > 0$ sufficiently small to ensure that \eqref{ineq:trans_A2} holds for $1 < r < 1 + 1/\sqrt[5]{6}$ as well. 

Finally, with the choice of $\phi(r)$ above, we have that  $B_R(r) = B(r/R)$, where
$$
B(r) = \begin{dcases*} 0 & for $0 \leq r \leq 1$, \\ 
\frac{8d}{4+d} (r-1)^5 \left ( 6 + \frac{(d-1)(r-1)}{r} \right ) & for $1 < r \leq 1+ 1/\sqrt[5]{6}$, \\
\end{dcases*} 
$$
and $B(r)$ is smooth for $r > 1+ 1/\sqrt[5]{6}$ with $B(r) \equiv \mbox{const}.$ for $r \geq 10$. Since $d \geq 2$, we deduce the bounds
$\| \partial_r^j B_R^{\frac{d}{4}} \|_{L^\infty} \lesssim R^{-j}$ for $j=1,2$ and $\| \partial_r B_R^{\frac{d}{8}} \|_{L^\infty} \lesssim R^{-1}$.
\end{appendix}

\bibliographystyle{amsplain}
\bibliography{BNLSBib}

\end{document}